\newtheorem{remark}{Remark}[section] 
\title{Fast and High-order Accuracy Numerical Methods for  Time-Dependent Nonlocal Problems in $\mathbb{R}^2$
\thanks{This work was supported by NSFC 11601206, 11471150 and the Fundamental Research Funds for the Central Universities under Grant No. lzujbky-2019-80. Research
supported in part by the HKRGC GRF 12306616, 12200317, 12300218 and 12300519, and
HKU Grant 104005583.}}
\author{Rongjun Cao\thanks{ School of Mathematics and Statistics, Gansu Key Laboratory of Applied Mathematics and Complex Systems,
 Lanzhou University, Lanzhou 730000, P.R. China  (Email: caorj18@lzu.edu.cn)}
\and Minghua Chen\thanks{
Corresponding author. School of Mathematics and Statistics, Gansu Key Laboratory of Applied Mathematics and Complex Systems,
 Lanzhou University, Lanzhou 730000, P.R. China  (Email: chenmh@lzu.edu.cn) }
 \and Michael K. Ng\thanks{
Department of Mathematics, The University of Hong Kong, Pokfulam, Hong Kong  (Email: mng@maths.hku.hk)}
  \and Yu-Jiang Wu\thanks{
School of Mathematics and Statistics, Gansu Key Laboratory of Applied Mathematics and Complex Systems, Lanzhou University, Lanzhou 730000, P.R. China  (Email: myjaw@lzu.edu.cn)}
 }
\begin{document}

\maketitle

\begin{abstract}
In this paper, we study the Crank-Nicolson method for temporal dimension
and the piecewise quadratic polynomial collocation method for spatial dimensions
of time-dependent nonlocal problems. The new theoretical results of such discretization
are that the proposed numerical method is
unconditionally stable and its global truncation error is of
$\mathcal{O}\left(\tau^2+h^{4-\gamma}\right)$ with $0<\gamma<1$, where $\tau$ and $h$ are
the discretization sizes in the temporal and spatial dimensions respectively.
Also we develop the conjugate gradient squared method to solving
the resulting discretized nonsymmetric and indefinite systems arising from time-dependent
nonlocal problems including  two-dimensional cases. By using additive and multiplicative Cauchy kernels in non-local problems,
structured coefficient matrix-vector multiplication can be performed efficiently in the conjugate gradient squared
iteration. Numerical examples are given to illustrate our theoretical results and demonstrate
that the computational cost of the proposed method is of $O(M \log M)$ operations where $M$
is the number of collocation points.
\end{abstract}

\begin{keywords}
Two-dimensional time-dependent nonlocal problems, nonsymmetric indefinite systems, rectangular matrices, conjugate gradient squares method,
stability and convergence analysis
\end{keywords}

\begin{AMS}
45F15, 65L60, 65M12
\end{AMS}

\pagestyle{myheadings}
\thispagestyle{plain}
\markboth{R. J. CAO, M. H. CHEN, M. K. NG AND Y. J. WU}{NUMERICAL METHODS FOR
TIME-DEPENDENT NONLOCAL PROBLEMS}

\section{Introduction}\label{sec:1}

In this paper, we study an error estimate and develop fast conjugate gradient squares method of the piecewise quadratic polynomial
collocation (PQC) for time-dependent nonlocal problems, whose  prototype
is  \cite{Andreu:10,Bates:06,Du:12,Silling:00}
\begin{gather*}
u_t(x,t)+\int_\Omega J(|x-y|)\left[ u(x,t)-u(y,t) \right]dy  =f(x,t),   \quad (x,t) \in \Omega \times(0,T]. \tag{$*$}
\end{gather*}
Here $J(x)$ is a radial probability density with a nonnegative  symmetric dispersal kernel,
with nonhomogeneous  Dirichlet  boundary conditions  and initial condition $u(x,0)=u_0(x)$.
There are a lot of scientific phenomena that can be described by model ($*$) in various applications, for example, in
materials science, biology, particle systems, image processing, coagulation models, mathematical finance, see \cite{Andreu:10,Bates:06}
for detailed discussion.
The well-posedness (existence and uniqueness) of the model ($*$) can be found in the monograph \cite[p.\,46]{Andreu:10}.
We notice that there are many different choices to prescribe $J(x)$ for nonlocal problems ($*$), e.g., the constant kernel, fractional Laplacian kernel
or commonly used kernel \cite{Andreu:10,CD:17,Du:12,WT:12,ZhangWJ:18}
\begin{equation*}
  J(x)\sim\frac{1}{x^{1+2s}},~~s\in [-0.5,1).
\end{equation*}
In this paper,  we mainly focus on the case $s\in (-0.5,0)$, the other cases  can be similarly studied.
Then the  nonlocal model ($*$)  reduces to the following nonlocal diffusion problem
\begin{equation}\label{1.1}
\frac{\partial u(x,t)}{\partial t}+\int^b_a \frac{u(x)-u(y)}{|x-y|^\gamma}dy  =f(x,t),   \quad (x,t) \in (a,b)\times(0,T],
\quad 0< \gamma <1.
\end{equation}

To seek the numerical solution of time-dependent nonlocal problems, or specifically \eqref{1.1}, we employ the piecewise quadratic polynomial collocation method to approximate the following
weakly singular integral
\begin{equation*}
 I(a,b,x) =\int^b_a \frac{u(y)}{|x-y|^\gamma}dy,  \quad x \in (a,b), \quad 0< \gamma <1,
\end{equation*}
in the discretization for non-local problems in (1.1).
Note that the local truncation error with $\mathcal{O}\left(h^3\right)$ convergence was established in \cite{Aikinson:67}, where
$h$ is the discretization size in the spatial dimension.
The quasi-optimal error estimate with $\mathcal{O}\left(h^{4-\gamma}\right)$ convergence was provided in \cite{Hoog:73} or \cite[p.\,125]{Aikinson:09}.
By using the techniques of hypersingular integral \cite{GFTJZ:18,LiSun:10,WL:05},
researchers provided  an optimal error
$\mathcal{O}\left(h^4\eta_i^{-\gamma}\right)$, $\eta_i=\min\left\{x_i-a,b-x_i\right\}$ for the weakly singular integral \cite{CQSW:19}.
Numerical methods for the steady-state version of \eqref{1.1} have been proposed and studied in the literature.
For example, the second-order convergence results are provided in
\cite{CES:19,WT:12} by using the finite element method with piecewise linear polynomial basis.
Recently, numerical results for the steady-state version of \eqref{1.1} with $\gamma=1$
was shown that the convergence rate is close to 1.5 by the PLC method \cite{TWW:13}.
There is still no theoretical convergence results for the PLC method.
In \cite{CQSW:19}, Chen et al. showed the optimal first order and third-order convergence rates
for the PLC and the PQC methods respectively.
To the best of our knowledge, there is only a few study for time-dependent nonlocal problems.
In \cite{DHZZ:18}, Du et al. studied
the two-dimensional nonlocal wave equation on unbounded domains and its numerical solution based on quadrature scheme.

The main aim of this paper is to study the Crank-Nicolson method for temporal dimension
and the piecewise quadratic polynomial collocation method for spatial dimensions
of time-dependent nonlocal problems in (1.1).
The new theoretical results of such discretization are that the proposed numerical method is
unconditionally stable and its global truncation error is of
$\mathcal{O}\left(\tau^2+h^{4-\gamma}\right)$ with $0<\gamma<1$, where $\tau$ and $h$ are
the discretization sizes in the temporal and spatial dimensions respectively.
Also we employ the conjugate gradient squared method \cite{Saad:03,Sonneveld:89}
to solving the resulting discretized nonsymmetric and indefinite systems arising from time-dependent
nonlocal problems. By using additive and multiplicative Cauchy kernels in non-local problems,
structured coefficient matrix-vector multiplication \cite{Chen:0013,PNW:16,Pang:12,Wang:12,ChanNg:96,DuW:15}
can be performed efficiently in the conjugate gradient squared
iteration. Numerical examples are given to illustrate our theoretical results and demonstrate
that the computational cost of the proposed method is of $O( M \log M)$ operations where $M$
is the number of collocation points.

The paper is organized as follows. In Section 2, we provide the high-order scheme with the collocation method for
solving nonlocal problems. In Section 3, the
superconvergence rate with the Crank-Nicolson scheme is studied.
In Section 4, we develop conjugate gradient squared method
to solving the discretized linear system and discuss the computational cost and the storage requirement.
In Section 5, experimental results are given to illustrate the effectiveness of the proposed numerical method.
Finally, some concluding remarks are given in Section 6.

\section{Discretization Schemes}
In this section, we discuss about the discretization schemes of the nonlocal problems including two-dimensional cases.
\subsection{One-dimensional Discretization}
Review the piecewise quadratic polynomial collocation method and apply it to the following
steady-state version of \eqref{1.1}:
\begin{equation}\label{2.1}
 \int^b_a \frac{u(x)-u(y)}{|x-y|^\gamma}dy  =f(x),   \quad 0< \gamma <1.
\end{equation}
Let the mesh points $a=x_{0}<x_{\frac{1}{2}}<x_{1}< \cdots <x_{\frac{2M-1}{2}}<x_{M}=b$ be
a partition with the uniform spatial stepsize $h=(b-a)/M$
and $0=t_{0}<t_1< \cdots <t_{N}=T$ with the time stepsize $\tau=T/N$.
Denote $u_{i}^k$ as the approximated value of $u(x_i,t_k)$ and $f_i^{k+1/2}=f(x_i,t_{k+1/2})$ with $t_{k+1/2}=\frac{t_k+t_{k+1}}{2}$.

Let the piecewise quadratic basis function $\phi_{i}(y)$ or $\phi_{i+\frac{1}{2}}(y)$ be given in \cite[p.\,499]{Aikinson:09}.
Then the piecewise Lagrange quadratic interpolant of $u(y)$ is
$$
u_{Q}(y)=\sum^{M}_{i=0}u(x_i)\phi_i(y)+\sum^{M-1}_{i=0}u\left(x_{i+\frac{1}{2}}\right)\phi_{i+\frac{1}{2}}(y).
$$
According to (2.9) from \cite{CQSW:19}, we can rewrite \eqref{2.1} as follows:
\begin{equation}\label{2.2}
\begin{split}
\int^{b}_{a} \frac{ u\left(x_\frac{i}{2}\right)-u_{Q}(y)}{\left| x_{\frac{i}{2}} - y \right|^{\gamma}} dy
= f\left(x_\frac{i}{2}\right)+R_\frac{i}{2}, \quad 1\leq i\leq 2M-1,
\end{split}
\end{equation}
with $R_\frac{i}{2}=\mathcal{O}\left(h^4\left(\eta_\frac{i}{2}\right)^{-\gamma}\right)$ and  $\eta_\frac{i}{2}=\min\left\{x_{\frac{i}{2}}-a,b-x_{\frac{i}{2}}\right\}$ for $i=1,2,\cdots,2M-1$.
Thus the discretization scheme of \eqref{2.2} is given by the following system:
\begin{equation*}
\begin{split}
&\left[d_{i} u_{i}-\sum^{M-1}_{j=1}m_{|i-j|}u_j-\sum^{M-1}_{j=0}q_{|i-j-\frac{1}{2}|-\frac{1}{2}}u_{j+\frac{1}{2}}\right]
=f_{i}+\left(\beta_iu_0+\beta_{M-i}u_M\right),\\
&{\rm with}\ i=1,2,\cdots,M-1,\, {\rm and}\\
&\left[d_{i+\frac{1}{2}} u_{i+\frac{1}{2}}-\sum^{M-1}_{j=1}p_{|i+\frac{1}{2}-j|-\frac{1}{2}}u_j-\sum^{M-1}_{j=0}n_{|i-j|}u_{j+\frac{1}{2}}\right]
=f_{i+\frac{1}{2}}+\left(\gamma_iu_0+\gamma_{M-i-1}u_M\right),\\
&{\rm with}\ i=0,1,2,\cdots,M-1.
\end{split}
\end{equation*}
Here, the coefficients  are given in (2.10) of \cite{CQSW:19}. For simplicity, we set
$\eta_{h,\gamma}$ to be $\frac{h^{1-\gamma}}{(3-\gamma)(2-\gamma)(1-\gamma)}$ and
explicitly compute $m_0=2(1+\gamma){\eta_{h,\gamma}}$, for $ k\geq 1$,
$$
m_k=4{\eta_{h,\gamma}}\left[ (k+1)^{3-\gamma}\!-\!(k-1)^{3-\gamma} \right]
\!-\!{\eta_{h,\gamma}}(3-\gamma)\left[ (k+1)^{2-\gamma}+6k^{2-\gamma}+(k-1)^{2-\gamma}  \right];
$$
and
$p_0=4{\eta_{h,\gamma}}\left[\left(\frac{3}{2}\right)^{3-\gamma}-\left(\frac{1}{2}\right)^{3-\gamma}\right]
-{\eta_{h,\gamma}}(3-\gamma)\left[\left(\frac{3}{2}\right)^{2-\gamma}+3\left(\frac{1}{2}\right)^{2-\gamma}\right]$,
for $k \geq 1$, $p_{k}=m_{k+\frac{1}{2}}$.
Moreover $n_{0}={\eta_{h,\gamma}}{ (2-\gamma) 2^{\gamma+1}}$, $n_{k}=q_{k-\frac{1}{2}}$ for $k\geq 1$, and there exists $$q_{k}=-8{\eta_{h,\gamma}}\left((k+1)^{3-\gamma}-k^{3-\gamma}\right)+4{\eta_{h,\gamma}}(3-\gamma)\left((k+1)^{2-\gamma}+k^{2-\gamma}\right),\, k \ge 0.$$
The boundary coefficients for $1\ \leq i \leq M-1$ are given by
$$\beta_i
= 4{\eta_{h,\gamma}}\left[ i^{3-\gamma} \!- \!(i-1)^{3-\gamma}  \right]
-{\eta_{h,\gamma}}(3-\gamma) \left[ 3i^{2-\gamma} + \left(i-1\right)^{2-\gamma} - (2-\gamma)i^{1-\gamma}\right],$$
$\gamma_{i}=\beta_{i+\frac{1}{2}}$ and $\gamma_{0}={\eta_{h,\gamma}}{{(2-\gamma)(1-\gamma)} 2^{\gamma-1}}$.

Using the matrix form of the grid functions
$$
U=\left(u_{1},u_{2},\cdots,u_{M-1},u_{\frac{1}{2}},u_{\frac{3}{2}},\cdots,u_{M-\frac{1}{2}}\right)^{T}
$$
and similarly for $F$. We can rewrite the above discretization scheme as follows:
\begin{equation}\label{2.3}
\begin{split}
\mathcal{A}U=F+K~~ {\rm with}~~
\mathcal{A}=\left ( \begin{matrix}
 \mathcal{D}_{1}  & 0\\
 0      &\mathcal{D}_{2}
 \end{matrix}
 \right )
-
\left ( \begin{matrix}
  \mathcal{M}     & \mathcal{Q }  \\
 \mathcal{ P }        & \mathcal{N}
 \end{matrix}
 \right )=: {\cal D} - {\cal G},
\end{split}
\end{equation}
where the boundary data $K$ is given by
\begin{equation*}
\begin{split}
  K&=\left(\eta_{1},\eta_{2},\cdots,\eta_{M-1},\eta_\frac{1}{2},\eta_\frac{3}{2},\cdots,\eta_{M-\frac{1}{2}}\right)^{T}u_0\\
&\quad+\left(\eta_{M-1},\eta_{M-2},\cdots,\eta_{1},\eta_{M-\frac{1}{2}},\eta_{M-\frac{3}{2}},\cdots,\eta_{\frac{1}{2}}\right)^{T}u_M.
\end{split}
\end{equation*}
Moreover, $\mathcal{D}_1={\rm diag}\left(d_1,d_2,\ldots, d_{M-1}\right)$, $\mathcal{D}_2={\rm diag}\left(d_\frac{1}{2},d_\frac{3}{2},\ldots, d_{M-\frac{1}{2}}\right)$,
 and $\mathcal{M}={\rm toeplitz}\left(m_0,m_1,\ldots, m_{M-2}\right)$, $\mathcal{N}={\rm toeplitz}\left(n_0,n_1,\ldots, n_{M-1}\right)$.
The {\em rectangular matrices} $\mathcal{P}$, $\mathcal{Q}$ are defined by
\begin{equation*}
\begin{split}
\mathcal{P}=\left [ \begin{matrix}
p_{0}              & p_{1}              & p_{2}             &     \cdots & p_{M-3}   & p_{M-2}        \\
p_{0}              & p_{0}              & p_{1}             &     \ddots & \ddots   & p_{M-3}         \\
p_{1}              & p_{0}              & p_{0}             &     \ddots & \ddots      & \vdots          \\
\vdots             & \ddots              &  \ddots           &     \ddots & \ddots      & p_{2}            \\
p_{M-4}            &\ddots           & \ddots         &     \ddots & p_{0}       & p_{1}              \\
p_{M-3}          & p_{M-4}           &\ddots         &      p_{1} & p_{0}       & p_{0}               \\
p_{M-2}          & p_{M-3}           & p_{M-4}        &     \cdots & p_{1}       & p_{0}
 \end{matrix}
 \right ]_{M \times (M-1)}
\end{split},
\end{equation*}
and
\begin{equation*}
\begin{split}
\mathcal{Q}=\left [ \begin{matrix}
q_{0}              & q_{0}              & q_{1}             &     \cdots & q_{M-4}   & q_{M-3}   & q_{M-2}      \\
q_{1}              & q_{0}              & q_{0}             &     \ddots & \ddots   & q_{M-4}   & q_{M-3}       \\
q_{2}              & q_{1}              & q_{0}             &     \ddots & \ddots   & \ddots   & q_{M-4}        \\
\vdots             & \ddots             &  \ddots           &     \ddots & \ddots    &q_{1}     & \vdots           \\
q_{M-3}            & \ddots            & \ddots           &     \ddots & q_{0}     & q_{0}     &  q_{1}            \\
q_{M-2}            & q_{M-3}            & \cdots         &     q_{2} & q_{1}     & q_{0}     &  q_{0}
 \end{matrix}
 \right ]_{(M-1) \times M}
\end{split}.
\end{equation*}

Hence, the full discretization of time-dependent non-local problems in (1.1) with Crank-Nicolson scheme is given by
\begin{equation}\label{2.4}
\left(I+\frac{\tau}{2}\mathcal {A}\right)U^{k}=\left(I-\frac{\tau}{2}\mathcal {A}\right)U^{k-1}+\tau F^{k-\frac{1}{2}}+\tau K^{k-\frac{1}{2}},\, k=1,2,\cdots,N,
\end{equation}
with $U^k=\left(u^k_{1},u^k_{2},\cdots,u^k_{M-1},u^k_{\frac{1}{2}},u^k_{\frac{3}{2}},\cdots,u^k_{M-\frac{1}{2}}\right)^{T}$.
Note that the local truncation error is of $\mathcal {O}\left(\tau^2+h^4\left(\eta_\frac{i}{2}\right)^{-\gamma}\right)$ with
$\eta_\frac{i}{2}=\min\left\{x_{\frac{i}{2}}-a,b-x_{\frac{i}{2}}\right\}$.

\subsection{Two-dimensional Nonlocal Problems with Multiplicative Cauchy Kernel}\label{subsection 2.2}
As one of the two-dimensional nonlocal problems, we consider the following nonlocal problem with multiplicative Cauchy kernel:
\begin{equation}\label{2.5}
 \int_\Omega \frac{u(x,y)-u(\bar{x},\bar{y})}{|x-\bar{x}|^\gamma|y-\bar{y}|^\gamma}d\bar{x}d\bar{y}  =f(x,y),  \quad 0 < \gamma <1,
\end{equation}
with $\Omega=\left(a,b\right)\times \left(c,d\right)$. Taking the mesh points $a=x_{0}<x_{\frac{1}{2}}<x_{1}< \cdots <x_{\frac{2M_x-1}{2}}<x_{M_x}=b$ and $c=y_{0}<y_{\frac{1}{2}}<y_{1}< \cdots <y_{\frac{2M_y-1}{2}}<y_{M_y}=d$ as a partition with the uniform  space stepsize $h_x=(b-a)/M_x$ in $x$ direction, and $h_y=(d-c)/M_y$ in $y$ direction, and $t_{k}=k\tau$ with the time stepsize $\tau=T/N$. Denote  $u_{i,j}^k$ as the approximated value of $u\left(x_i,y_j,t_k\right)$ and $f_{i,j}^{k+1/2}=f\left(x_i,y_j,t_{k+1/2}\right)$ with $t_{k+1/2}=\frac{t_k+t_{k+1}}{2}$.
From \cite[p.\,499]{Aikinson:09}, we known that the piecewise quadratic basis function $\phi_{l}(x)$ or $\phi_{l-\frac{1}{2}}(x)$ are defined by
\begin{equation}\label{2.6}
\phi_l(x)=\left\{\begin{split}
\frac{x-x_{l-1}}{h_x}\frac{2x-(x_l+x_{l-1})}{h_x}:=\phi_l^{-}(x),\quad x&\in\left[x_{l-1},x_l\right],\\
\frac{x_{l+1}-x}{h_x}\frac{(x_{l+1}+x_l)-2x}{h_x}:=\phi_l^{+}(x),\quad x&\in\left[x_l,x_{l+1}\right],\\
0,\quad\qquad\qquad\qquad\qquad\qquad &{\rm otherwise}
\end{split}\right.
\end{equation}
with $l=1,2,\cdots,M_x-1$ and
\begin{equation}\label{2.7}
\phi_{l-\frac{1}{2}}(x)=\left\{\begin{split}
\frac{4(x-x_{l-1})(x_l-x)}{{h_x}^2},\qquad\ &x\in\left[x_{l-1},x_l\right],\\
0,\quad\qquad\qquad\qquad\qquad & {\rm otherwise}
\end{split}\right.
\end{equation}
with $l=1,2,\cdots,M_x$. The piecewise Lagrange quadratic interpolation of  $u\left(\bar{x},\bar{y}\right)$ is
\begin{equation}\label{2.8}
u_{Q}(\bar{x},\bar{y})=\sum^{2M_x}_{l=0}\sum^{2M_y}_{r=0}\phi_{\frac{l}{2},\frac{r}{2}}(\bar{x},\bar{y})u\left(x_{\frac{l}{2}},y_{\frac{r}{2}}\right)
=\sum^{2M_x}_{l=0}\phi_{\frac{l}{2}}(\bar{x})\sum^{2M_y}_{r=0}\phi_{\frac{r}{2}}(\bar{y})u\left(x_{\frac{l}{2}},y_{\frac{r}{2}}\right).
\end{equation}
Hence, for  $1\leq i\leq 2M_x-1$, $1\leq j\leq 2M_y-1$, we can rewrite \eqref{2.5} as
 \begin{equation}\label{2.9}
 \int_\Omega \frac{u\left(x_\frac{i}{2},y_\frac{j}{2}\right)-u_{Q}\left(\bar{x},\bar{y}\right)}{|x_\frac{i}{2}-\bar{x}|^\gamma|y_\frac{j}{2}-\bar{y}|^\gamma}d\bar{x}d\bar{y}  =f\left(x_\frac{i}{2},y_\frac{j}{2}\right)+R_{\frac{i}{2},\frac{j}{2}},
\end{equation}
where  the error estimation $R_{\frac{i}{2},\frac{j}{2}}$ will be proved in Lemma \ref{lemma3.9}. Thus the discretization scheme of \eqref{2.9} can be expressed as
\begin{equation*}
\begin{split}
&\int^b_a\frac{1}{|x_\frac{i}{2}-\bar{x}|^\gamma}d\bar{x}
\int^d_c\frac{1}{|y_\frac{j}{2}-\bar{y}|^\gamma}d\bar{y}\ u_{\frac{i}{2},\frac{j}{2}}\\
&\quad-\sum^{2M_x-1}_{l=1}\int^b_a \frac{\phi_{\frac{l}{2}}(\bar{x})}{|x_\frac{i}{2}-\bar{x}|^\gamma}d\bar{x}
\sum^{2M_y-1}_{r=1}\int^d_c\frac{\phi_{\frac{r}{2}}(\bar{y})}{|y_\frac{j}{2}-\bar{y}|^\gamma}d\bar{y}\ u_{\frac{l}{2},\frac{r}{2}} =f_{\frac{i}{2},\frac{j}{2}}+k_{\frac{i}{2},\frac{j}{2}},
\end{split}
\end{equation*}
and   the boundary data $k_{\frac{i}{2},\frac{j}{2}}$ is
\begin{equation*}
\begin{split}
&\int^b_a \!\frac{\phi_{0}(\bar{x})}{|x_\frac{i}{2}-\bar{x}|^\gamma}d\bar{x}
\sum^{2M_y}_{r=0}\!\int^d_c\!\frac{\phi_{\frac{r}{2}}(\bar{y})}{|y_\frac{j}{2}-\bar{y}|^\gamma}d\bar{y}\ u_{0,\frac{r}{2}}
\!+\!\int^b_a \!\frac{\phi_{M_x}(\bar{x})}{|x_\frac{i}{2}-\bar{x}|^\gamma}d\bar{x}
\sum^{2M_y}_{r=0}\!\int^d_c\!\frac{\phi_{\frac{r}{2}}(\bar{y})}{|y_\frac{j}{2}-\bar{y}|^\gamma}d\bar{y}\ u_{M_x,\frac{r}{2}}\\
&\!+\!\!\!\sum^{2M_x-1}_{l=1}\!\!\int^b_a\!\! \frac{\phi_{\frac{l}{2}}(\bar{x})}{|x_\frac{i}{2}-\bar{x}|^\gamma}d\bar{x}
\!\!\int^d_c\!\!\frac{\phi_{0}(\bar{y})}{|y_\frac{j}{2}-\bar{y}|^\gamma}d\bar{y}\ u_{\frac{l}{2},0}
\!+\!\!\!\sum^{2M_x-1}_{l=1}\!\!\int^b_a\! \!\frac{\phi_{\frac{l}{2}}(\bar{x})}{|x_\frac{i}{2}-\bar{x}|^\gamma}d\bar{x}
\!\!\int^d_c\!\!\frac{\phi_{M_y}(\bar{y})}{|y_\frac{j}{2}-\bar{y}|^\gamma}d\bar{y}\ u_{\frac{l}{2},M_y}.
\end{split}
\end{equation*}

For convenience of implementation, we define  following grid functions
\begin{equation}\label{2.10}
\begin{split}
{U}&=\left( U_1,U_2,\cdots,U_{M_x-1},U_{\frac{1}{2}},U_{\frac{3}{2}},\cdots,U_{M_x\!-\frac{1}{2}} \right)^T,\\
U_i&=\left(u_{i,1},u_{i,2},\ldots,u_{i,M_y-1},u_{i,\frac{1}{2}},u_{i,\frac{3}{2}},\ldots,u_{i, M_y-\frac{1}{2}}\right),
\end{split}
\end{equation}
with $i=1,2,\ldots, M_x-1,\textstyle\frac{1}{2},\frac{3}{2},\cdots,M_x-\frac{1}{2}$,
and similarly denote ${F}$, ${K}$.
Then  we can obtain the resulting system  of \eqref{2.9}
\begin{equation}\label{2.11}
 {\mathcal{A}}{U}={F}+{K}~~{\rm with}~~
 {\mathcal{A}}=\mathcal{D}_x\otimes \mathcal{D}_y - \mathcal{G}_x\otimes \mathcal{G}_y,
\end{equation}
in which  $\mathcal{D}_x$, $\mathcal{D}_y$,  $\mathcal{G}_x$, $\mathcal{G}_y$  of the same form as the matrix $\mathcal{D}$ and $\mathcal{G}$ in  \eqref{2.3}.

Consider the following  two-dimensional  time-dependent nonlocal problem
\begin{equation}\label{2.12}
\frac{\partial u(x,y,t)}{\partial t}+ \int_\Omega \frac{u(x,y,t)-u(\bar{x},\bar{y},t)}{|x-\bar{x}|^\gamma|y-\bar{y}|^\gamma}d\bar{x}d\bar{y}=f(x,y,t),\ (x,y,t)\in\Omega\times\left(0,T\right],
\end{equation}
with the nonhomogeneous   Dirichlet  boundary conditions  and the initial condition.

Using the grid functions
\begin{equation*}
\begin{split}
{U}^k
&=\left( U^k_1,U^k_2,\cdots,U^k_{M_x-1},U^k_{\frac{1}{2}},U^k_{\frac{3}{2}},\cdots,U^k_{M_x\!-\frac{1}{2}} \right)^T,\\
U^k_i
&=\left(u^k_{i,1},u^k_{i,2},\ldots,u^k_{i,M_y-1},u^k_{i,\frac{1}{2}},u^k_{i,\frac{3}{2}},\ldots,u^k_{i, M_y-\frac{1}{2}}\right),
\end{split}
\end{equation*}
with $ i=1,2,\ldots, M_x-1,\textstyle\frac{1}{2},\frac{3}{2},\cdots,M_x-\frac{1}{2}$ for $k=1,2,\cdots,N$.  Similarly, we can define the vectors  ${F}^k$ and ${K}^k$.
Then the full discretization of time-dependent nonlocal problems \eqref{2.12} is
\begin{equation}\label{2.13}
\left(I+\frac{\tau}{2}{\mathcal{A}}\right){U}^{k}=\left(I-\frac{\tau}{2}{\mathcal{A}}\right){U}^{k-1}+\tau {F}^{k-\frac{1}{2}}+\tau {K}^{k-\frac{1}{2}}.
\end{equation}

\subsection{Numerical scheme for 2D  nonlocal problems with additive Cauchy kernels}\label{subsection 2.3}
Consider the following  two-dimensional steady-state nonlocal problem
\begin{equation}\label{2.14}
 \int_\Omega \frac{u(x,y)-u(\bar{x},\bar{y})}{\left|\sqrt{\left(x-\bar{x}\right)^2+\left(y-\bar{y}\right)^2}\right|^{\gamma}}d\bar{x}d\bar{y}  =f(x,y),   \quad 0< \gamma <1.
\end{equation}
From \eqref{2.8}, for $1\leq i\leq 2M_x-1$,  $1\leq j \leq 2M_y-1$,  we can rewrite \eqref{2.14} as
 \begin{equation}\label{2.15}
 \int_\Omega \frac{u\left(x_\frac{i}{2},y_\frac{j}{2}\right)-u_{Q}\left(\bar{x},\bar{y}\right)}{\left|\sqrt{\left(x_\frac{i}{2}-\bar{x}\right)^2+\left(y_\frac{j}{2}-\bar{y}\right)^2}\right|^{\gamma}}
 d\bar{x}d\bar{y}  =f\left(x_\frac{i}{2},y_\frac{j}{2}\right)+R_{\frac{i}{2},\frac{j}{2}},
\end{equation}
where  the error estimation $R_{\frac{i}{2},\frac{j}{2}}$ will be proved in Lemma \ref{lemma3.12}.
Then the discretization scheme of \eqref{2.15} can be expressed by
\begin{equation}\label{2.16}
\begin{split}
&\int_\Omega\frac{1}{\left|\sqrt{\left(x_\frac{i}{2}-\bar{x}\right)^2+\left(y_\frac{j}{2}-\bar{y}\right)^2}\right|^{\gamma}} d\bar{x}d\bar{y} \ u_{\frac{i}{2},\frac{j}{2}}\\
&\quad-\sum^{2M_x-1}_{l=1}\sum^{2M_y-1}_{r=1}\int_\Omega\frac{\phi_{\frac{l}{2},\frac{r}{2}}(\bar{x},\bar{y}) }{\left|\sqrt{\left(x_\frac{i}{2}-\bar{x}\right)^2+\left(y_\frac{j}{2}-\bar{y}\right)^2}\right|^{\gamma}}d\bar{x}d\bar{y} \ u_{\frac{l}{2},\frac{r}{2}} =f_{\frac{i}{2},\frac{j}{2}}+k_{\frac{i}{2},\frac{j}{2}},
\end{split}
\end{equation}
 and boundary data $k_{\frac{i}{2},\frac{j}{2}}$ is
\small\begin{equation*}
\begin{split}
&\sum^{2M_y}_{r=0}\int_\Omega\frac{\phi_{0,\frac{r}{2}}(\bar{x},\bar{y})~ d\bar{x}d\bar{y}}
{\left|\sqrt{\left(x_\frac{i}{2}-\bar{x}\right)^2+\left(y_\frac{j}{2}-\bar{y}\right)^2}\right|^{\gamma}}\ u_{0,\frac{r}{2}}
+\sum^{2M_y}_{r=0}\int_\Omega\frac{\phi_{M_x,\frac{r}{2}}(\bar{x},\bar{y})~ d\bar{x}d\bar{y}}
{\left|\sqrt{\left(x_\frac{i}{2}-\bar{x}\right)^2+\left(y_\frac{j}{2}-\bar{y}\right)^2}\right|^{\gamma}}\ u_{M_x,\frac{r}{2}}\\
&\!\!+\!\!\sum^{2M_x-1}_{l=1}\!\!\!\int_\Omega\frac{\phi_{\frac{l}{2},0}(\bar{x},\bar{y})~d\bar{x}d\bar{y}}
{\left|\sqrt{\left(x_\frac{i}{2}-\bar{x}\right)^2+\left(y_\frac{j}{2}-\bar{y}\right)^2}\right|^{\gamma}}\ u_{\frac{l}{2},0}
+\!\!\sum^{2M_x-1}_{l=1}\!\!\!\int_\Omega\frac{\phi_{\frac{l}{2},M_y}(\bar{x},\bar{y})~d\bar{x}d\bar{y}}
{\left|\sqrt{\left(x_\frac{i}{2}-x\right)^2+\left(y_\frac{j}{2}-\bar{y}\right)^2}\right|^{\gamma}}\ u_{\frac{l}{2},M_y}.
\end{split}
\end{equation*}\small

For convenience of implementation, applying the  grid functions in \eqref{2.10} again,   we  obtain the algebraic equation of \eqref{2.14}
\begin{equation}\label{2.17}
 {\mathcal{A}}{U}={F}+{K}~~{\rm with}~~
  {\mathcal{A}}={\mathcal{D}}-{\mathcal{G}} ~~{\rm and}~~{\mathcal{G}}=\left(\begin{array}{cc}{\mathcal{M}} & {\mathcal{Q}}\\ {\mathcal{P}} & {\mathcal{N}}\end{array}\right).
\end{equation}
Here $\mathcal{M}$, ${\mathcal{Q}}$, ${\mathcal{P}}$ and ${\mathcal{N}}$ are different from which in \eqref{2.3}. Denote the kernel function $\delta(x_i-\bar{x},y_j-\bar{y}):=\left|\sqrt{\left(x_i-\bar{x}\right)^2+\left(y_j-\bar{y}\right)^2}\right|^{\gamma}$, the entries of ${\mathcal{D}}$ are $${\mathcal{D}}({i,j})=\int_{c}^{d}\int_{a}^{b}\frac{1}{\delta(x_i-\bar{x},y_j-\bar{y})}d\bar{x}d\bar{y}$$
with $i=1,2,\cdots, M_x-1,\textstyle\frac{1}{2},\frac{3}{2},\cdots,M_x-\frac{1}{2}$, $j=1,2,\cdots, M_y-1,\textstyle\frac{1}{2},\frac{3}{2},\cdots,M_y-\frac{1}{2}$. It should be noted that the coefficients of the variables in \eqref{2.16} can not  be computed directly, but it can be verified to have the block-Toeplitz properties by the coefficients expression in (i), (ii), (iii) and (iv) later. And the matrix  ${\mathcal{G}}$  consists of four block-structured matrices with Toeplitz-like blocks, and the block-Toeplitz properties of the ${\mathcal{M}}_{\left(M_x-1\right)\times\left(M_x-1\right)}$  can be expressed as follows:
\begin{equation*}
{\mathcal{M}}=\left(\begin{array}{cccccc}
{\mathcal{M}}_{1,1}        & {\mathcal{M}}_{1,2} & \cdots                          & {\mathcal{M}}_{1,M_x-2}  &{\mathcal{M}}_{1,M_x-1}\\[2mm]
{\mathcal{M}}_{2,1}        & {\mathcal{M}}_{1,1} & {\mathcal{M}}_{1,2} & \ddots                                  &{\mathcal{M}}_{1,M_x-2}\\[2mm]
\vdots                                 & {\mathcal{M}}_{2,1} & {\mathcal{M}}_{1,1} & \ddots                                  &\vdots\\[2mm]
{\mathcal{M}}_{M_x-2,1} & \ddots                           &\ddots                            &\ddots                                   &{\mathcal{M}}_{1,2}\\[2mm]
{\mathcal{M}}_{M_x-1,1} &{\mathcal{M}}_{M_x-2,1} &\cdots                       &{\mathcal{M}}_{2,1}         &{\mathcal{M}}_{1,1}
\end{array}\right),
\end{equation*}
in the same way, ${\mathcal{Q}}_{\left(M_x-1\right)\times M_x }$, ${\mathcal{P}}_{ M_x \times\left(M_x-1\right)}$ and ${\mathcal{N}}_{ M_x \times  M_x }$ are expressed in Appendix A.

Each block of ${\mathcal{M}}$ numbered with $i,l=1,2,\cdots, M_x-1$ has the following form
\begin{equation}\label{2.18}
{\mathcal{M}}_{i,l}=\left(\begin{array}{cccc}
{\mathcal{M}}^{\mathcal{M}}_{i,l} & {\mathcal{M}}^{\mathcal{Q}}_{i,l} \\[2mm]
{\mathcal{M}}^{\mathcal{P}}_{i,l} & {\mathcal{M}}^{\mathcal{N}}_{i,l}
\end{array}\right)_{\left(2M_y-1\right)\times\left(2M_y-1\right)}~~~~{\rm with}~~ {\mathcal{M}}_{i,l}={\mathcal{M}}_{l,i},
\end{equation}
and the entries of ${\mathcal{M}}^{\mathcal{M}}_{i,l}$, ${\mathcal{M}}^{\mathcal{P}}_{i,l}$, ${\mathcal{M}}^{\mathcal{Q}}_{i,l}$ and ${\mathcal{M}}^{\mathcal{N}}_{i,l}$ numbered with $j$ and $r$ can be expressed as
\begin{itemize}
  \item
  ${\mathcal{M}}^{\mathcal{M}}_{i,l}(j,r)=C_{i,j}^{l,r}$,\qquad\quad\ for $ j=1,2,\cdots, M_y-1,\, r=1,2,\cdots, M_y-1$;
  \item
  ${\mathcal{M}}^{\mathcal{P}}_{i,l}(j,r)=C_{i,j}^{l,r}$,\qquad\quad\ \,for $j=\textstyle\frac{1}{2},\frac{3}{2},\cdots,M_y-\frac{1}{2},\, r=1,2,\cdots, M_y-1$;
  \item
  ${\mathcal{M}}^{\mathcal{Q}}_{i,l}(j,r-\frac{1}{2})=C_{i,j}^{l,r-\frac{1}{2}}$,\ for $j=1,2,\cdots, M_y-1,\, r=1,2,\cdots, M_y$;
  \item
  ${\mathcal{M}}^{\mathcal{N}}_{i,l}(j,r-\frac{1}{2})=C_{i,j}^{l,r-\frac{1}{2}}$,\ for $j=\textstyle\frac{1}{2},\frac{3}{2},\cdots,M_y-\frac{1}{2},\, r=1,2,\cdots, M_y$.
\end{itemize}
Here the above coefficients  can be computed  by the following 

(i) for $ l=1,2,\cdots, M_x-1,\ r=1,2,\cdots, M_y-1$,
\begin{equation*}
\begin{split}
C_{i,j}^{l,r}=&\int_{y_{r-1}}^{y_r}\int_{x_{l-1}}^{x_l}\frac{\phi_l^{-}(\bar{x})\phi_r^{-}(\bar{y})}{\delta(x_i-\bar{x},y_j-\bar{y})}d\bar{x}d\bar{y}
+\int_{y_{r}}^{y_{r+1}}\int_{x_{l-1}}^{x_l}\frac{\phi_l^{-}(\bar{x})\phi_r^{+}(\bar{y})}{\delta(x_i-\bar{x},y_j-\bar{y})}d\bar{x}d\bar{y}\\
&+\int_{y_{r-1}}^{y_r}\int_{x_{l}}^{x_{l+1}}\frac{\phi_l^{+}(\bar{x})\phi_r^{-}(\bar{y})}{\delta(x_i-\bar{x},y_j-\bar{y})}d\bar{x}d\bar{y}
+\int_{y_{r}}^{y_{r+1}}\int_{x_{l}}^{x_{l+1}}\frac{\phi_l^{+}(\bar{x})\phi_r^{+}(\bar{y})}{\delta(x_i-\bar{x},y_j-\bar{y})}d\bar{x}d\bar{y};
\end{split}
\end{equation*}

(ii) for $l=1,2,\cdots, M_x-1,\ r=1,2,\cdots, M_y$,
\begin{equation*}
C_{i,j}^{l,r-\frac{1}{2}}=\int_{y_{r-1}}^{y_r}\int_{x_{l-1}}^{x_l}\frac{\phi_l^{-}(\bar{x})\phi_{r-\frac{1}{2}}(\bar{y})}{\delta(x_i-\bar{x},y_j-\bar{y})}d\bar{x}d\bar{y}
+\int_{y_{r-1}}^{y_{r}}\int_{x_{l}}^{x_{l+1}}\frac{\phi_l^{-}(\bar{x})\phi_{r-\frac{1}{2}}(\bar{y})}{\delta(x_i-\bar{x},y_j-\bar{y})}d\bar{x}d\bar{y};
\end{equation*}

(iii) for $ l=1,2,\cdots, M_x,\ r=1,2,\cdots, M_y-1$,
\begin{equation*}
C_{i,j}^{l-\frac{1}{2},r}=\int_{y_{r-1}}^{y_r}\int_{x_{l-1}}^{x_{l}}\frac{\phi_{l-\frac{1}{2}}(\bar{x})\phi_r^{-}(\bar{y})}{\delta(x_i-\bar{x},y_j-\bar{y})}d\bar{x}d\bar{y}
+\int_{y_{r}}^{y_{r+1}}\int_{x_{l-1}}^{x_{l}}\frac{\phi_{l-\frac{1}{2}}(\bar{x})\phi_r^{+}(\bar{y})}{\delta(x_i-\bar{x},y_j-\bar{y})}d\bar{x}d\bar{y};
\end{equation*}

(iv) for $l=1,2,\cdots, M_x,\ r=1,2,\cdots, M_y$,
\begin{equation*}
C_{i,j}^{l-\frac{1}{2},r-\frac{1}{2}}=\int_{y_{r-1}}^{y_{r}}\int_{x_{l-1}}^{x_{l}}\frac{\phi_{l-\frac{1}{2}}(\bar{x})\phi_{r-\frac{1}{2}}(\bar{y})}{\delta(x_i-\bar{x},y_j-\bar{y})}d\bar{x}d\bar{y}.
\end{equation*}
Moreover, we have
\begin{flalign*}
\begin{split}
\quad &{\mathcal{M}}^{\mathcal{M}}_{i,l}(j,r)={\mathcal{M}}^{\mathcal{M}}_{i,l}(j+1,r+1),\qquad\ {\rm for}\ j=1,2,\cdots, M_y-1,\, r=1,2,\cdots, M_y-1,\\[2mm]
\quad  &{\mathcal{M}}^{\mathcal{P}}_{i,l}(j,r)={\mathcal{M}}^{\mathcal{P}}_{i,l}(j+1,r+1),\qquad\ \,{\rm for}\ j=\textstyle\frac{1}{2},\frac{3}{2},\cdots,M_y-\frac{1}{2},\, r=1,2,\cdots, M_y-1,\\[2mm]
\quad  &{\mathcal{M}}^{\mathcal{Q}}_{i,l}(j,r-\frac{1}{2})={\mathcal{M}}^{\mathcal{Q}}_{i,l}(j+1,r+\frac{1}{2}),\ {\rm for}\ j=1,2,\cdots, M_y-1,\, r=1,2,\cdots, M_y,\\[2mm]
\quad  &{\mathcal{M}}^{\mathcal{N}}_{i,l}(j,r-\frac{1}{2})={\mathcal{M}}^{\mathcal{N}}_{i,l}(j+1,r+\frac{1}{2}),\ {\rm for}\ j=\textstyle\frac{1}{2},\frac{3}{2},\cdots,M_y-\frac{1}{2},\, r=1,2,\cdots, M_y,
\end{split}&&
\end{flalign*}
where ${\mathcal{M}}_{i,l}^{\mathcal{M}}$, ${\mathcal{M}}_{i,l}^{\mathcal{N}}$ are symmetric Toeplitz matrices, 
and ${\mathcal{M}}_{i,l}^{\mathcal{Q}}$, ${\mathcal{M}}_{i,l}^{\mathcal{P}}$ are  rectangular  matrices.

Consider the following  two-dimensional  time-dependent nonlocal problem
\begin{equation}\label{2.19}
\frac{\partial u(x,y,t)}{\partial t}+ \int_\Omega \frac{u(x,y,t)-u(\bar{x},\bar{y},t)}{\left|\sqrt{\left(x-\bar{x}\right)^2+\left(y-\bar{y}\right)^2}\right|^{\gamma}}d\bar{x}d\bar{y}=f(x,y,t),
\ (x,y,t)\in\Omega\times\left(0,T\right],
\end{equation}
with the nonhomogeneous   Dirichlet  boundary conditions  and the  initial condition.

By the similar discussion in \eqref{2.13}, we have the following  Crank-Nicolson scheme
\begin{equation}\label{2.20}
\left(I+\frac{\tau}{2}{\mathcal{A}}\right){U}^{k}=\left(I-\frac{\tau}{2}{\mathcal{A}}\right){U}^{k-1}+\tau {F}^{k-\frac{1}{2}}+\tau {K}^{k-\frac{1}{2}}.
\end{equation}

We can similarly  construct  the following  $4$-step backward differentiation formula (BDF4)  \cite{Chen:13} to confirm the superconvergence results
with $O(M \log M)$ operations, i.e.,
 \begin{equation}\label{2.21}
{\rm BDF4}~~\left(\frac{25}{12}I\!+\!\tau\mathcal{A}\right)U^k
=4U^{k-1}\!-\!3U^{k-2}+\frac{4}{3}U^{k-3}-\frac{1}{4}U^{k-4}+\tau F^k+\tau K^k.
 \end{equation}

\section{Stability and convergence analysis}
In first  subsection, we  study the  spectral properties  for nonsymmetric and indefinite matrix with one-dimensional cases, the unconditionally stability and convergence analysis with Crank-Nicolson scheme are proved in the remainder subsections.
\subsection{Spectral analysis for nonsymmetric and indefinite matrix  $\mathcal{A}$ in 1D}
\begin{lemma}\cite[p.\,28]{Quarteroni:07}\label{lemma3.1}
A real matrix $A$ of order $n$ is positive definite  if and only if  its symmetric part $H=\frac{A+A^T}{2}$ is positive definite.
Let $H \in \mathbb{R}^{n\times n}$ be symmetric. Then $H$ is positive definite if and only if the eigenvalues of $H$ are positive.
\end{lemma}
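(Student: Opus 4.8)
The plan is to prove the two equivalences separately, since both rest on elementary properties of the quadratic form $x^{T}Ax$. Throughout I would use the definition that a real matrix $A$ of order $n$ is positive definite precisely when $x^{T}Ax>0$ for every nonzero $x\in\mathbb{R}^{n}$ (no symmetry of $A$ being assumed).

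For the first equivalence, the crucial observation is that the quadratic form attached to $A$ coincides with the one attached to its symmetric part $H=\frac{A+A^{T}}{2}$. Indeed, since $x^{T}Ax$ is a scalar it equals its own transpose, so $x^{T}Ax=x^{T}A^{T}x$; averaging the two expressions yields $x^{T}Ax=\frac{1}{2}\left(x^{T}Ax+x^{T}A^{T}x\right)=x^{T}Hx$ for every $x$. Consequently $x^{T}Ax>0$ holds for all nonzero $x$ if and only if $x^{T}Hx>0$ does, which is exactly the assertion that $A$ is positive definite if and only if $H$ is positive definite.

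For the second equivalence I would invoke the spectral theorem for the symmetric matrix $H$: there exist an orthogonal $Q$ and a diagonal $\Lambda=\mathrm{diag}(\lambda_{1},\ldots,\lambda_{n})$ of the real eigenvalues with $H=Q\Lambda Q^{T}$. Setting $y=Q^{T}x$ gives $x^{T}Hx=y^{T}\Lambda y=\sum_{i=1}^{n}\lambda_{i}y_{i}^{2}$, and orthogonality of $Q$ ensures that $x$ ranges over the nonzero vectors exactly when $y$ does. If every $\lambda_{i}>0$ then $\sum_{i}\lambda_{i}y_{i}^{2}>0$ whenever $y\neq 0$, so $H$ is positive definite; conversely, taking $x$ to be a unit eigenvector for $\lambda_{i}$ gives $x^{T}Hx=\lambda_{i}>0$, so each eigenvalue is positive.

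Since this is a classical fact of linear algebra and each direction follows directly from the identity $x^{T}Ax=x^{T}Hx$ together with the diagonalization of $H$, I do not anticipate a genuine obstacle. The only points requiring care are the scalar-transpose trick that reduces the nonsymmetric case to the symmetric one, and the standard appeal to the spectral theorem to pass between definiteness of $H$ and positivity of its spectrum.
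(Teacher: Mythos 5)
Your proof is correct and is the standard textbook argument: the scalar-transpose identity $x^{T}Ax=x^{T}Hx$ reduces the general case to the symmetric one, and the spectral theorem settles the symmetric case. The paper itself gives no proof — it cites this lemma directly from Quarteroni, Sacco, and Saleri — and your argument matches the classical one found there, so there is nothing further to reconcile.
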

\begin{lemma}\cite{{Varah:75}}\label{lemma3.2}
Assume $A$ is diagonally dominant by rows.
 Then $||A^{-1}||_{\infty}<\frac{1}{\alpha}$
with $\alpha=\min_{i}\left( |a_{i,i}|-\sum_{j\neq i}|a_{i,j}| \right).$
\end{lemma}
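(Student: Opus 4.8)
The plan is to estimate the operator norm $\|A^{-1}\|_\infty$ through the amount by which $A$ can contract a vector in the $\infty$-norm, reducing everything to a one-line row estimate. First observe that the hypothesis, read through the quantity $\alpha=\min_i\bigl(|a_{i,i}|-\sum_{j\ne i}|a_{i,j}|\bigr)>0$, is precisely strict row diagonal dominance; by the Gershgorin circle theorem (equivalently the Levy--Desplanques theorem) every eigenvalue of $A$ is bounded away from $0$, so $A$ is nonsingular and $A^{-1}$ is well defined. Writing the induced matrix norm as $\|A^{-1}\|_\infty=\max_{x\ne 0}\|A^{-1}x\|_\infty/\|x\|_\infty$ and substituting $z=A^{-1}x$ (so $x=Az$, and $z$ ranges over all nonzero vectors since $A$ is invertible), this becomes $\|A^{-1}\|_\infty=\max_{z\ne 0}\|z\|_\infty/\|Az\|_\infty$. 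Hence it suffices to establish the lower bound $\|Az\|_\infty\ge\alpha\,\|z\|_\infty$ for every $z\ne 0$.

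To prove this bound I would fix $z\ne 0$ and choose an index $k$ realizing $|z_k|=\|z\|_\infty$. Isolating the diagonal term in the $k$-th component gives $(Az)_k=a_{k,k}z_k+\sum_{j\ne k}a_{k,j}z_j$, and the reverse triangle inequality combined with $|z_j|\le\|z\|_\infty$ yields $|(Az)_k|\ge|a_{k,k}|\,|z_k|-\|z\|_\infty\sum_{j\ne k}|a_{k,j}|$. Using $|z_k|=\|z\|_\infty$ and factoring, $|(Az)_k|\ge\bigl(|a_{k,k}|-\sum_{j\ne k}|a_{k,j}|\bigr)\|z\|_\infty\ge\alpha\,\|z\|_\infty$. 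Since $\|Az\|_\infty\ge|(Az)_k|$, the required lower bound follows at once, and combining it with the norm identity above gives $\|z\|_\infty/\|Az\|_\infty\le 1/\alpha$ for all $z\ne 0$, hence $\|A^{-1}\|_\infty\le 1/\alpha$.

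I expect the only real obstacle to be the \emph{strict} inequality written in the statement rather than the estimate itself. The chain of bounds above is sharp enough to deliver $\le 1/\alpha$, but for a diagonal matrix one has exactly $\|A^{-1}\|_\infty=1/\min_i|a_{i,i}|=1/\alpha$, so equality can occur and the strict sign cannot hold in full generality. Thus the principal point to reconcile is whether the intended statement is $\le 1/\alpha$ (the form actually proved by the argument above, and the form due to Varah) or whether additional structure of the matrices $\mathcal{D}-\mathcal{G}$ arising in \eqref{2.3} is implicitly being invoked to make the off-diagonal contribution strictly positive. The analytic core, namely the contraction bound $\|Az\|_\infty\ge\alpha\|z\|_\infty$, is a short and direct computation and presents no difficulty.
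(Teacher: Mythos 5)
Your argument is correct and is exactly the standard proof of Varah's bound (the paper states this lemma as a citation to \cite{Varah:75} without reproducing a proof, and the proof in that reference is the same contraction estimate $\|Az\|_\infty\ge\alpha\|z\|_\infty$ at a maximizing index that you give). Your closing remark is also well taken: the inequality should read $\|A^{-1}\|_\infty\le 1/\alpha$ rather than $<$, since a diagonal matrix attains equality, and the hypothesis must be \emph{strict} row diagonal dominance so that $\alpha>0$; the non-strict bound is all that is used later (e.g.\ in Lemma \ref{lemma3.5}).
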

\begin{lemma}\cite{CQSW:19}\label{lemma3.3}
Let the matrices $\mathcal{A}$, $\mathcal{M}$,  $\mathcal{N}$, $\mathcal{P}$, $\mathcal{Q}$ be defined by \eqref{2.3}.
Then $ \mathcal{M}$, $ \mathcal{N}$,  $\mathcal{P}$,  $\mathcal{Q}$ are positive matrices.
Moreover, the matrix $\mathcal{A}$ is  strictly  diagonally dominant by rows.
\end{lemma}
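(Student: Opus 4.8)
The plan is to reduce the statement to two independent facts: (i) the \emph{entrywise positivity} of all the quadrature weights $m_k,n_k,p_k,q_k$ together with the boundary weights $\beta_i,\gamma_i$, and (ii) a \emph{partition-of-unity identity} for the diagonal entries $d_i=\int_a^b|x_i-y|^{-\gamma}\,dy$. Granting (i) for the moment, I would first dispatch the diagonal dominance. Since the nodal and mid-nodal quadratic Lagrange bases reproduce constants, $\sum_j\phi_j\equiv 1$ on $[a,b]$, and inserting this under the integral for $d_i$ gives $d_i=\beta_i+\beta_{M-i}+\sum_{j=1}^{M-1}m_{|i-j|}+\sum_{j=0}^{M-1}q_{|i-j-1/2|-1/2}$, with the analogous identity $d_{i+1/2}=\gamma_i+\gamma_{M-i-1}+\sum p+\sum n$ for the half rows. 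By \eqref{2.3} the diagonal entry of $\mathcal A$ is $d_i-m_0$ (resp. $d_{i+1/2}-n_0$) and every off-diagonal entry equals $-m_{|i-j|}$, $-q_{(\cdot)}$ (resp. $-p,-n$). Because all weights are positive, $|a_{ij}|$ coincides with the weight itself, and the row excess telescopes:
\[
a_{ii}-\sum_{j\neq i}|a_{ij}|=\beta_i+\beta_{M-i}>0,\qquad
a_{i+1/2,i+1/2}-\sum_{j}|a_{i+1/2,j}|=\gamma_i+\gamma_{M-i-1}>0 .
\]
This gives strict diagonal dominance by rows (and, since every summand is positive, $a_{ii}>0$ as well).

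The core of the work is therefore (i). Rescaling by $h$ and centering at $x_i=0$, each weight is $\eta_{h,\gamma}$ times a linear functional $L[g]$ applied to $g(s)=s^{3-\gamma}$. A direct check shows that every such $L$ annihilates polynomials of degree $\le 2$ while $L[s^3]>0$, so the Peano kernel theorem yields $L[g]=\int K(s)\,g'''(s)\,ds$ with $g'''(s)=(3-\gamma)(2-\gamma)(1-\gamma)s^{-\gamma}>0$ for $s>0$. For the weights $q_k$ (hence $n_k=q_{k-1/2}$) the kernel is exactly the nonnegative bump $K_q(s)=4(k+1-s)(s-k)\ge0$ on $[k,k+1]$, so $q_k,n_k>0$ is immediate. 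This settles positivity of $\mathcal N$ and $\mathcal Q$.

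The remaining weights $m_k$ (hence $p_k=m_{k+1/2}$) and $\beta_i$ (hence $\gamma_i=\beta_{i+1/2}$) are the hard case, and this is where I expect the main obstacle to lie: their Peano kernel changes sign. For $m_k$ one computes $K(k+w)=2w^2-3|w|+1$, which is positive for $|w|<\tfrac12$ and negative for $\tfrac12<|w|<1$, so positivity cannot follow from $g'''>0$ alone. I would work with the exact representation $m_k=\eta_{h,\gamma}(3-\gamma)(2-\gamma)(1-\gamma)\int_0^1\kappa(w)\,\psi_k(w)\,dw$, where $\kappa(w)=2w^2-3w+1$ and $\psi_k(w)=(k+w)^{-\gamma}+(k-w)^{-\gamma}$ is positive, increasing and convex on $(0,1)$. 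Writing $\psi_k(w)=\psi_k(\tfrac12)+[\psi_k(w)-\psi_k(\tfrac12)]$ and using $\int_0^1\kappa=\tfrac16$ reduces positivity to the quantitative inequality $\psi_k'(1)\int_0^1|\kappa(w)|\,|w-\tfrac12|\,dw\le \tfrac16\psi_k(\tfrac12)$; the universal constant $\int_0^1|\kappa||w-\tfrac12|\,dw=\tfrac1{12}$ turns this into $\psi_k'(1)/\psi_k(\tfrac12)\le 2$, which holds uniformly in $\gamma\in(0,1)$ whenever the centre exceeds $1$ (i.e. for $m_k$ with $k\ge2$ and $p_k$ with $k\ge1$, whose support stays away from the singular point).

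Finally, the finitely many \emph{base cases} for which the interval touches the singularity must be handled directly from the closed forms: $m_0=2(1+\gamma)\eta_{h,\gamma}$, $q_0=4(1-\gamma)\eta_{h,\gamma}$, $n_0$ and $\gamma_0$ are manifestly positive, $\beta_1=(1-\gamma)^2\eta_{h,\gamma}>0$, and $m_1$, $p_0$, $\gamma_1$ are shown positive by evaluating the corresponding $\int_0^1\kappa(w)\psi_1(w)\,dw$ in closed form (a Beta-function computation, the singularity $(1-w)^{-\gamma}$ being integrable for $\gamma<1$). The functional governing $\beta_i$ carries an extra $g''$ endpoint term but annihilates quadratics with $L[s^3]=1$, so the identical leading-term-plus-remainder argument applies for $i\ge2$. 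Assembling the easy weights, the uniform wing estimate for centres $>1$, and the explicit base cases yields positivity of all entries of $\mathcal M,\mathcal N,\mathcal P,\mathcal Q$, completing the proof. The delicate point throughout is the uniform-in-$\gamma$ sign control of $m_k$ and $\beta_i$ near the singularity, where the sign-changing kernel rules out any soft monotonicity argument; this is precisely the estimate carried out in \cite{CQSW:19}.
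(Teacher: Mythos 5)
First, a point of comparison: the paper does not prove this lemma at all --- it is imported verbatim from \cite{CQSW:19} --- so there is no in-paper proof to measure you against. On its own merits, your overall architecture is the right one and almost certainly mirrors the cited source: the reduction of strict diagonal dominance to entrywise positivity of the weights plus the partition-of-unity identity $d_{i}=\beta_i+\beta_{M-i}+\sum_j m_{|i-j|}+\sum_j q_{(\cdot)}$ is exactly the mechanism the present paper itself relies on later (the proof of Lemma 3.5 uses precisely the identity that the row excess of $\mathcal A$ equals the two boundary weights), and your Peano-kernel representations check out: the functional behind $m_k$ annihilates $\mathbb{P}_2$ with kernel $\kappa(w)=(2w-1)(w-1)$ on each side of the centre, the one behind $q_k$ has the nonnegative kernel $4(k+1-s)(s-k)$, and the constants $\int_0^1\kappa=\tfrac16$ and $\int_0^1|\kappa|\,|w-\tfrac12|\,dw=\tfrac1{12}$ are correct.

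There is, however, a concrete gap in the positivity half. Your uniform wing estimate $\psi'(1)\le 2\psi(\tfrac12)$ does \emph{not} cover $p_1=m_{3/2}$, which you explicitly claim it does. For centre $\tfrac32$ one has $\psi'(1)=\gamma\bigl[(1/2)^{-\gamma-1}-(5/2)^{-\gamma-1}\bigr]$ and $2\psi(\tfrac12)=2\bigl[2^{-\gamma}+1\bigr]$; as $\gamma\to1^-$ these tend to $3.84$ and $3$ respectively, so the sufficient condition fails for $\gamma$ near $1$ even though $m_{3/2}$ itself remains (barely) positive --- numerically $m_{3/2}/\eta_{h,\gamma}\approx 0.05$ at $\gamma=0.9$. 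So $p_1$ must be moved into your list of hand-checked base cases, or the estimate sharpened (e.g.\ by expanding $\psi$ to second order about $w=\tfrac12$ rather than using a global Lipschitz bound). Relatedly, the base cases you defer to ``a Beta-function computation'' are exactly the delicate ones: $m_1/\eta_{h,\gamma}=2^{5-\gamma}-(3-\gamma)(2^{2-\gamma}+6)$ vanishes at $\gamma=1$ and approaches $0$ from above as $\gamma\uparrow 1$, so its positivity on $(0,1)$ is not ``manifest'' and needs the actual computation (one can show $\int_0^1\kappa(w)(1-w)^{-\gamma}dw=\tfrac{1-\gamma}{(3-\gamma)(2-\gamma)}$ and then control the companion integral $\int_0^1\kappa(w)(1+w)^{-\gamma}dw$, which is elementary but must be done). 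Until $p_1$, $m_1$, $p_0$ and $\gamma_1$ are actually verified, the entrywise positivity of $\mathcal M$ and $\mathcal P$ --- and with it the diagonal-dominance conclusion, which uses $|a_{ij}|=$ weight --- is not established. The skeleton is right; the near-singular weights are where the real work lives, and your write-up both underestimates and, in the case of $p_1$, misstates that work.
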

\begin{lemma}\label{lemma3.4}
Let the matrix $\mathcal{A}$ be defined by \eqref{2.3}.
Then the diagonal entries of $\mathcal{A}$ are bounded.
\end{lemma}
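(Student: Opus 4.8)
The plan is to read off the diagonal entries of $\mathcal{A}$ directly from the block decomposition $\mathcal{A}=\mathcal{D}-\mathcal{G}$ in \eqref{2.3}. Since the off-diagonal blocks $\mathcal{Q}$ and $\mathcal{P}$ of $\mathcal{G}$ are \emph{rectangular} and occupy only the off-diagonal position, they contribute nothing to the main diagonal of $\mathcal{A}$; hence the diagonal entries are exactly $d_i-m_0$ for the first $M-1$ rows (from $\mathcal{D}_1-\mathcal{M}$) and $d_{i+\frac12}-n_0$ for the remaining $M$ rows (from $\mathcal{D}_2-\mathcal{N}$), where $m_0$, $n_0$ are the leading Toeplitz entries of $\mathcal{M}$, $\mathcal{N}$. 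It therefore suffices to bound these quantities uniformly in the mesh.

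First I would write the diagonal coefficients of $\mathcal{D}$ explicitly. The coefficient of $u(x_i)$ in \eqref{2.2} is the self-integral
\[
d_i=\int_a^{b}\frac{1}{|x_i-y|^{\gamma}}\,dy=\int_a^{x_i}(x_i-y)^{-\gamma}\,dy+\int_{x_i}^{b}(y-x_i)^{-\gamma}\,dy=\frac{(x_i-a)^{1-\gamma}+(b-x_i)^{1-\gamma}}{1-\gamma},
\]
and the identical computation with $x_i$ replaced by $x_{i+\frac12}$ gives $d_{i+\frac12}$. Because $0<\gamma<1$ the singularity at $y=x_i$ is integrable and $1-\gamma>0$, so each summand is at most $(b-a)^{1-\gamma}$; thus $d_i,\,d_{i+\frac12}\le \tfrac{2(b-a)^{1-\gamma}}{1-\gamma}$, a bound depending only on the fixed domain and on $\gamma$, not on the index or on $h$.

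Next I would invoke the closed forms $m_0=2(1+\gamma)\eta_{h,\gamma}$ and $n_0=(2-\gamma)2^{\gamma+1}\eta_{h,\gamma}$ with $\eta_{h,\gamma}=\tfrac{h^{1-\gamma}}{(3-\gamma)(2-\gamma)(1-\gamma)}$. Both are positive multiples of $h^{1-\gamma}$ with $\gamma$-dependent constants, so for any admissible step size (e.g. $h\le b-a$) they are bounded, and in fact tend to $0$ as $h\to0$. Combining with the triangle inequality then shows that $|d_i-m_0|$ and $|d_{i+\frac12}-n_0|$ are bounded uniformly in $i$ and $h$, which is the claim.

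The computation is essentially routine once the diagonal entries have been correctly isolated. The one point that will require care is verifying that the rectangular blocks $\mathcal{Q}$, $\mathcal{P}$ truly contribute nothing to the main diagonal of $\mathcal{A}$, so that only $d_i$, $m_0$, $n_0$ enter the estimate; and, since this lemma feeds the later spectral and unconditional-stability analysis, the bounds must be established as \emph{mesh-independent} rather than merely finite for fixed $h$, which is precisely what the closed-form evaluation above delivers.
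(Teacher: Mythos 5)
Your proposal is correct and follows essentially the same route as the paper: isolate the diagonal entries of $\mathcal{A}$ (which come only from $\mathcal{D}_1-\mathcal{M}$ and $\mathcal{D}_2-\mathcal{N}$, the rectangular blocks $\mathcal{P},\mathcal{Q}$ being off-diagonal), and bound them via $\int_a^b |x_{i/2}-y|^{-\gamma}\,dy \le \frac{2(b-a)^{1-\gamma}}{1-\gamma}$. The only difference is cosmetic: the paper invokes Lemma \ref{lemma3.3} to drop the positive terms $m_0,n_0$ and record the two-sided bound $0<a_{i,i}<C_a:=\frac{2(b-a)^{1-\gamma}}{1-\gamma}$ (the precise constant reused in Theorems \ref{theorem3.6}--\ref{theorem3.7}), whereas you bound $|m_0|,|n_0|$ explicitly by their closed forms and apply the triangle inequality, which yields a slightly looser but equally valid mesh-independent bound.
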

\begin{proof}
Let ${\rm diag}\left(\mathcal{A}\right)=\left(a_1,a_2,\ldots, a_{M-1}, a_\frac{1}{2},a_\frac{3}{2},\ldots, a_{M-\frac{1}{2}}\right)$.  From Lemma \ref{lemma3.3} and \eqref{2.3}, we have
\begin{equation*}
0<a_\frac{i}{2}<\eta_{h,\gamma}d_\frac{i}{2} = \int^b_a\frac{1}{\left|x_\frac{i}{2}-y\right|^\gamma}dy
=\frac{1}{1-\gamma}\left[ \left(x_{\frac{i}{2}}-a\right)^{1-\gamma}+\left(b-x_{\frac{i}{2}}\right)^{1-\gamma} \right]
\leq \frac{2\left(b-a\right)^{1-\gamma}}{1-\gamma}.
\end{equation*}
The proof is completed.
\end{proof}

Let the condition number $\kappa_p\left(A\right)=\left|\left|A\right|\right|_p\left|\left|A^{-1}\right|\right|_p$ with  $p=1,2,\ldots \infty$. Then we have
\begin{lemma}\label{lemma3.5}
Let the matrix $\mathcal{A}$ be defined by \eqref{2.3}.
Then the condition number
$$\kappa_\infty\left(\mathcal{A}\right)=\left|\left|\mathcal{A}\right|\right|_\infty\left|\left|\mathcal{A}^{-1}\right|\right|_\infty= \mathcal {O}\left(N\right).$$
\end{lemma}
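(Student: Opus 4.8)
The plan is to bound the two factors $\|\mathcal{A}\|_\infty$ and $\|\mathcal{A}^{-1}\|_\infty$ separately, exploiting the strict row diagonal dominance from Lemma \ref{lemma3.3} and the boundedness of the diagonal from Lemma \ref{lemma3.4}. \textbf{First}, for the numerator: by Lemma \ref{lemma3.4} every diagonal entry satisfies $0<a_{i/2}\le C_0:=\frac{2(b-a)^{1-\gamma}}{1-\gamma}$, and by the strict diagonal dominance of Lemma \ref{lemma3.3} the off-diagonal absolute row sum is smaller than the diagonal, so each full absolute row sum is below $2a_{i/2}\le 2C_0$. Hence $\|\mathcal{A}\|_\infty<2C_0=\mathcal{O}(1)$, independently of the mesh.

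\textbf{Second}, for the inverse I would invoke the Varah bound (Lemma \ref{lemma3.2}): since $\mathcal{A}$ is diagonally dominant by rows, $\|\mathcal{A}^{-1}\|_\infty<1/\alpha$ with $\alpha=\min_i\left(|a_{ii}|-\sum_{j\ne i}|a_{ij}|\right)$, the minimal excess diagonal dominance. Because every off-diagonal entry of $\mathcal{A}=\mathcal{D}-\mathcal{G}$ is nonpositive (the blocks $\mathcal{M},\mathcal{N},\mathcal{P},\mathcal{Q}$ are positive by Lemma \ref{lemma3.3}) while the diagonal is positive, this excess coincides with the signed row sum, i.e. $\alpha=\min_i(\mathcal{A}\mathbf{1})_i$, where $\mathbf{1}$ is the all-ones vector. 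Thus the entire estimate reduces to computing the row sums of $\mathcal{A}$.

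\textbf{Third}, and this is the crux, I would evaluate these row sums using the fact that the nonlocal operator annihilates constants. Since $u\equiv1$ is reproduced exactly by the piecewise quadratic interpolant, the collocation identity \eqref{2.2} holds with zero integral and $R_{i/2}=0$; feeding $U=\mathbf{1}$, $u_0=u_M=1$ into the discrete system shows that each row sum of $\mathcal{A}$ equals precisely the corresponding boundary weight, namely $(\mathcal{A}\mathbf{1})_i=\beta_i+\beta_{M-i}$ for the integer rows and $\gamma_i+\gamma_{M-i-1}$ for the half-integer rows. It then remains to estimate these boundary coefficients. Viewing $\beta_i$ as the integral of the width-$h$ boundary basis function $\phi_0$ against the kernel $|x_i-\cdot|^{-\gamma}$, whose support lies at distance $x_i-a\asymp ih$ from $x_i$, one obtains the two-sided estimate $\beta_i\asymp h\cdot(ih)^{-\gamma}=h^{1-\gamma}i^{-\gamma}$, and similarly for $\gamma_i$. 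The minimum over $i$ is attained near the centre $i\approx M/2$, where $x_i-a\approx(b-a)/2$ is $\mathcal{O}(1)$, giving $\alpha\asymp h=\Theta(1/M)$.

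\textbf{Finally}, combining the two factors yields $\kappa_\infty(\mathcal{A})=\|\mathcal{A}\|_\infty\|\mathcal{A}^{-1}\|_\infty<2C_0/\alpha=\mathcal{O}(M)$; since the order of $\mathcal{A}$ is $2M-1$ and the temporal and spatial grids are refined comparably ($M\asymp N$), this is $\mathcal{O}(N)$. \emph{The main obstacle} is the sharp two-sided control of the boundary weights $\beta_i,\gamma_i$ in the third step: one must verify that the leading $i^{2-\gamma}$ and $i^{1-\gamma}$ contributions in the explicit binomial expressions for $\beta_i$ cancel, leaving the genuine order $h^{1-\gamma}i^{-\gamma}$, and then confirm that the excess diagonal dominance is smallest in the interior rather than near the boundary, so that $\alpha$ degrades only like $1/M$ and no faster.
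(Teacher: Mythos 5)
Your proposal is correct and follows essentially the same route as the paper: bound $\|\mathcal{A}\|_\infty=\mathcal{O}(1)$ from the bounded diagonal plus diagonal dominance (Lemmas \ref{lemma3.3}--\ref{lemma3.4}), and bound $\|\mathcal{A}^{-1}\|_\infty$ via the Varah estimate applied to the signed row sums, which by the partition-of-unity property reduce to the boundary weights $\beta_i+\beta_{M-i}$ (resp. $\gamma_i+\gamma_{M-i-1}$) and are bounded below by a constant times $h^{1-\gamma}\left[i^{-\gamma}+(M-i)^{-\gamma}\right]\geq (b-a)^{-\gamma}h$. The ``main obstacle'' you flag --- the two-sided control of $\beta_i$ --- is exactly the estimate the paper imports as inequality (4.1) of \cite{CQSW:19}, and in fact only the one-sided lower bound is needed for the stated $\mathcal{O}(N)$ conclusion.
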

\begin{proof}
Form (4.1) of \cite{CQSW:19}, we have
\begin{equation*}
\begin{split}
&\int^{b}_{a} \frac{ 1}{\left| x_{i} - y \right|^{\gamma}} dy -
 \int^{b}_{a} \frac{ \sum^{N-1}_{j=1}  \phi_{j}(y)}{\left| x_{i} - y \right|^{\gamma}} dy
=  \int^{b}_{a} \frac{\phi_{0}(x) }{\left| x_{i} - y \right|^{\gamma}} dy +
    \int^{b}_{a} \frac{\phi_{N}(x) }{\left| x_i - y \right|^{\gamma}} dy=\sigma_{h,\gamma}\rho_i\\
&\quad \geq \frac{(2-\gamma)(1-\gamma)}{2}\sigma_{h,\gamma}\left[\frac{1}{i^{\gamma}}  +\frac{1}{\left(N-i\right)^{\gamma}}\right]
=\frac{h^{1-\gamma}}{2}\left[\frac{1}{i^{\gamma}}  +\frac{1}{\left(N-i\right)^{\gamma}}\right]\geq \frac{h^{1-\gamma}}{N^\gamma}=(b-a)^{-\gamma} h.
\end{split}
\end{equation*}
From the above inequality  and Lemma \ref{lemma3.2}, it yields $\left|\left|\mathcal{A}^{-1}\right|\right|_{\infty}<(b-a)^\gamma h^{-1}$.
Combine with Lemmas \ref{lemma3.3} and \ref{lemma3.4}, we obtain
$$\kappa_\infty\left(\mathcal{A}\right)=||\mathcal{A}||_\infty\left|\left|\mathcal{A}^{-1}\right|\right|_\infty\leq \frac{4\left(b-a\right)}{1-\gamma} h^{-1} =\mathcal {O}\left(N\right).$$
The proof is completed.
\end{proof}

\begin{remark}
From   Lemma \ref{lemma3.3} and Theorem $1.21$ of \cite[p.\,23]{Varga:00}, it yields  $\Re\left(\lambda(\mathcal{A})\right)>0$ and  $\mathcal{A}$  nonsingular.
However, from Lemma \ref{lemma3.1} and counter-example in Figure 1, it shows that
$$\min\left( \lambda\left(H\right) \right)<0,~~\max\left( \lambda\left(H\right) \right)>0~~{\rm with}~~H=\frac{\mathcal{A}+\mathcal{A}^T}{2},$$
 i.e.,
matrix $\mathcal{A}$ is a nonsymmetric and indefinite.
\end{remark}
\begin{figure}[h] \centering
  \begin{tabular}{cc}
      \includegraphics[width=0.45\textwidth]{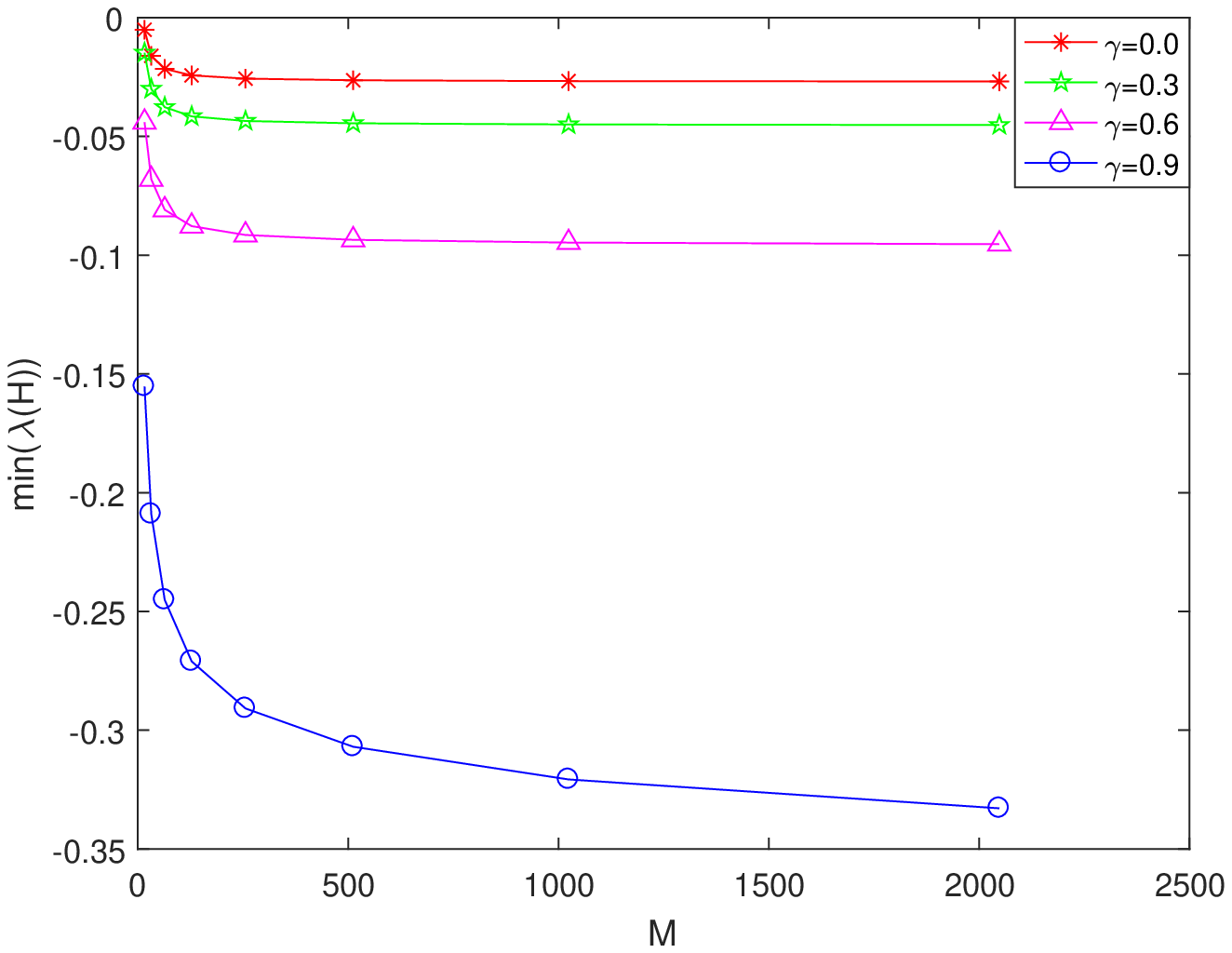}&\includegraphics[width=0.45\textwidth]{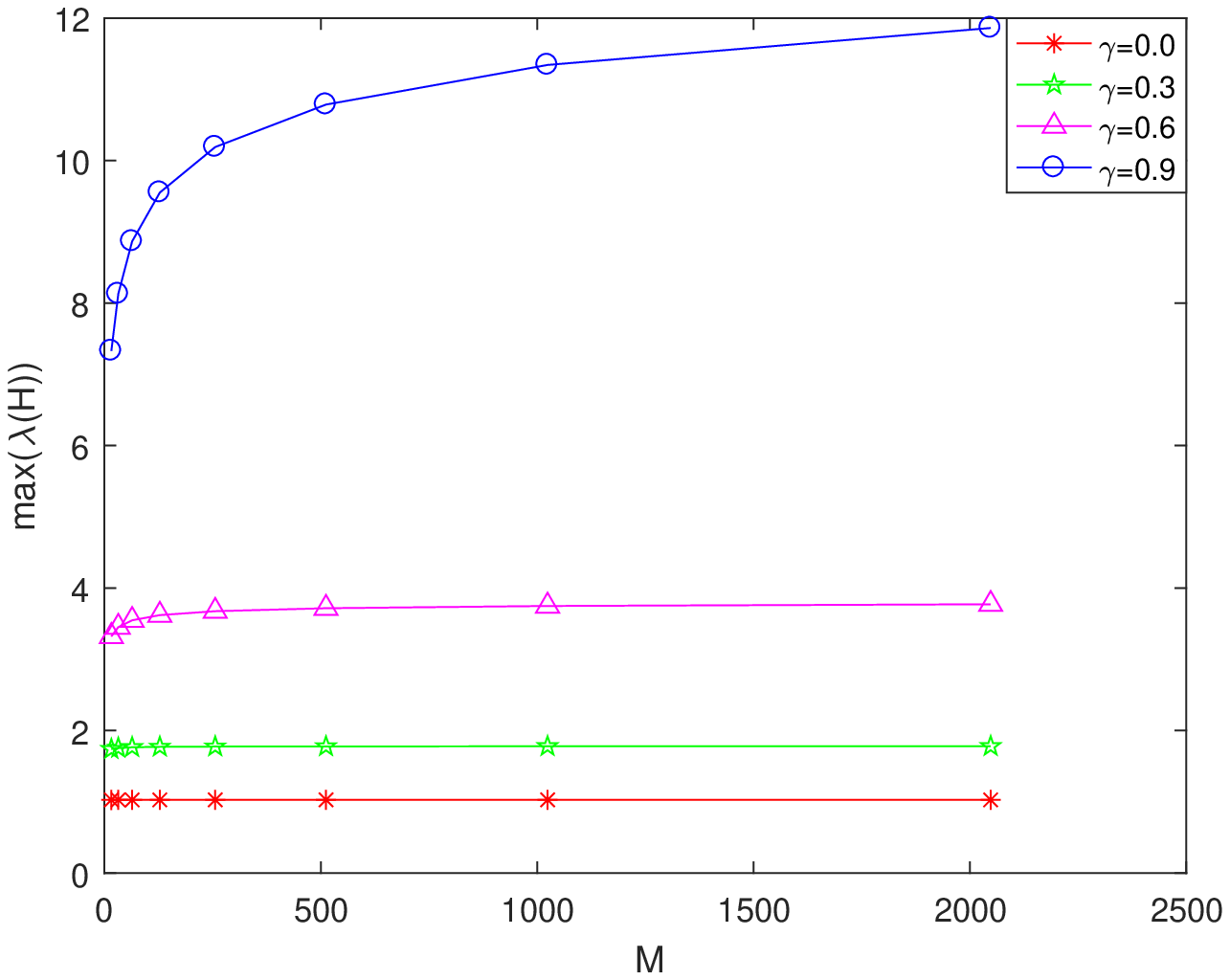}
  \end{tabular}
  \caption{The minimum and maximum  eigenvalues of $H=\frac{\mathcal{A}+\mathcal{A}^T}{2}$.}
  \label{figure01}
\end{figure}
\subsection{Stability and convergence analysis for Crank-Nicolson scheme \eqref{2.4} in 1D}
For steady-state nonlocal problem of \eqref{1.1}, an  optimal global convergence estimate with  $\mathcal{O}\left(h^3\right)$  was established in  \cite{CQSW:19}.
However,  for  the   time-dependent    problems of \eqref{1.1}, we next prove  the stability and convergence with the superconvergence  results.
\begin{theorem}\label{theorem3.6}
The numerical schemes \eqref{2.4} is unconditionally stable.
\end{theorem}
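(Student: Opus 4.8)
The plan is to reduce unconditional stability to a spectral-radius bound on the one-step amplification matrix. Writing \eqref{2.4} as the recursion
\[
U^{k}=\mathcal{B}\,U^{k-1}+\tau\left(I+\tfrac{\tau}{2}\mathcal{A}\right)^{-1}\!\left(F^{k-1/2}+K^{k-1/2}\right),
\qquad
\mathcal{B}=\left(I+\tfrac{\tau}{2}\mathcal{A}\right)^{-1}\left(I-\tfrac{\tau}{2}\mathcal{A}\right),
\]
the scheme is unconditionally stable once I show $\rho(\mathcal{B})\le 1$ (in fact $<1$) for every $\tau>0$ and every mesh size $h$. First I would confirm that the scheme is well posed: by Lemma \ref{lemma3.3} the matrix $\mathcal{A}$ is strictly diagonally dominant by rows with positive diagonal, so $I+\tfrac{\tau}{2}\mathcal{A}$ is again strictly diagonally dominant and hence nonsingular, which makes $\mathcal{B}$ well defined.

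Next I would pass from $\mathcal{A}$ to $\mathcal{B}$ through the spectral mapping theorem. Both factors of $\mathcal{B}$ are polynomials in $\mathcal{A}$, so they commute and $\mathcal{B}=r(\mathcal{A})$ for the rational function $r(z)=\dfrac{1-(\tau/2)z}{1+(\tau/2)z}$, whose only pole $z=-2/\tau$ lies outside $\sigma(\mathcal{A})$. Hence every eigenvalue of $\mathcal{B}$ equals $\mu=r(\lambda)$ for some $\lambda\in\sigma(\mathcal{A})$. The decisive input is the Remark preceding the theorem: combining Lemma \ref{lemma3.3} with the eigenvalue-inclusion theorem (Theorem $1.21$ of \cite{Varga:00}) gives $\Re(\lambda)>0$ for every $\lambda\in\sigma(\mathcal{A})$. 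A one-line computation with $\lambda=\alpha+\mathrm{i}\beta$, $\alpha>0$, then yields
\[
\left|1+\tfrac{\tau}{2}\lambda\right|^{2}-\left|1-\tfrac{\tau}{2}\lambda\right|^{2}=2\tau\alpha>0,
\]
so $|\mu|<1$ for all eigenvalues and all $\tau>0$. Therefore $\rho(\mathcal{B})<1$ uniformly in $\tau$ and $h$, which is exactly the assertion of unconditional stability.

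The main obstacle is that the usual energy argument is unavailable here. If the symmetric part $H=\tfrac12(\mathcal{A}+\mathcal{A}^{T})$ were positive semidefinite, one could prove the stronger Euclidean bound $\|\mathcal{B}\|_{2}\le 1$ directly; but the Remark and Figure \ref{figure01} show that $H$ is indefinite, so $\mathcal{A}$ is nonsymmetric and indefinite and no such contractivity in the $2$-norm holds. Since $\mathcal{B}$ is thus non-normal, $\rho(\mathcal{B})<1$ governs the long-time decay of the homogeneous solution and gives stability in the spectral sense adopted here, and the whole argument hinges on transferring the strict row diagonal dominance of Lemma \ref{lemma3.3} into the half-plane localization $\Re(\sigma(\mathcal{A}))>0$ via the Gershgorin/Varga inclusion theorem. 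I would keep that transfer as the crux of the proof and treat the M\"obius estimate $|r(\lambda)|<1$ as routine.
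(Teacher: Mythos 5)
Your reduction to the amplification matrix $\mathcal{B}=\left(I+\tfrac{\tau}{2}\mathcal{A}\right)^{-1}\left(I-\tfrac{\tau}{2}\mathcal{A}\right)$ and the M\"obius computation showing $|r(\lambda)|<1$ whenever $\Re(\lambda)>0$ are both correct, and the half-plane localization $\Re(\sigma(\mathcal{A}))>0$ does follow from Lemma \ref{lemma3.3} via the Gershgorin/Varga inclusion, as the Remark in the paper records. The gap is in the final step: for a \emph{non-normal} matrix, $\rho(\mathcal{B})<1$ controls only the asymptotic decay of $\mathcal{B}^k$ for that one fixed matrix; it does not yield a bound on $\left|\left|\mathcal{B}^k\right|\right|$ that is uniform in the mesh parameters. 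Here the dimension of $\mathcal{B}$ grows as $h\to 0$, so one must bound $\sup_{k\tau\le T}\left|\left|\mathcal{B}^k\right|\right|$ uniformly over the whole family of matrices; eigenvalue information alone cannot do this (this is precisely the content of the Kreiss matrix theorem, and even a uniform resolvent condition only gives dimension-dependent constants in general). You acknowledge the non-normality and retreat to ``stability in the spectral sense,'' but that is a weaker notion than what the theorem asserts and than what the paper actually needs: the convergence proof of Theorem \ref{theorem3.7} consumes a normwise one-step estimate of the form $\left|\left|E^k\right|\right|_\infty\le(1+C\tau)\left|\left|E^{k-1}\right|\right|_\infty+\tau\left|\left|R\right|\right|_\infty$, which your argument does not supply.

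The paper closes this gap by working directly in the $\infty$-norm with a discrete maximum-principle argument: using the sign pattern $a_{i,j}<0$ for $i\neq j$, the strict row diagonal dominance $\sum_{j\neq i}|a_{i,j}|<a_{i,i}$ from Lemma \ref{lemma3.3}, and the uniform bound $0<a_{i,i}<C_a$ from Lemma \ref{lemma3.4}, it evaluates the scheme at the index where $|\epsilon^k_{i/2}|$ attains its maximum and obtains $\left|\left|\varepsilon^k\right|\right|_\infty\le(1+C_a\tau)\left|\left|\varepsilon^{k-1}\right|\right|_\infty\le\exp(TC_a)\left|\left|\varepsilon^0\right|\right|_\infty$, a bound uniform in $h$, $\tau$ and $k$ with $k\tau\le T$. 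If you want to salvage your route, you would need to replace the spectral radius by an actual norm estimate, e.g.\ show $\left|\left|\left(I+\tfrac{\tau}{2}\mathcal{A}\right)^{-1}\right|\right|_\infty\le\bigl(1+\tfrac{\tau}{2}\alpha\bigr)^{-1}$ from diagonal dominance (Lemma \ref{lemma3.2}) together with $\left|\left|I-\tfrac{\tau}{2}\mathcal{A}\right|\right|_\infty\le 1+\tau C_a-\tfrac{\tau}{2}\alpha$, which is essentially the paper's argument in disguise; as written, your proof does not establish unconditional stability in the sense claimed.
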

\begin{proof}
Let $\widetilde{u}_{i/2}^k~(i=1,2,\ldots,2M-1;\,k=0,1,\ldots,N)$ be the approximate solution of $u_{i/2}^k$,
which is the exact solution of the difference scheme (\ref{2.4}).
Putting $\epsilon_{i/2}^k=\widetilde{u}_{i/2}^k-u_{i/2}^k$, then using (\ref{2.4}), we obtain the following perturbation equation
\begin{equation*}
\left(I+\frac{\tau}{2}\mathcal {A}\right)\varepsilon^{k}=\left(I-\frac{\tau}{2}\mathcal {A}\right)\varepsilon^{k-1},
\end{equation*}
 with $\varepsilon^k=\left(\epsilon^k_{1},\epsilon^k_{2},\cdots,\epsilon^k_{M-1},\epsilon^k_{\frac{1}{2}},\epsilon^k_{\frac{3}{2}},\cdots,\epsilon^k_{M-\frac{1}{2}}\right)^{T}$.
Upon relabeling and reorienting the  vectors $\varepsilon^k$ as
\begin{equation*}
\begin{split}
\widetilde{\varepsilon}^k&=\left(\epsilon^k_{\frac{1}{2}},\epsilon^k_{1},\epsilon^k_{\frac{3}{2}},\epsilon^k_{2},\cdots,\epsilon^k_{M-1},\epsilon^k_{M-\frac{1}{2}}\right)^{T},
\end{split}
\end{equation*}
then the above equation can be recast as
 \begin{equation*}
\left(I+\frac{\tau}{2}\widetilde{\mathcal {A}}\right)\widetilde{\varepsilon}^{k}=\left(I-\frac{\tau}{2}\widetilde{\mathcal {A}}\right)\widetilde{\varepsilon}^{k-1},
\end{equation*}
i.e.,
\begin{equation*}
\left(1+\frac{\tau}{2}a_{i,i}\right)\epsilon^k_{\frac{i}{2}}=\epsilon^{k-1}_\frac{i}{2}-\frac{\tau}{2}\mathop\sum\limits_{j=1}^{2M-1}a_{i,j}\epsilon^{k-1}_\frac{j}{2}
-\frac{\tau}{2}\sum^{2M-1}\limits_{j=1,j\neq i}a_{i,j}\epsilon^{k}_\frac{j}{2}~~{\rm with}~~\mathcal{\widetilde{A}}=\left\{ a_{i,j} \right\}_{i,j=1}^{2M-1}.
\end{equation*}

Let $\left|\epsilon^k_{\frac{i_{0}}{2}}\right|:=\left|\left|\varepsilon^k\right|\right|_{\infty}=\mathop{\max}\limits_{1\leq i\leq 2M-1}|\epsilon^k_\frac{i}{2}|$.
From Lemmas \ref{lemma3.3} and \ref{lemma3.4}, it yields $a_{i,j}<0$, $i\neq j$, and $\sum^{2M-1}\limits_{j=1,j\neq i}\left|a_{i,j}\right|<a_{i,i}$, $0<a_{i,i}< C_a:=\frac{2\left(b-a\right)^{1-\gamma}}{1-\gamma}$,then
\begin{equation*}
\begin{split}
\left(1+\frac{\tau}{2}a_{i_0,i_0}\right)\left|\left|\varepsilon^k\right|\right|_{\infty}
&\leq \left|\epsilon^{k-1}_\frac{i_0}{2}\right|+\frac{\tau}{2}\mathop\sum\limits_{j=1}^{2M-1}\left|a_{i_0,j}\right|\left|\epsilon^{k-1}_\frac{j}{2}\right|
+\frac{\tau}{2}\mathop\sum^{2M-1}\limits_{j=1,j\neq i_0}\left|a_{i_0,j}\right|\left|\epsilon^{k}_\frac{j}{2}\right|\\
&\leq \left|\left|\varepsilon^{k-1}\right|\right|_{\infty}+\frac{\tau}{2}\mathop\sum\limits_{j=1}^{2M-1}\left|a_{i_0,j}\right|\left|\left|\varepsilon^{k-1}\right|\right|_{\infty}
+\frac{\tau}{2}\mathop\sum^{2M-1}\limits_{j=1,j\neq i_0}\left|a_{i_0,j}\right|\left|\left|\varepsilon^{k}\right|\right|_{\infty},
\end{split}
\end{equation*}
it implies that
\begin{equation*}
\left|\left|\varepsilon^k\right|\right|_\infty
\leq \left(1+C_a\tau\right)\left|\left|\varepsilon^{k-1}\right|\right|_\infty\leq \left(1+C_a\tau\right)^k\left|\left|\varepsilon^{0}\right|\right|_\infty
\leq \exp(TC_a)\left|\left|\varepsilon^0\right|\right|_\infty.
\end{equation*}
The proof is completed.
\end{proof}

\begin{theorem}\label{theorem3.7}
Let $u\left(x_{i/2},t_k\right)$ be the exact solution of (\ref{1.1}) with $0< \gamma < 1$, and $u_{i/2}^k$ the  solution of
the numerical  scheme (\ref{2.4}). Then
\begin{equation*}
  \begin{split}
\left|\left|u(x_{i/2},t_k)-u_{i/2}^k\right|\right| =\mathcal {O}\left(\tau^2+h^{4-\gamma}\right),  \quad i=1,2,\ldots,2M-1;\,k=0,1,\ldots,N
  \end{split}
  \end{equation*}
with $N\tau\leq T.$
\end{theorem}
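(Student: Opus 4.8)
The plan is to run the stability argument of Theorem~\ref{theorem3.6} with the local truncation error carried along as an inhomogeneous source, and then to close the estimate with a discrete Gr\"onwall inequality in the maximum norm (the same norm in which stability was established). First I would let $\widehat{U}^k$ denote the vector of exact nodal values $u(x_{i/2},t_k)$ arranged as in \eqref{2.4}. Since Crank--Nicolson time-stepping is second order in $\tau$ (assuming $u$ is smooth enough in $t$ on $[0,T]$) and the spatial collocation error is exactly the quantity $R_{i/2}$ appearing in \eqref{2.2}, the exact solution satisfies the perturbed identity
\begin{equation*}
\left(I+\frac{\tau}{2}\mathcal{A}\right)\widehat{U}^{k}=\left(I-\frac{\tau}{2}\mathcal{A}\right)\widehat{U}^{k-1}+\tau F^{k-\frac{1}{2}}+\tau K^{k-\frac{1}{2}}+\tau R^{k-\frac{1}{2}},
\end{equation*}
whose components $R^{k-1/2}_{i/2}$ are of size $\mathcal{O}\left(\tau^2+h^4(\eta_{i/2})^{-\gamma}\right)$. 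Subtracting the numerical scheme \eqref{2.4} yields the error equation for $E^{k}:=\widehat{U}^{k}-U^{k}$,
\begin{equation*}
\left(I+\frac{\tau}{2}\mathcal{A}\right)E^{k}=\left(I-\frac{\tau}{2}\mathcal{A}\right)E^{k-1}+\tau R^{k-\frac{1}{2}},
\end{equation*}
with $E^{0}=0$ because the initial data is taken exactly.

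Next I would apply the relabeling/reorientation of Theorem~\ref{theorem3.6} (a permutation ordering the unknowns by increasing node position) and isolate the largest component. Writing $a_{i,j}$ for the entries of the reordered matrix and choosing $i_0$ with $|e^{k}_{i_0/2}|=\|E^{k}\|_\infty$, the very manipulation that produced the stability recursion---using $a_{i,j}<0$ for $i\neq j$, the strict row diagonal dominance $\sum_{j\neq i}|a_{i,j}|<a_{i,i}$ of Lemma~\ref{lemma3.3}, and the uniform bound $0<a_{i,i}<C_a$ of Lemma~\ref{lemma3.4}---gives, after moving the $E^{k}$ terms to the left and using that the resulting diagonal coefficient exceeds $1$,
\begin{equation*}
\|E^{k}\|_\infty\leq\left(1+C_a\tau\right)\|E^{k-1}\|_\infty+\tau\|R^{k-\frac{1}{2}}\|_\infty.
\end{equation*}
Iterating from $E^{0}=0$ and summing the geometric factor, then invoking $N\tau\leq T$ and $1+x\leq e^{x}$, yields
\begin{equation*}
\|E^{k}\|_\infty\leq\tau\sum_{m=1}^{k}\left(1+C_a\tau\right)^{k-m}\|R^{m-\frac{1}{2}}\|_\infty\leq\frac{\exp(TC_a)-1}{C_a}\,\max_{1\leq m\leq k}\|R^{m-\frac{1}{2}}\|_\infty,
\end{equation*}
so the global error is controlled, with a $k$-independent constant, by the worst truncation error in the maximum norm.

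The main obstacle is the final conversion of the spatially nonuniform bound $h^4(\eta_{i/2})^{-\gamma}$ into the clean rate $h^{4-\gamma}$. For this I would use the node geometry: the interior collocation nodes nearest the endpoints are $x_{1/2}=a+h/2$ and $x_{(2M-1)/2}=b-h/2$, so $\eta_{i/2}=\min\{x_{i/2}-a,\,b-x_{i/2}\}\geq h/2$ for every $i$. Hence the singular weight is maximized precisely at the boundary-adjacent nodes, and
\begin{equation*}
\max_{1\leq i\leq 2M-1}h^4\left(\eta_{i/2}\right)^{-\gamma}\leq h^4\left(h/2\right)^{-\gamma}=2^{\gamma}h^{4-\gamma},
\end{equation*}
which together with the $\mathcal{O}(\tau^2)$ temporal term gives $\|R^{m-1/2}\|_\infty=\mathcal{O}(\tau^2+h^{4-\gamma})$ uniformly in $m$. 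Substituting this into the recursion above completes the estimate $\|u(x_{i/2},t_k)-u^{k}_{i/2}\|=\mathcal{O}(\tau^2+h^{4-\gamma})$.

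The delicate points to verify are that the collocation-error constant hidden in $R_{i/2}$ is independent of the time level $k$ and of $i$---which follows from the $t$-independence of the spatial quadrature estimate together with uniform-in-$t$ bounds on the relevant spatial derivatives of $u$ on $[0,T]$---and that the Crank--Nicolson temporal remainder is genuinely $\mathcal{O}(\tau^2)$, which requires the assumed smoothness of $u$ in $t$. Granting these regularity facts, the argument is entirely a source-driven rerun of the stability proof, so no new structural estimates on $\mathcal{A}$ are needed beyond Lemmas~\ref{lemma3.3} and \ref{lemma3.4}.
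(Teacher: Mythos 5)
Your proposal is correct and follows essentially the same route as the paper: form the error equation by subtracting the scheme from the perturbed identity satisfied by the exact nodal values, reorder the unknowns, use the sign pattern, strict row diagonal dominance (Lemma~\ref{lemma3.3}) and the uniform diagonal bound (Lemma~\ref{lemma3.4}) to obtain the recursion $\|E^k\|_\infty\leq(1+C_a\tau)\|E^{k-1}\|_\infty+\tau\|R^{k-1/2}\|_\infty$, and iterate with $E^0=0$. Your explicit justification that $\eta_{i/2}\geq h/2$ converts $h^4(\eta_{i/2})^{-\gamma}$ into $2^{\gamma}h^{4-\gamma}$ is a welcome detail the paper leaves implicit, but it does not change the argument.
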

\begin{proof}
Denote $e_{i/2}^k=u(x_{i/2},t_k)-u_{i/2}^k$, $i=1,2,\ldots,2M-1;\,k=0,1,\ldots,N$.
 Subtracting (\ref{2.4}) from (\ref{1.1}) with $E^0=0$, it yields
\begin{equation*}
\left(I\!+\!\frac{\tau}{2}\mathcal {A}\right)E^{k}=\left(I\!-\!\frac{\tau}{2}\mathcal {A}\right)E^{k-1}\!+\!\tau R^{k-\frac{1}{2}},
\end{equation*}
with $E^k=\left(e^k_{1},e^k_{2},\cdots,e^k_{M-1},e^k_{\frac{1}{2}},e^k_{\frac{3}{2}},\cdots,e^k_{M-\frac{1}{2}}\right)^{T}$ and similarly for  $R^{k-\frac{1}{2}}$.
The local truncation error is  $R_i^{k-\frac{1}{2}}=\mathcal {O}\left(\tau^2+h^4\left(\eta_\frac{i}{2}\right)^{-\gamma}\right)\leq C_R\left(\tau^2+h^{4-\gamma}\right)$,
$\eta_\frac{i}{2}=\min\left\{x_{\frac{i}{2}}-a,b-x_{\frac{i}{2}}\right\}$ in \eqref{2.4} and $C_R$ is  a constant.

Upon relabeling and reorienting the  vectors $E^k$ and $R^{k-\frac{1}{2}}$ as
\begin{equation*}
\widetilde{E}^k\!=\!\left(e^k_{\frac{1}{2}},e^k_{1},e^k_{\frac{3}{2}},e^k_{2},\cdots,e^k_{M-1},e^k_{M-\frac{1}{2}}\right)^{T}\!,
\end{equation*}
\begin{equation*}
\widetilde{R}^{k\!-\!\frac{1}{2}}\!=\!\left(R^{k-\frac{1}{2}}_{\frac{1}{2}},R^{k-\frac{1}{2}}_{1},R^{k-\frac{1}{2}}_{\frac{3}{2}},R^{k-\frac{1}{2}}_{2},\cdots,R^{k-\frac{1}{2}}_{M-1},R_{M-\frac{1}{2}}\right)^{T},
\end{equation*}
then the above equation can be recast as
\begin{equation*}
\left(I+\frac{\tau}{2}\widetilde{\mathcal {A}}\right)\widetilde{E}^{k}=\left(I-\frac{\tau}{2}\widetilde{\mathcal {A}}\right)\widetilde{E}^{k-1}+\tau \widetilde{R}^{k-\frac{1}{2}},
\end{equation*}
i.e.,
\begin{equation*}
\left(1+\frac{\tau}{2}a_{i,i}\right)e^k_{\frac{i}{2}}=e^{k-1}_\frac{i}{2}-\frac{\tau}{2}\mathop\sum\limits_{j=1}^{2M-1}a_{i,j}e^{k-1}_\frac{j}{2}
-\frac{\tau}{2}\sum^{2M-1}\limits_{j=1,j\neq i}a_{i,j}e^{k}_\frac{j}{2}+\tau R_\frac{i}{2}^{k-\frac{1}{2}}~~{\rm with}~~\mathcal{\widetilde{A}}=\left\{ a_{i,j} \right\}_{i,j=1}^{2M-1}.
\end{equation*}
Let $\left|e^k_{\frac{i_{0}}{2}}\right|:=\left|\left|E^k\right|\right|_{\infty}=\mathop{\max}\limits_{1\leq i\leq 2M-1}\left|e^k_\frac{i}{2}\right|$.
Using  Lemmas \ref{lemma3.3} and \ref{lemma3.4}, we get  $a_{i,j}<0$, $i\neq j$, and $\sum^{2M-1}\limits_{j=1,j\neq i}\left|a_{i,j}\right|<a_{i,i}$, $0<a_{i,i}< C_a:=\frac{2\left(b-a\right)^{1-\gamma}}{1-\gamma}$. Therefore, we have
\begin{equation*}
\begin{split}
\left(1\!+\!\frac{\tau}{2}a_{i_0,i_0}\right)\left|\left|E^k\right|\right|_{\infty}
&\!\leq \!\left|e^{k-1}_\frac{i_0}{2}\right|+\frac{\tau}{2}\mathop\sum\limits_{j=1}^{2M-1}\left|a_{i_0,j}\right|\left|e^{k-1}_\frac{j}{2}\right|
+\frac{\tau}{2}\mathop\sum^{2M-1}\limits_{j=1,j\neq i_0}\left|a_{i_0,j}\right|\left|e^{k}_\frac{j}{2}\right|+\tau \left|R_\frac{i_0}{2}^{k-\frac{1}{2}}\right|\\
&\!\leq \!\left(\!\!1\!+\!\frac{\tau}{2}\mathop\sum\limits_{j=1}^{2M-1}\left|a_{i_0,j}\right|\right)\left|\left|E^{k-1}\right|\right|_{\infty}
\!\!+\!\frac{\tau}{2}\mathop\sum^{2M-1}\limits_{j=1,j\neq i_0}\left|a_{i_0,j}\right|\left|\left|E^{k}\right|\right|_{\infty}\!\!+\!\tau \left|R_\frac{i_0}{2}^{k-\frac{1}{2}}\right|.
\end{split}
\end{equation*}
It leads to
\begin{equation*}
\begin{split}
\left|\left|E^k\right|\right|_\infty
&\leq\left(1+\tau C_a\right)\left|\left|E^{k-1}\right|\right|_\infty+ C_R \left(\tau^2+h^{4-\gamma}\right)\tau\\
&\leq\left(1+\tau C_a\right)^k\left|\left|E^{0}\right|\right|_\infty+C_R \left(\tau^2+h^{4-\gamma}\right)\tau\mathop{\sum}\limits^{k-1}_{l=0}\left(1+\tau C_a\right)^l\\
&\leq C_R \left(\tau^2+h^{4-\gamma}\right)\tau\mathop{\sum}\limits^{k-1}_{l=0}\left(1+\tau C_a\right)^k
\leq C_R T \exp\left(TC_a\right)\left(\tau^2+h^{4-\gamma}\right).
\end{split}
\end{equation*}
The proof is completed.
\end{proof}

\subsection{Stability and convergence analysis for 2D with multiplicative Cauchy kernel}
First, we provide a local truncation error analysis, which is still lacking in \cite{CQSW:19}, for two-dimensional cases with multiplicative Cauchy kernel. Then the stability and convergence analysis are given.
\begin{lemma}\label{lemma3.8}
Let ${\mathcal{A}}=\mathcal{D}_x\otimes \mathcal{D}_y-\mathcal{G}_x\otimes \mathcal{G}_y$ be given in \eqref{2.11}. Then ${\mathcal{A}}$ is strictly diagonally dominant by rows.
\end{lemma}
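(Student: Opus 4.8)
The plan is to reduce the two-dimensional statement to the one-dimensional result of Lemma \ref{lemma3.3} by exploiting the tensor structure ${\mathcal{A}}=\mathcal{D}_x\otimes\mathcal{D}_y-\mathcal{G}_x\otimes\mathcal{G}_y$ in \eqref{2.11}. First I would fix notation: index the rows of ${\mathcal{A}}$ by the pair $(i,j)$, where $i$ runs over the $x$-grid points and $j$ over the $y$-grid points. Since $\mathcal{D}_x$, $\mathcal{D}_y$ have the same form as $\mathcal{D}$ in \eqref{2.3}, they are diagonal with positive entries $d^x_i$, $d^y_j$; likewise $\mathcal{G}_x$, $\mathcal{G}_y$ have the same form as $\mathcal{G}$, so by Lemma \ref{lemma3.3} they are positive matrices (all entries strictly positive). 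Set the row sums $S_x(i)=\sum_l(\mathcal{G}_x)_{i,l}>0$ and $S_y(j)=\sum_r(\mathcal{G}_y)_{j,r}>0$.

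Second, I would restate the one-dimensional diagonal dominance in the form I need. By Lemma \ref{lemma3.4} the diagonal entries of $\mathcal{A}_x:=\mathcal{D}_x-\mathcal{G}_x$, namely $d^x_i-(\mathcal{G}_x)_{i,i}$, are positive, and by Lemma \ref{lemma3.3} the matrix $\mathcal{A}_x$ is strictly diagonally dominant by rows. Since the off-diagonal entries of $\mathcal{A}_x$ equal $-(\mathcal{G}_x)_{i,l}<0$, these two facts together read $d^x_i-(\mathcal{G}_x)_{i,i}>\sum_{l\neq i}(\mathcal{G}_x)_{i,l}$, that is $d^x_i>S_x(i)>0$; the same holds for the $y$-factor, $d^y_j>S_y(j)>0$.

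Third, I would read off the $(i,j)$-row of ${\mathcal{A}}$ from the Kronecker product. Because $\mathcal{D}_x\otimes\mathcal{D}_y$ is diagonal, the only diagonal contribution is $d^x_id^y_j$, so the diagonal entry of ${\mathcal{A}}$ is $d^x_id^y_j-(\mathcal{G}_x)_{i,i}(\mathcal{G}_y)_{j,j}$, which is positive by the previous step. Every off-diagonal entry, indexed by $(l,r)\neq(i,j)$, equals $-(\mathcal{G}_x)_{i,l}(\mathcal{G}_y)_{j,r}<0$, so the sum of the absolute values of the off-diagonal entries in row $(i,j)$ is
\[
\sum_{(l,r)\neq(i,j)}(\mathcal{G}_x)_{i,l}(\mathcal{G}_y)_{j,r}=\Big(\sum_l(\mathcal{G}_x)_{i,l}\Big)\Big(\sum_r(\mathcal{G}_y)_{j,r}\Big)-(\mathcal{G}_x)_{i,i}(\mathcal{G}_y)_{j,j}=S_x(i)S_y(j)-(\mathcal{G}_x)_{i,i}(\mathcal{G}_y)_{j,j}.
\]

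Finally, strict diagonal dominance of ${\mathcal{A}}$ at row $(i,j)$ amounts to
\[
d^x_id^y_j-(\mathcal{G}_x)_{i,i}(\mathcal{G}_y)_{j,j}>S_x(i)S_y(j)-(\mathcal{G}_x)_{i,i}(\mathcal{G}_y)_{j,j},
\]
i.e.\ simply $d^x_id^y_j>S_x(i)S_y(j)$ after the common term $(\mathcal{G}_x)_{i,i}(\mathcal{G}_y)_{j,j}$ cancels on both sides. This last inequality is the product of the two one-dimensional inequalities $d^x_i>S_x(i)>0$ and $d^y_j>S_y(j)>0$, hence holds for every $(i,j)$, which proves the claim. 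I expect the only delicate point to be the Kronecker-product bookkeeping of the third step: recognizing that the diagonal of $\mathcal{G}_x\otimes\mathcal{G}_y$ contributes the \emph{same} term $(\mathcal{G}_x)_{i,i}(\mathcal{G}_y)_{j,j}$ to both the diagonal entry and the off-diagonal row sum, so that it cancels and the two-dimensional dominance factorizes cleanly into the two one-dimensional ones.
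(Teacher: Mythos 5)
Your proposal is correct and follows essentially the same route as the paper: both reduce the claim to the product of the two one-dimensional inequalities $d^x_i>\sum_l (\mathcal{G}_x)_{i,l}$ and $d^y_j>\sum_r (\mathcal{G}_y)_{j,r}$ obtained from Lemma \ref{lemma3.3}, using the Kronecker structure of $\mathcal{A}$. Your version merely spells out the bookkeeping (the cancellation of the $(\mathcal{G}_x)_{i,i}(\mathcal{G}_y)_{j,j}$ term between the diagonal entry and the off-diagonal row sum) that the paper's proof leaves implicit.
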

\begin{proof}
From  Lemma \ref{lemma3.3}, we known  that $\mathcal{D}_x-\mathcal{G}_x$ and $\mathcal{D}_y-\mathcal{G}_y$ are strictly diagonally dominant by rows, and $\mathcal{G}_x$, $\mathcal{G}_y$ are  positive matrices.
Denote $\mathcal{D}_x=\{d^x_i\},\,\mathcal{G}_x=\{g^x_{i,l}\},\, i,l=1,2,\cdots,2M_x-1$, i.e.,
\begin{equation*}\begin{array}{cc}
\mathcal{D}_x=\left(\begin{array}{cccc}d^x_1 & ~ & ~ & ~\\
~ & d^x_2 & ~ & ~\\
~&~ & \ddots &~\\
~ & ~ & ~ & d^x_{2M_x-1}\end{array}\right)&
\mathcal{G}_x=\left(\begin{array}{cccc}g^x_{1,1} & g^x_{1,2} & \cdots & g^x_{1,2M_x-1}\\[2mm]
g^x_{2,1} & g^x_{2,2} & \cdots & g^x_{2,2M_x-1}\\[2mm]
\vdots&\vdots & \ddots &\vdots\\[2mm]
g^x_{2M_x-1,1} & g^x_{2M_x-1,2} & \cdots & g^x_{2M_x-1,2M_x-1}\end{array}\right)
\end{array}.
\end{equation*}
Similarly, we can denote $\mathcal{D}_y=\{d^y_j\},\,\mathcal{G}_y=\{g^y_{j,r}\},\, j,r=1,2,\cdots,2M_y-1$. Then

\begin{equation*}
\begin{split}
\mathcal{D}_x\otimes \mathcal{D}_y-\mathcal{G}_x\otimes \mathcal{G}_y=&\left(\begin{array}{cccc}d^x_1\otimes \mathcal{D}_y & ~ & ~ & ~\\
~ & d^x_2\otimes \mathcal{D}_y & ~ & ~\\
~&~ & \ddots &~\\
~ & ~ & ~ & d^x_{2M_x-1}\otimes \mathcal{D}_y\end{array}\right)\\
&-\left(\begin{array}{cccc}g^x_{1,1}\otimes \mathcal{G}_y & g^x_{1,1}\otimes \mathcal{G}_y & \cdots & g^x_{1,2M_x-1}\otimes \mathcal{G}_y\\[2mm]
g^x_{2,1}\otimes \mathcal{G}_y & g^x_{2,2}\otimes \mathcal{G}_y & \cdots & g^x_{2,2M_x-1}\otimes \mathcal{G}_y\\[2mm]
\vdots&\vdots & \ddots &\vdots\\[2mm]
g^x_{2M_x-1,1}\otimes \mathcal{G}_y & g^x_{2M_x-1,2}\otimes \mathcal{G}_y & \cdots & g^x_{2M_x-1,2M_x-1}\otimes \mathcal{G}_y\end{array}\right),
\end{split}
\end{equation*}
and the summation  by rows in ${\mathcal{A}}$ can be expressed as $\left\{d^x_id^y_j-\sum_{l=1}^{2M_x-1}g^x_{i,l}\sum_{r=1}^{2M_y-1}g^y_{j,r}\right\}$,  since $d^x_i>\sum_{l=1}^{2M_x-1}g^x_{i,l}$ and $d^y_j>\sum_{r=1}^{2M_x-1}g^y_{j,r}$.
The proof is completed.
\end{proof}

\begin{lemma}\label{lemma3.9}
Let $0<\gamma<1$, $u(x,y)\in C^6\left(\left[a,b\right]\times\left[c,d\right]\right)$ and $u_{Q}(x,y)$ be defined by \eqref{2.8}. Then for any $\left(x_{\frac{i}{2}},y_{\frac{j}{2}}\right)\in\left(a,b\right)\times\left(c,d\right)$, there exists
\begin{equation*}
\int^d_c\int^b_a\frac{u\left(x,y\right)-u_{Q}\left(x,y\right)}{\left|x_{\frac{i}{2}}-x\right|^\gamma\left|y_{\frac{j}{2}}-y\right|^\gamma}dxdy
=\mathcal{O}\left(h_x^4\left(\eta_{\frac{i}{2}}\right)^{-\gamma} +h_y^4\left(\widetilde{\eta}_{\frac{j}{2}}\right)^{-\gamma}\right)
+\mathcal{O}\left(h_x^{5-\gamma}+h_y^{5-\gamma}\right),
\end{equation*}
with $\eta_{\frac{i}{2}}=\min\left\{x_{\frac{i}{2}}-a,b-x_{\frac{i}{2}}\right\},\ \widetilde{\eta}_{\frac{j}{2}}=\min\left\{y_{\frac{j}{2}}-c,d-y_{\frac{j}{2}}\right\},\ i=1,2\ldots,2M_x-1,\ j=1,2\ldots,2M_y-1.$
\end{lemma}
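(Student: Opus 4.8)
The plan is to exploit the tensor--product structure of the interpolant in \eqref{2.8}. Writing $\mathcal I_x$ and $\mathcal I_y$ for the one--dimensional piecewise quadratic interpolation operators in the $x$-- and $y$--variables, definition \eqref{2.8} says precisely that $u_Q=\mathcal I_x\mathcal I_y u$, and since the two operators act on different variables they commute. I would therefore split the interpolation error by the standard tensor--product identity
\begin{equation*}
u-u_Q=(\mathcal I-\mathcal I_x)u+(\mathcal I-\mathcal I_y)u-(\mathcal I-\mathcal I_x)(\mathcal I-\mathcal I_y)u,
\end{equation*}
where $\mathcal I$ is the identity, substitute it into the double integral, and call the three resulting integrals $T_A$, $T_B$, $T_C$.

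For $T_A=\int_c^d\int_a^b \frac{[(\mathcal I-\mathcal I_x)u](x,y)}{|x_{\frac{i}{2}}-x|^\gamma|y_{\frac{j}{2}}-y|^\gamma}\,dx\,dy$ I would integrate first in $x$ with $y$ frozen. For each fixed $y$ the inner integral is exactly the one--dimensional weakly singular collocation residual appearing in \eqref{2.2}, so by the refined one--dimensional estimate of \cite{CQSW:19} it equals $\mathcal O(h_x^4(\eta_{\frac{i}{2}})^{-\gamma})+\mathcal O(h_x^{5-\gamma})$; crucially, because $u\in C^6$ the constant in this bound can be taken uniform in $y$, as it depends only on bounds for $\partial_x^k u$ with $k\le 6$. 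Integrating the resulting $y$--independent bound against $|y_{\frac{j}{2}}-y|^{-\gamma}$ over $[c,d]$ then merely multiplies it by the bounded factor $\int_c^d|y_{\frac{j}{2}}-y|^{-\gamma}\,dy\le \frac{2(d-c)^{1-\gamma}}{1-\gamma}$ already computed in the proof of Lemma \ref{lemma3.4}. Hence $T_A=\mathcal O(h_x^4(\eta_{\frac{i}{2}})^{-\gamma})+\mathcal O(h_x^{5-\gamma})$, and by the symmetric argument $T_B=\mathcal O(h_y^4(\widetilde\eta_{\frac{j}{2}})^{-\gamma})+\mathcal O(h_y^{5-\gamma})$; together these already account for every term on the right--hand side of the claim.

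It then remains to show that the mixed term $T_C$ is of strictly higher order. Integrating first in $y$ and using that $\mathcal I-\mathcal I_x$ commutes with $\int_c^d(\cdot)\,|y_{\frac{j}{2}}-y|^{-\gamma}\,dy$, I would write $T_C=\int_a^b \frac{[(\mathcal I-\mathcal I_x)G](x)}{|x_{\frac{i}{2}}-x|^\gamma}\,dx$ with $G(x)=\int_c^d\frac{[(\mathcal I-\mathcal I_y)u](x,y)}{|y_{\frac{j}{2}}-y|^\gamma}\,dy$. The one--dimensional estimate in $y$ gives $G(x)=\mathcal O(h_y^4(\widetilde\eta_{\frac{j}{2}})^{-\gamma})$, and differentiating under the integral sign (legitimate since $u\in C^6$) gives the same bound for $\partial_x^kG$ with $k\le 4$; applying the one--dimensional estimate in $x$ to $(\mathcal I-\mathcal I_x)G$ therefore yields $T_C=\mathcal O\big(h_x^4(\eta_{\frac{i}{2}})^{-\gamma}\,h_y^4(\widetilde\eta_{\frac{j}{2}})^{-\gamma}\big)$, which is dominated by the bounds for $T_A$ and $T_B$.

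The genuinely delicate point---and the step I expect to cost the most work---is the \emph{uniformity} of the one--dimensional constants in the frozen variable, together with the commutation of $\mathcal I-\mathcal I_x$ with the singular $y$--integration. One must verify that Fubini applies to the iterated weakly singular integrals (each weight is integrable because $\gamma<1$), that the Peano--kernel and Taylor remainders underlying the $\mathcal O(h^4\eta^{-\gamma})+\mathcal O(h^{5-\gamma})$ bound of \cite{CQSW:19} may be differentiated in the parameter, and that the $C^6$ hypothesis supplies exactly the mixed derivatives $\partial_x^a\partial_y^b u$ with $a+b\le 6$ needed to render all these constants finite and independent of $h_x$, $h_y$ and of the collocation indices. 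Once this bookkeeping is in place, the three estimates combine to give the asserted bound.
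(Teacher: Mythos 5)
Your decomposition is exactly the paper's: writing $u-u_Q=(\mathcal I-\mathcal I_x)u+(\mathcal I-\mathcal I_y)u-(\mathcal I-\mathcal I_x)(\mathcal I-\mathcal I_y)u$ is precisely the splitting into $E_x+E_y+E_{xy}$ used in the paper's proof, and your treatment of the two principal terms (freeze one variable, apply the one-dimensional estimate of \cite{CQSW:19} with constants uniform in the frozen variable, then integrate against the bounded weight $\int_c^d|y_{\frac{j}{2}}-y|^{-\gamma}\,dy$ as in Lemma \ref{lemma3.4}) is the same argument. The one place you diverge is the mixed term. The paper does something much cheaper: it bounds $E_{xy}$ pointwise by the Taylor remainder $\frac{1}{(3!)^2}\bigl|\partial_x^3\partial_y^3u\bigr|\cdot\mathcal O(h_x^3h_y^3)$ --- which uses only a derivative of total order $6$, hence sits inside the $C^6$ hypothesis --- and then integrates the weight, obtaining $\mathcal O(h_x^3h_y^3)$, which is already dominated by $h_x^{5-\gamma}+h_y^{5-\gamma}$. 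Your iterated argument (apply the superconvergent $y$-estimate to get $G$ and its $x$-derivatives, then apply the superconvergent $x$-estimate to $(\mathcal I-\mathcal I_x)G$) gives a sharper product bound, but as you yourself flag, it needs the $y$-estimate to hold for $\partial_x^kG$ with $k$ up to the order the $x$-estimate consumes; since each one-dimensional $\mathcal O(h^4\eta^{-\gamma})$ bound already uses four derivatives, this amounts to requiring $\partial_x^4\partial_y^4u$, i.e., regularity beyond $C^6$ with $a+b\le 6$. Since the mixed term only needs to be shown higher order, the paper's crude pointwise bound is both sufficient and the one actually compatible with the stated hypothesis; with that substitution your proof is correct.
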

\begin{proof}
Let  $\left(x,y\right)\in\left[x_{\frac{l}{2}},x_{\frac{l}{2}+1}\right]\times\left[y_{\frac{r}{2}},y_{\frac{r}{2}+1}\right]$,  $l=0,1,2\ldots,2M_x-2$, $r=0,1,2\ldots,2M_y-2$.
Using Taylor expansion and \eqref{2.8}, there exist $\xi_{\frac{l}{2}}\in \left[x_{\frac{l}{2}},x_{\frac{l}{2}+1}\right]$,   $\zeta_{\frac{r}{2}}\in \left[y_{\frac{r}{2}},y_{\frac{r}{2}+1}\right]$
and  $\varsigma_{\frac{r}{2}}\in \left[y_{\frac{r}{2}},y_{\frac{r}{2}+1}\right]$
such that
\begin{equation*}\label{03.1}
\begin{split}
 u\left(x,y\right)-u_{Q}\left(x,y\right)
&=u\left(x,y\right)-\sum^{2}_{s=0}\phi_{\frac{l+s}{2}}(x)u\left(x_{\frac{l+s}{2}},y\right)\\
&\qquad+\sum^{2}_{s=0}\phi_{\frac{l+s}{2}}(x)u\left(x_{\frac{l+s}{2}},y\right)
-\sum^{2}_{t=0}\phi_{\frac{r+t}{2}}(y)\sum^{2}_{s=0}\phi_{\frac{l+s}{2}}(x)u\left(x_{\frac{l+s}{2}},y_{\frac{r+t}{2}}\right)\\
&=:E_x+E_y+E_{xy},
\end{split}
\end{equation*}
where
\begin{equation*}
\begin{split}
E_x&=\frac{u^{(3)}_{x}\left(\xi_{\frac{l}{2}},y\right)}{3!}\left(x-x_{\frac{l}{2}}\right)\left(x-x_{\frac{l+1}{2}}\right)\left(x-x_{\frac{l}{2}+1}\right),\\
E_y&=\frac{u^{(3)}_{y}\left(x,\zeta_{\frac{r}{2}}\right)}{3!}\left(y-y_{\frac{r}{2}}\right)\left(y-y_{\frac{r+1}{2}}\right)\left(y-y_{\frac{r}{2}+1}\right),
\end{split}
\end{equation*}
and
\begin{equation*}
\begin{split}
E_{xy}=
\frac{-1}{(3!)^2}\frac{\partial^6 u \left(\varsigma_{\frac{r}{2}},\zeta_{\frac{r}{2}}\right)}{\partial x^3 \partial y^3}\left(x\!-x_{\frac{l}{2}}\right)\!
\left(x\!-x_{\frac{l+1}{2}}\right)\!\left(x\!-x_{\frac{l}{2}+1}\right)\!
\left(y\!-y_{\frac{r}{2}}\right)\!\left(y\!-y_{\frac{r+1}{2}}\right)\!\left(y\!-y_{\frac{r}{2}+1}\right).
\end{split}
\end{equation*}
Form  Theorem $3.7$ of \cite{CQSW:19} and  Lemma \ref{lemma3.4}, for any $\left(x_{\frac{i}{2}},y_{\frac{j}{2}}\right)\in\left(a,b\right)\times\left(c,d\right)$, $i=1,2\ldots,2M_x-1$, $j=1,2\ldots,2M_y-1$, we have
\begin{equation*}
\begin{split}
&\left|\!\int^d_c\!\int^b_a\frac{u\left(x,y\right)-u_{Q}\left(x,y\right)}{\left|x_{\frac{i}{2}}-x\right|^\gamma\left|y_{\frac{j}{2}}-y\right|^\gamma}dxdy\right|
=\left|\sum^{2M_y-2}_{r=0}\!\sum^{2M_x-2}_{l=0}\!\int^{y_{\frac{r}{2}+1}}_{y_{\frac{r}{2}}}\!\int^{x_{\frac{l}{2}+1}}_{x_{\frac{l}{2}}}\!\frac{E_x+E_y+E_{xy}}{\left|x_{\frac{i}{2}}-x\right|^\gamma\left|y_{\frac{j}{2}}-y\right|^\gamma}dxdy\right|\\
\leq&\left|\sum^{2M_x-2}_{l=0}\!\int^{x_{\frac{l}{2}+1}}_{x_{\frac{l}{2}}}\frac{E_x}{\left|x_{\frac{i}{2}}-x\right|^\gamma}dx\!\int^{d}_{c}\!\frac{1}{\left|y_{\frac{j}{2}}-y\right|^\gamma}dy\right|
\!+\!\left|\!\int^{b}_{a}\frac{1}{\left|x_{\frac{i}{2}}-x\right|^\gamma}dx\!\sum^{2M_y-2}_{r=0}\!\int^{y_{\frac{r}{2}+1}}_{y_{\frac{r}{2}}}\frac{E_y+E_{xy}}{\left|y_{\frac{j}{2}}-y\right|^\gamma}dy\right|\\
=&\mathcal{O}\left(h_x^4\left(\eta_{\frac{i}{2}}\right)^{-\gamma}\right)+\mathcal{O}\left(h_x^{5-\gamma}\right)
+\mathcal{O}\left(h_y^4\left(\widetilde{\eta}_{\frac{j}{2}}\right)^{-\gamma}\right)+\mathcal{O}\left(h_y^{5-\gamma}\right)+\mathcal{O}\left(h_x^3h_y^3\right)
\end{split}
\end{equation*}
with
$\eta_{\frac{i}{2}}=\min\left\{x_{\frac{i}{2}}-a,b-x_{\frac{i}{2}}\right\},\ \widetilde{\eta}_{\frac{j}{2}}=\min\left\{y_{\frac{j}{2}}-c,d-y_{\frac{j}{2}}\right\}$.
The proof is completed.
\end{proof}

\begin{theorem}\label{Theorem3.10}
The numerical schemes \eqref{2.13} is unconditionally stable.
\end{theorem}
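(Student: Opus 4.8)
The plan is to reproduce, almost verbatim, the one-dimensional stability argument of Theorem \ref{theorem3.6}, with the structural facts of Lemmas \ref{lemma3.3} and \ref{lemma3.4} replaced by their two-dimensional counterparts. First I would let $\widetilde{U}^k$ be the solution of \eqref{2.13} produced by a perturbed initial datum, set $\varepsilon^k=\widetilde{U}^k-U^k$, and subtract the two instances of \eqref{2.13}; the forcing and boundary terms $\tau F^{k-1/2}$, $\tau K^{k-1/2}$ cancel, leaving the homogeneous perturbation equation $\left(I+\frac{\tau}{2}{\mathcal{A}}\right)\varepsilon^k=\left(I-\frac{\tau}{2}{\mathcal{A}}\right)\varepsilon^{k-1}$. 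After a relabeling of the tensor-product grid unknowns so that ${\mathcal{A}}=\{a_{p,q}\}$, this reads componentwise as $\left(1+\frac{\tau}{2}a_{p,p}\right)\varepsilon_p^k=\varepsilon_p^{k-1}-\frac{\tau}{2}\sum_q a_{p,q}\varepsilon_q^{k-1}-\frac{\tau}{2}\sum_{q\neq p}a_{p,q}\varepsilon_q^{k}$.

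Next I would record the three sign/size properties of ${\mathcal{A}}=\mathcal{D}_x\otimes \mathcal{D}_y - \mathcal{G}_x\otimes \mathcal{G}_y$ needed to close the estimate. Because $\mathcal{D}_x\otimes \mathcal{D}_y$ is diagonal and, by Lemma \ref{lemma3.3}, the factors $\mathcal{G}_x,\mathcal{G}_y$ are positive matrices, the Kronecker product $\mathcal{G}_x\otimes \mathcal{G}_y$ is entrywise positive, so every off-diagonal entry of ${\mathcal{A}}$ satisfies $a_{p,q}<0$ for $q\neq p$. Lemma \ref{lemma3.8} supplies strict diagonal dominance by rows, i.e. $\sum_{q\neq p}|a_{p,q}|<a_{p,p}$ with $a_{p,p}>0$. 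Finally I need a mesh-independent upper bound on the diagonal: the diagonal coefficient attached to $\left(x_{i/2},y_{j/2}\right)$ is the separable product $\int_a^b|x_{i/2}-\bar{x}|^{-\gamma}\,d\bar{x}\,\int_c^d|y_{j/2}-\bar{y}|^{-\gamma}\,d\bar{y}$ minus a positive self-interaction term, so, in analogy with Lemma \ref{lemma3.4}, $0<a_{p,p}<\int_a^b|x_{i/2}-\bar{x}|^{-\gamma}\,d\bar{x}\,\int_c^d|y_{j/2}-\bar{y}|^{-\gamma}\,d\bar{y}\le C_a$ with $C_a:=\frac{4(b-a)^{1-\gamma}(d-c)^{1-\gamma}}{(1-\gamma)^2}$.

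With these in hand I would choose the maximizing index $p_0$ so that $\|\varepsilon^k\|_\infty=|\varepsilon^k_{p_0}|$, apply the triangle inequality to the $p_0$ component of the perturbation equation, and bound each $|\varepsilon_q^{k-1}|$ and $|\varepsilon_q^{k}|$ by the corresponding norm. Using $\frac{\tau}{2}\sum_{q\neq p_0}|a_{p_0,q}|<\frac{\tau}{2}a_{p_0,p_0}$ to absorb the implicit contribution against the $\frac{\tau}{2}a_{p_0,p_0}$ term on the left, together with $\sum_q|a_{p_0,q}|<2a_{p_0,p_0}<2C_a$, yields $\|\varepsilon^k\|_\infty\le(1+C_a\tau)\|\varepsilon^{k-1}\|_\infty$. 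Iterating gives $\|\varepsilon^k\|_\infty\le(1+C_a\tau)^k\|\varepsilon^0\|_\infty\le\exp(TC_a)\|\varepsilon^0\|_\infty$ whenever $N\tau\le T$, which is unconditional stability independent of $\tau$ and of $h_x,h_y$.

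The estimate itself is routine once the matrix structure is fixed, so the single genuine point requiring care is the two-dimensional analogue of Lemma \ref{lemma3.4}: I must verify that the diagonal coefficient of the fully discrete operator is indeed controlled by the product of the two separable one-dimensional weakly singular integrals, uniformly in $i,j$ and in the mesh sizes, so that $C_a$ is a genuine mesh-independent constant. The other place where the one-dimensional proof is not inherited verbatim is the off-diagonal negativity, which here rests essentially on $\mathcal{G}_x\otimes \mathcal{G}_y$ being entrywise positive; I would make this tensor-product observation explicit before invoking Lemma \ref{lemma3.8}.
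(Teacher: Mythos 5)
Your proposal is correct and follows essentially the same route as the paper's proof: form the homogeneous perturbation equation, relabel the tensor-product unknowns, and run the maximum-index argument using off-diagonal negativity, the strict row diagonal dominance of Lemma \ref{lemma3.8}, and the mesh-independent diagonal bound $C_a=\frac{4(b-a)^{1-\gamma}(d-c)^{1-\gamma}}{(1-\gamma)^2}$ (the paper's $C_d$), arriving at the same $(1+C_a\tau)$ per-step amplification and $\exp(TC_a)$ bound. The only cosmetic difference is that you work with generic assembled entries $a_{p,q}$ whereas the paper keeps the Kronecker factorization $d^x_id^y_j$ and $g^x_{i,l}g^y_{j,r}$ explicit throughout; the two bookkeepings are equivalent.
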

\begin{proof}
Let $\widetilde{u}_{\frac{i}{2},\frac{j}{2}}^k~(i=1,2,\ldots,2M_x-1;j=1,2,\ldots,2M_y-1;\,k=0,1,\ldots,N)$ be the approximate solution of $u_{\frac{i}{2},\frac{j}{2}}^k$,
which is the exact solution of the difference scheme (\ref{2.13}).
Putting $\epsilon_{\frac{i}{2},\frac{j}{2}}^k=\widetilde{u}_{\frac{i}{2},\frac{j}{2}}^k-u_{\frac{i}{2},\frac{j}{2}}^k$, and we denote
\begin{equation*}
\varepsilon_i^k=\left(\epsilon_{i,1}^k,\epsilon_{i,2}^k,\cdots,\epsilon_{i,M_y-1}^k,\epsilon_{i,\frac{1}{2}}^k,\epsilon_{i,\frac{3}{2}}^k,\cdots,\epsilon_{i ,M_y-\frac{1}{2}}^k\right),
\ i=1,2,\cdots, M_x-1,\textstyle\frac{1}{2},\frac{3}{2},\cdots,M_x-\frac{1}{2}
\end{equation*}
then using (\ref{2.13}), we obtain the following perturbation equation
\begin{equation*}
\left(I+\frac{\tau}{2}{\mathcal{A}}\right){\varepsilon}^{k}=\left(I-\frac{\tau}{2}{\mathcal{A}}\right){\varepsilon}^{k-1},
\end{equation*}
 with ${\varepsilon}^k=\left(\varepsilon_1^k,\varepsilon_2^k,\cdots,\varepsilon_{M_x-1}^k,\varepsilon_{\frac{1}{2}}^k,\varepsilon_{\frac{3}{2}}^k,\cdots,\varepsilon_{M_x-\frac{1}{2}}^k \right)^T$.
Upon relabeling and reorienting the  vectors ${\varepsilon}^k$ as
\begin{equation*}
\widetilde{{\varepsilon}}^k=\left(\widetilde{\varepsilon}^k_{\frac{1}{2}},\widetilde{\varepsilon}^k_{1},\widetilde{\varepsilon}^k_{\frac{3}{2}},\widetilde{\varepsilon}^k_{2},\cdots,\widetilde{\varepsilon}^k_{M_x-1},\widetilde{\varepsilon}^k_{M_x-\frac{1}{2}}\right)^{T},
\end{equation*}
where
\begin{equation*}
\widetilde{\varepsilon}_i^k=\left(\epsilon_{i,\frac{1}{2}}^k,\epsilon_{i,1}^k,\epsilon_{i,\frac{3}{2}}^k,\epsilon_{i,2}^k,\cdots,\epsilon_{i,M_y-1}^k,\epsilon_{i,M_y-\frac{1}{2}}^k\right),
\ i=1,\frac{1}{2},2,\frac{3}{2},\cdots, M_x-1, M_x-\frac{1}{2},
\end{equation*}
then the above equation can be recast as
 \begin{equation*}
\left(I+\frac{\tau}{2}\widetilde{{\mathcal{A}}}\right)\widetilde{{\varepsilon}}^{k}=\left(I-\frac{\tau}{2}\widetilde{{\mathcal{A}}}\right)\widetilde{{\varepsilon}}^{k-1}
~~{\rm with}~~\widetilde{{\mathcal{A}}}=\widetilde{\mathcal{D}}_x\otimes \widetilde{\mathcal{D}}_y-\widetilde{\mathcal{G}}_x\otimes \widetilde{\mathcal{G}}_y,
\end{equation*}
where $\widetilde{\mathcal{D}}_x=\left\{ d^x_{i} \right\},\ \widetilde{\mathcal{G}}_x=\left\{ g^x_{i,l} \right\},\,\textstyle i,l =1,2,\ldots,{2M_x-1}$, $\widetilde{\mathcal{D}}_y=\left\{ d^y_{j} \right\},\ \widetilde{\mathcal{G}}_y=\left\{ g^y_{j,r} \right\},\,\textstyle j,r =1,2,\ldots,{2M_y-1}$, i.e.,
\begin{equation*}
\left(1\!+\frac{\tau}{2}d^x_{i}d^y_{j}\right)\epsilon_{\frac{i}{2},\frac{j}{2}}^k
-\frac{\tau}{2}\mathop\sum\limits_{l=1}^{2M_x-1}\!\mathop\sum\limits_{r=1}^{2M_y-1}\!g^x_{i,l}g^y_{j,r}\epsilon_{\frac{l}{2},\frac{r}{2}}^k
\!=\!\left(1\!-\frac{\tau}{2}d^x_{i}d^y_{j}\right)\epsilon_{\frac{i}{2},\frac{j}{2}}^{k-1}
+\frac{\tau}{2}\mathop\sum\limits_{l=1}^{2M_x-1}\!\mathop\sum\limits_{r=1}^{2M_y-1}\!g^x_{i,l}g^y_{j,r}\epsilon_{\frac{l}{2},\frac{r}{2}}^{k-1}.
\end{equation*}

Let $\left|\epsilon^k_{\frac{i_{0}}{2},\frac{j_{0}}{2}}\right|:=\left|\left|{\varepsilon}^k\right|\right|_{\infty}=\mathop{\max}\limits_{i,j}\left|\epsilon^k_{\frac{i}{2},\frac{j}{2}}\right|$.
From Lemmas \ref{lemma3.3} and \ref{lemma3.4}, it yields $0\!<d^x_{i}d^y_{j}\!< C_{d}\!:=\frac{4\left(b-a\right)^{1-\gamma}\left(d-c\right)^{1-\gamma}}{\left(1-\gamma\right)^2}$, using Theorem \ref{theorem3.6} and Lemma \ref{lemma3.8}, we have
\begin{equation*}
\begin{split}
&\left(1+\frac{\tau}{2}d^x_{i_0}d^y_{j_0}\right)\left|\left|{\varepsilon}^k\right|\right|_{\infty}\\
\leq&\left|\epsilon_{\frac{i_0}{2},\frac{j_0}{2}}^{k-1}\right|
\!+\!\frac{\tau}{2}d^x_{i_0}d^y_{j_0}\left|\epsilon_{\frac{i_0}{2},\frac{j_0}{2}}^{k-1}\right|
\!+\!\frac{\tau}{2}\mathop\sum\limits_{l=1}^{2M_x-1}\mathop\sum\limits_{r=1}^{2M_y-1}g^x_{i_0,l}g^y_{j_0,r}\left|\epsilon_{\frac{l}{2},\frac{r}{2}}^{k-1}\right|
\!+\!\frac{\tau}{2}\mathop\sum\limits_{l=1}^{2M_x-1}\mathop\sum\limits_{r=1}^{2M_y-1}g^x_{i_0,l}g^y_{j_0,r}\left|\epsilon_{\frac{l}{2},\frac{r}{2}}^k\right|\\
\!\leq&\left|\left|{\varepsilon}^{k-1}\right|\right|_{\infty}
\!+\!\frac{\tau}{2}\left(\!\!d^x_{i_0}d^y_{j_0}\!+\!\mathop\sum\limits_{l=1}^{2M_x-1}\mathop\sum\limits_{r=1}^{2M_y-1}g^x_{i_0,l}g^y_{j_0,r}\!\!\right)\left|\left|{\varepsilon}^{k-1}\right|\right|_{\infty}
\!+\!\frac{\tau}{2}\sum\limits_{l=1}^{2M_x-1}\sum\limits_{r=1}^{2M_y-1}g^x_{i_0,l}g^y_{j_0,r}\left|\left|{\varepsilon}^k\right|\right|_{\infty},
\end{split}
\end{equation*}
which leads to
\begin{equation*}
\left|\left|{\varepsilon}^k\right|\right|_{\infty}\leq \left(1+\tau C_{d} \right)\left|\left|{\varepsilon}^{k-1}\right|\right|_{\infty}
\leq\left(1+\tau C_{d} \right)^k\left|\left|{\varepsilon}^{0}\right|\right|_{\infty}\leq \exp\left(TC_{d}\right)\left|\left|{\varepsilon}^{0}\right|\right|_{\infty}.
\end{equation*}
The proof is completed.
\end{proof}

\begin{theorem}\label{Theorem3.11}
Let $u\left(x_{\frac{i}{2}},y_{\frac{j}{2}},t_k\right)$ be the exact solution of (\ref{2.12}) with $0< \gamma < 1$, and $u_{\frac{i}{2},\frac{j}{2}}^k$ the  solution of
the numerical  scheme (\ref{2.13}). Then
\begin{equation*}
  \begin{split}
\left|\left|u\left(x_{\frac{i}{2}},y_{\frac{j}{2}},t_k\right)-u_{\frac{i}{2},\frac{j}{2}}^k\right|\right| =\mathcal {O}\left(\tau^2+h_x^{4-\gamma}+h_y^{4-\gamma}\right),
  \end{split}
  \end{equation*}
with $i=1,2,\ldots,2M_x-1;j=1,2,\ldots,2M_y-1;\,k=0,1,\ldots,N$ and $N\tau\leq T$.
\end{theorem}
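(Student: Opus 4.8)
The plan is to follow the template established in the proofs of Theorems \ref{theorem3.7} and \ref{Theorem3.10}, namely to combine the unconditional stability of scheme \eqref{2.13} with the spatial truncation estimate of Lemma \ref{lemma3.9} via a discrete Gronwall argument in the maximum norm. First I would set $e^k_{\frac{i}{2},\frac{j}{2}}=u\left(x_{\frac{i}{2}},y_{\frac{j}{2}},t_k\right)-u^k_{\frac{i}{2},\frac{j}{2}}$ and subtract the full discretization \eqref{2.13} from \eqref{2.12}. Because the temporal discretization is Crank--Nicolson and the spatial one is the piecewise quadratic collocation analyzed in Lemma \ref{lemma3.9}, the error equation reads $\left(I+\frac{\tau}{2}{\mathcal{A}}\right)E^{k}=\left(I-\frac{\tau}{2}{\mathcal{A}}\right)E^{k-1}+\tau R^{k-\frac{1}{2}}$ with $E^0=0$, where the local truncation error $R^{k-\frac{1}{2}}_{\frac{i}{2},\frac{j}{2}}$ collects the $\mathcal{O}(\tau^2)$ contribution from Crank--Nicolson together with the spatial remainder $R_{\frac{i}{2},\frac{j}{2}}$ of Lemma \ref{lemma3.9}.

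The key preliminary step is to bound $R^{k-\frac{1}{2}}$ uniformly. By Lemma \ref{lemma3.9} the spatial part is $\mathcal{O}\left(h_x^4\left(\eta_{\frac{i}{2}}\right)^{-\gamma}+h_y^4\left(\widetilde{\eta}_{\frac{j}{2}}\right)^{-\gamma}+h_x^{5-\gamma}+h_y^{5-\gamma}\right)$, the cross term $\mathcal{O}(h_x^3 h_y^3)$ being of higher order. Since the collocation half-nodes satisfy $\eta_{\frac{i}{2}}=\min\{x_{\frac{i}{2}}-a,\,b-x_{\frac{i}{2}}\}\geq h_x/2$ and likewise $\widetilde{\eta}_{\frac{j}{2}}\geq h_y/2$, the singular weights obey $\left(\eta_{\frac{i}{2}}\right)^{-\gamma}\leq C h_x^{-\gamma}$ and $\left(\widetilde{\eta}_{\frac{j}{2}}\right)^{-\gamma}\leq C h_y^{-\gamma}$, so that $\left|R^{k-\frac{1}{2}}_{\frac{i}{2},\frac{j}{2}}\right|\leq C_R\left(\tau^2+h_x^{4-\gamma}+h_y^{4-\gamma}\right)$ uniformly in $i,j,k$ for some constant $C_R$.

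Next I would relabel and reorient $E^k$ and $R^{k-\frac{1}{2}}$ exactly as in the proof of Theorem \ref{Theorem3.10}, so that the coefficient matrix assumes the tensor form $\widetilde{\mathcal{A}}=\widetilde{\mathcal{D}}_x\otimes\widetilde{\mathcal{D}}_y-\widetilde{\mathcal{G}}_x\otimes\widetilde{\mathcal{G}}_y$ with $\widetilde{\mathcal{D}}_x=\{d^x_i\}$, $\widetilde{\mathcal{G}}_x=\{g^x_{i,l}\}$ and analogously in $y$. Writing the scheme componentwise, selecting the index $(i_0,j_0)$ at which $\left|e^k_{\frac{i_0}{2},\frac{j_0}{2}}\right|=\|E^k\|_\infty$, and invoking Lemma \ref{lemma3.8} (strict row diagonal dominance, whence $\sum_{l,r}g^x_{i_0,l}g^y_{j_0,r}<d^x_{i_0}d^y_{j_0}$, with all off-diagonal contributions of one sign since $\mathcal{G}_x,\mathcal{G}_y$ are positive) together with the bound $0<d^x_i d^y_j<C_d$ from Theorem \ref{Theorem3.10}, the diagonal factor $\left(1+\frac{\tau}{2}d^x_{i_0}d^y_{j_0}\right)$ on the left absorbs the $\|E^k\|_\infty$ terms carried to the right, yielding $\|E^k\|_\infty\leq\left(1+\tau C_d\right)\|E^{k-1}\|_\infty+C_R\tau\left(\tau^2+h_x^{4-\gamma}+h_y^{4-\gamma}\right)$.

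Finally, a discrete Gronwall step using $E^0=0$, $\sum_{l=0}^{k-1}(1+\tau C_d)^l\leq k(1+\tau C_d)^k$ and $(1+\tau C_d)^k\leq\exp(TC_d)$ for $k\tau\leq T$ delivers $\|E^k\|_\infty\leq C_R T\exp(TC_d)\left(\tau^2+h_x^{4-\gamma}+h_y^{4-\gamma}\right)$, which is the claimed estimate. The hard part will be the uniform control of the singular spatial truncation error: the estimate of Lemma \ref{lemma3.9} is node-dependent through $\eta_{\frac{i}{2}}^{-\gamma}$ and $\widetilde{\eta}_{\frac{j}{2}}^{-\gamma}$, and it is precisely the geometric fact that the half-nodes remain at distance at least $h/2$ from the boundary that trades the weighted order $h^4\eta^{-\gamma}$ for the global order $h^{4-\gamma}$; once this is in hand, the remainder reduces to the same max-norm and diagonal-dominance machinery already used to prove stability in Theorem \ref{Theorem3.10}.
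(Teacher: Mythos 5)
Your proposal is correct and follows essentially the same route as the paper: the error equation from subtracting \eqref{2.13} from \eqref{2.12}, the uniform bound $\left|R^{k-\frac{1}{2}}_{\frac{i}{2},\frac{j}{2}}\right|\leq C_R\left(\tau^2+h_x^{4-\gamma}+h_y^{4-\gamma}\right)$ via Lemma \ref{lemma3.9}, the relabeling and max-norm/diagonal-dominance argument of Theorem \ref{Theorem3.10} with the bound $0<d^x_id^y_j<C_d$, and the discrete Gronwall step. Your explicit observation that the half-nodes satisfy $\eta_{\frac{i}{2}}\geq h_x/2$, which converts the weighted bound $h_x^4\eta_{\frac{i}{2}}^{-\gamma}$ into the global order $h_x^{4-\gamma}$, is a useful justification of a step the paper states without comment.
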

\begin{proof}
Denote $e_{\frac{i}{2},\frac{j}{2}}^k=u\left(x_{\frac{i}{2}},y_{\frac{j}{2}},t_k\right)-u_{\frac{i}{2},\frac{j}{2}}^k$, and
\begin{equation*}
E_i^k=\left(e_{i,1}^k,e_{i,2}^k,\cdots,e_{i,M_y-1}^k,e_{i,\frac{1}{2}}^k,e_{i,\frac{3}{2}}^k,\cdots,e_{i,M_y-\frac{1}{2}}^k\right),
\ i=1,2,\cdots, M_x-1,\textstyle\frac{1}{2},\frac{3}{2},\cdots,M_x-\frac{1}{2}
\end{equation*}
 Subtracting (\ref{2.13}) from (\ref{2.12}) with ${E}^0={0}$, a zero vector, it yields
\begin{equation*}
\left(I+\frac{\tau}{2}{\mathcal{A}}\right){E}^{k}=\left(I-\frac{\tau}{2}{\mathcal{A}}\right){E}^{k-1}+\tau {R}^{k-\frac{1}{2}},
\end{equation*}
with ${E}^k=\left(E^k_{1},E^k_{2},\cdots,E^k_{M_x-1},E^k_{\frac{1}{2}},E^k_{\frac{3}{2}},\cdots,E^k_{M_x-\frac{1}{2}}\right)^{T}$ and similarly for  ${R}^{k-\frac{1}{2}}$.
Here  the local truncation error is
${R}_{\frac{i}{2},\frac{j}{2}}^{k-\frac{1}{2}}
=\mathcal {O}\!\left(\!\tau^2+h^4\left(\eta_\frac{i}{2}\right)^{-\gamma}\!+\!h^4\left(\widetilde{\eta}_{\frac{j}{2}}\right)^{-\gamma}\!\right)
\leq C_R\left(\tau^2\!+\!h_x^{4-\gamma}+h_y^{4-\gamma}\right)$,
with $\eta_\frac{i}{2}=\min\left\{x_{\frac{i}{2}}-a,b-x_{\frac{i}{2}}\right\}, \widetilde{\eta}_{\frac{j}{2}}=\min\left\{y_{\frac{j}{2}}-c,d-y_{\frac{j}{2}}\right\}$ in \eqref{2.13} and $C_R$ is  a constant.

Upon relabeling and reorienting the  vectors ${E}^k$ and ${R}^{k-\frac{1}{2}}$ as $\widetilde{{E}}^k$ and $\widetilde{{R}}^{k-\frac{1}{2}}$ as in Theorem  \ref{Theorem3.10} again,
then the above equation can be recast as
\begin{equation*}
\left(I+\frac{\tau}{2}\widetilde{{\mathcal{A}}}\right)\widetilde{{E}}^{k}=\left(I-\frac{\tau}{2}\widetilde{{\mathcal{A}}}\right)\widetilde{{E}}^{k-1}+\tau \widetilde{{R}}^{k-\frac{1}{2}},
~~{\rm with}~~\widetilde{{\mathcal{A}}}=\widetilde{\mathcal{D}}_x\otimes \widetilde{\mathcal{D}}_y-\widetilde{\mathcal{G}}_x\otimes \widetilde{\mathcal{G}}_y,
\end{equation*}
where $\widetilde{\mathcal{D}}_x=\left\{ d^x_{i} \right\},\ \widetilde{\mathcal{G}}_x=\left\{ g^x_{i,l} \right\},\ \textstyle i,l =1,2,\ldots,{2M_x-1}$, $\widetilde{\mathcal{D}}_y=\left\{ d^y_{j} \right\},\ \widetilde{\mathcal{G}}_y=\left\{ g^y_{j,r} \right\},\ \textstyle j,r =1,2,\ldots,{2M_y-1}$, i.e.,
\begin{equation*}
\begin{split}
&\left(1+\frac{\tau}{2}d^x_{i}d^y_{j}\right)e_{\frac{i}{2},\frac{j}{2}}^k
-\frac{\tau}{2}\mathop\sum\limits_{l=1}^{2M_x-1}\mathop\sum\limits_{r=1}^{2M_y-1}g^x_{i,l}g^y_{j,r}e_{\frac{l}{2},\frac{r}{2}}^k\\
&\quad=\left(1-\frac{\tau}{2}d^x_{i}d^y_{j}\right)e_{\frac{i}{2},\frac{j}{2}}^{k-1}
+\frac{\tau}{2}\mathop\sum\limits_{l=1}^{2M_x-1}\mathop\sum\limits_{r=1}^{2M_y-1}g^x_{i,l}g^y_{j,r}e_{\frac{l}{2},\frac{r}{2}}^{k-1}
+\tau R_{\frac{i}{2},\frac{j}{2}}^{k-\frac{1}{2}}.
\end{split}
\end{equation*}
Let $\left|e^k_{\frac{i_{0}}{2},\frac{j_{0}}{2}}\right|:=\left|\left|{E}^k\right|\right|_{\infty}=\mathop{\max}\limits_{i,j}\left|e^k_{\frac{i}{2},\frac{j}{2}}\right|$.
Using  Lemmas \ref{lemma3.3} and \ref{lemma3.4}, we get  $0<d_id_j< C_{d}:=\frac{4\left(b-a\right)^{1-\gamma}\left(d-c\right)^{1-\gamma}}{(1-\gamma)^2}$, combine with Theorem \ref{theorem3.7} and Lemma \ref{lemma3.8}, we have
\small\begin{equation*}
\begin{split}
\left(1\!+\!\frac{\tau}{2}d^x_{i_0}d^y_{j_0}\right)\left|\left|{E}^k\right|\right|_{\infty}
\leq&\left|e_{\frac{i_0}{2},\frac{j_0}{2}}^{k-1}\right|
\!+\!\frac{\tau}{2}d^x_{i_0}d^y_{j_0}\left|e_{\frac{i_0}{2},\frac{j_0}{2}}^{k-1}\right|
\!+\!\frac{\tau}{2}\mathop\sum\limits_{l=1}^{2M_x-1}\mathop\sum\limits_{r=1}^{2M_y-1}g^x_{i_0,l}g^y_{j_0,r}\left|e_{\frac{l}{2},\frac{r}{2}}^{k-1}\right|\\
&\!+\!\frac{\tau}{2}\mathop\sum\limits_{l=1}^{2M_x-1}\mathop\sum\limits_{r=1}^{2M_y-1}g^x_{i_0,l}g^y_{j_0,r}\left|e_{\frac{l}{2},\frac{r}{2}}^k\right|
\!+\!\tau\left|R_{\frac{i_0}{2},\frac{j_0}{2}}^{k-\frac{1}{2}}\right|\\
\!\leq\!&\left(1\!+\!\tau C_{d}\right)\left|\left|{E}^{k-1}\right|\right|_\infty
\!\!+\!\frac{\tau}{2}\mathop\sum\limits_{l=1}^{2M_x-1}\mathop\sum\limits_{r=1}^{2M_y-1}\!\!g^x_{i_0,l}g^y_{j_0,r}\left|\left|{E}^k\right|\right|_{\infty}
\!\!\!+\!\tau\left|R_{\frac{i_0}{2},\frac{j_0}{2}}^{k-\frac{1}{2}}\right|,
\end{split}
\end{equation*}\small
which leads to
\begin{equation*}
\begin{split}
\left|\left|{E}^k\right|\right|_\infty
&\leq\left(1+\tau C_{d}\right)\left|\left|{E}^{k-1}\right|\right|_\infty+ C_R \left(\tau^2+h_x^{4-\gamma}+h_y^{4-\gamma}\right)\tau\\
&\leq\left(1+\tau C_{d}\right)^k\left|\left|{E}^{0}\right|\right|_\infty+C_R \left(\tau^2+h_x^{4-\gamma}+h_y^{4-\gamma}\right)\tau\mathop{\sum}\limits^{k-1}_{l=0}\left(1+\tau C_{d}\right)^l\\
&\leq C_R \left(\tau^2+h_x^{4-\gamma}+h_y^{4-\gamma}\right)\tau\mathop{\sum}\limits^{k-1}_{l=0}\left(1+\tau C_{d}\right)^k
\leq C_R T \exp\left(TC_{d}\right)\left(\tau^2+h_x^{4-\gamma}+h_y^{4-\gamma}\right).
\end{split}
\end{equation*}
The proof is completed.
\end{proof}

\subsection{Stability and convergence analysis for 2D with additive Cauchy kernels}
It should be noted that the stiffness matrix \eqref{2.11} with  multiplicative Cauchy kernels can be computed explicitly, but it is not for  additive Cauchy kernels.

\begin{lemma}\label{lemma3.12}
Let $0<\gamma<1$, $u(x,y)\in C^6\left(\left[a,b\right]\times\left[c,d\right]\right)$ and $u_{Q}(x,y)$ be defined by \eqref{2.8}. Then for any $\left(x_{\frac{i}{2}},y_{\frac{j}{2}}\right)\in\left(a,b\right)\times\left(c,d\right)$, there exists
\begin{equation*}
\int^d_c\int^b_a\frac{u\left(x,y\right)-u_{Q}\left(x,y\right)}{\left|\sqrt{\left(x_{\frac{i}{2}}-x\right)^2\!\!+\!\!\left(y_{\frac{j}{2}}-y\right)^2}\right|^{\gamma}}dxdy
=\mathcal{O}\left(h_x^4\left(\eta_{\frac{i}{2}}\right)^{-\gamma}\! +\!h_y^4\left(\widetilde{\eta}_{\frac{j}{2}}\right)^{-\gamma}\right)
+\mathcal{O}\left(h_x^{5-\gamma}\!+\!h_y^{5-\gamma}\right),
\end{equation*}
with $\eta_{\frac{i}{2}}=\min\left\{x_{\frac{i}{2}}-a,b-x_{\frac{i}{2}}\right\},\ \widetilde{\eta}_{\frac{j}{2}}=\min\left\{y_{\frac{j}{2}}-c,d-y_{\frac{j}{2}}\right\}i=1,2\ldots,2M_x-1,\ j=1,2\ldots,2M_y-1.$
\end{lemma}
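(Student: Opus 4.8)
The plan is to follow the proof of Lemma \ref{lemma3.9} verbatim through the interpolation step, and to replace only the treatment of the kernel. First I would write $u(x,y)-u_{Q}(x,y)=E_x+E_y+E_{xy}$ exactly as in Lemma \ref{lemma3.9}, since this Taylor splitting concerns only the piecewise quadratic interpolant $u_{Q}$ defined in \eqref{2.8} and is completely independent of which singular kernel multiplies it; in particular $E_x$ carries a cubic factor in $x$, $E_y$ a cubic factor in $y$, and $E_{xy}$ the product of both together with a bounded sixth-order derivative of $u\in C^6$.

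The one genuinely new ingredient is that the additive kernel does not separate, so the factorization used for the multiplicative kernel in Lemma \ref{lemma3.9} is unavailable. I would instead exploit the two pointwise dominations
\begin{equation*}
\frac{1}{\left|\sqrt{\left(x_{\frac{i}{2}}-x\right)^2+\left(y_{\frac{j}{2}}-y\right)^2}\right|^{\gamma}}
\le \frac{1}{\left|x_{\frac{i}{2}}-x\right|^{\gamma}},
\qquad
\frac{1}{\left|\sqrt{\left(x_{\frac{i}{2}}-x\right)^2+\left(y_{\frac{j}{2}}-y\right)^2}\right|^{\gamma}}
\le \frac{1}{\left|y_{\frac{j}{2}}-y\right|^{\gamma}},
\end{equation*}
which hold because $\left(x_{\frac{i}{2}}-x\right)^2+\left(y_{\frac{j}{2}}-y\right)^2$ is bounded below by each of its two summands and $t\mapsto t^{\gamma/2}$ is increasing. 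The key feature is that each domination retains a full singular weight in exactly one variable.

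I would then bound the three pieces by choosing, for each, the domination whose surviving weight sits on the variable carrying the interpolation error. For $E_x$ I use the $x$-domination, so that
\begin{equation*}
\left|\int^d_c\!\int^b_a \frac{E_x}{\left|\sqrt{\left(x_{\frac{i}{2}}-x\right)^2+\left(y_{\frac{j}{2}}-y\right)^2}\right|^{\gamma}}\,dxdy\right|
\le \int^d_c\!\left(\int^b_a \frac{|E_x|}{\left|x_{\frac{i}{2}}-x\right|^{\gamma}}\,dx\right)dy,
\end{equation*}
where the inner one-dimensional integral is $\mathcal{O}\!\left(h_x^4\left(\eta_{\frac{i}{2}}\right)^{-\gamma}\right)+\mathcal{O}\!\left(h_x^{5-\gamma}\right)$ by Theorem $3.7$ of \cite{CQSW:19}, and the outer $y$-integrand is merely bounded (no weight is needed, since $u^{(3)}_x$ is bounded on $C^6$ and $d-c<\infty$). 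Symmetrically, the $y$-domination applied to $E_y$ produces $\mathcal{O}\!\left(h_y^4\left(\widetilde{\eta}_{\frac{j}{2}}\right)^{-\gamma}\right)+\mathcal{O}\!\left(h_y^{5-\gamma}\right)$. For the mixed term $E_{xy}$ I use either domination, say the $x$-one: the cubic $x$-factor integrated against $\left|x_{\frac{i}{2}}-x\right|^{-\gamma}$ is $\mathcal{O}(h_x^3)$, using that $\int_a^b\left|x_{\frac{i}{2}}-x\right|^{-\gamma}dx$ is bounded as in Lemma \ref{lemma3.4}, while the unweighted cubic $y$-factor is $\mathcal{O}(h_y^3)$, so $E_{xy}$ contributes only the higher-order term $\mathcal{O}(h_x^3 h_y^3)$.

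Summing the three contributions and absorbing $\mathcal{O}(h_x^3 h_y^3)$ into $\mathcal{O}(h_x^{5-\gamma}+h_y^{5-\gamma})$ gives the claimed estimate. The main obstacle is precisely the loss of separability: one must verify that passing to a single-variable weight via the domination inequalities does not degrade the order, i.e. that the variable stripped of its singular weight always appears with a genuine $\mathcal{O}(h^3)$ (or $\mathcal{O}(1)$) smallness coming either from the interpolation error or from a trivially bounded integral, so that the full fourth-order accuracy is preserved in each of $E_x$, $E_y$, and $E_{xy}$.
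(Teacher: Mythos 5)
Your overall strategy --- the Taylor splitting $u-u_Q=E_x+E_y+E_{xy}$ followed by a reduction of the non-separable kernel to a single-variable weight --- is exactly the route the paper takes: its proof writes the kernel as $\left|x_{i/2}-x\right|^{-\gamma}\bigl(1+\bigl(\tfrac{y_{j/2}-y}{x_{i/2}-x}\bigr)^2\bigr)^{-\gamma/2}$ with the second factor lying in $(0,1]$, which is the same domination you state, and it likewise sends $E_{xy}$ into the higher-order remainder. Your handling of $E_{xy}$ and the absorption of $\mathcal{O}(h_x^3h_y^3)$ into $\mathcal{O}(h_x^{5-\gamma}+h_y^{5-\gamma})$ are fine.

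The gap is in the step where you pass to $\int_c^d\bigl(\int_a^b |E_x|\,\left|x_{i/2}-x\right|^{-\gamma}dx\bigr)dy$ and then invoke Theorem 3.7 of \cite{CQSW:19} for the inner integral. That theorem bounds the \emph{signed} integral $\int_a^b E_x\left|x_{i/2}-x\right|^{-\gamma}dx$; its fourth-order rate comes from the cancellation of the cubic node polynomial $(x-x_{\frac{l}{2}})(x-x_{\frac{l+1}{2}})(x-x_{\frac{l}{2}+1})$, which is odd about each panel midpoint and integrates to zero over the panel. Once you put $|E_x|$ inside, each panel contributes a full $\mathcal{O}(h_x^4)$ with no cancellation, and summing over the $\mathcal{O}(h_x^{-1})$ panels against the integrable weight gives only $\mathcal{O}(h_x^3)$ --- the Atkinson-type rate, not $\mathcal{O}\bigl(h_x^4(\eta_{\frac{i}{2}})^{-\gamma}\bigr)+\mathcal{O}(h_x^{5-\gamma})$ --- and the same loss occurs for $E_y$. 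So your chain of inequalities, as written, proves only a third-order statement. To recover the claimed order you must keep $E_x$ signed inside the weighted one-dimensional integral and argue that the extra bounded factor $\bigl(1+\bigl(\tfrac{y_{j/2}-y}{x_{i/2}-x}\bigr)^2\bigr)^{-\gamma/2}$ does not destroy the per-panel cancellation (e.g. by freezing it at panel midpoints and controlling its variation); this is precisely the delicate point, which the paper's own proof also passes over quickly when it writes the bound as a product of the signed one-dimensional integral of Theorem 3.7 and a bounded $y$-integral.
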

\begin{proof}
According to  \eqref{03.1}, Lemma \ref{lemma3.4} and  Theorem $3.7$ in \cite{CQSW:19}, for any $\left(x_{\frac{i}{2}},y_{\frac{j}{2}}\right)\in\left(a,b\right)\times\left(c,d\right)$, $i=1,2\ldots,2M_x-1$, $j=1,2\ldots,2M_y-1$, we have
\small\begin{equation*}
\begin{split}
&\left|\int^d_c\int^b_a\frac{u\left(x,y\right)-u_{Q}\left(x,y\right)}{\left|\sqrt{\left(x_{i/2}-x\right)^2+\left(y_{j/2}-y\right)^2}\right|^{\gamma}}dxdy\right|\\
\!\leq&\left|\sum^{2M_x-2}_{l=0}\int^{x_{\frac{l}{2}+1}}_{x_{\frac{l}{2}}}\frac{E_x}{\left|x_{i/2}-x\right|^{\gamma}}dx
\int^{d}_{c}\frac{1}{\left|\sqrt{1\!+\left(\frac{y_{j/2}-y}{ x_{i/2}-x}\right)^2}\right|^{\gamma}}dy\right|\\
&\!+\left|\int^{b}_{a}\frac{1}{\left|\sqrt{\left(\frac{x_{i/2}-x}{y_{j/2}-y}\right)^2+1}\right|^{\gamma}}dx
\sum^{2M_y-2}_{r=0}\int^{y_{\frac{r}{2}\!+1}}_{y_{\frac{r}{2}}}\frac{E_y+E_{xy}}{|y_{j/2}-y|^\gamma}dy\right|\\
=&\mathcal{O}\left(h_x^4\left(\eta_{\frac{i}{2}}\right)^{-\gamma}\right)\!+\mathcal{O}\left(h_x^{5-\gamma}\right)
+\mathcal{O}\left(h_y^4\left(\widetilde{\eta}_{\frac{j}{2}}\right)^{-\gamma}\right)+\mathcal{O}\left(h_y^{5-\gamma}\right)+\mathcal{O}\left(h_x^3h_y^3\right)
\end{split}
\end{equation*}\small
with
$\eta_{\frac{i}{2}}=\min\left\{x_{\frac{i}{2}}-a,b-x_{\frac{i}{2}}\right\},\ \widetilde{\eta}_{\frac{j}{2}}=\min\left\{y_{\frac{j}{2}}-c,d-y_{\frac{j}{2}}\right\}$.
The proof is completed.
\end{proof}

\begin{theorem}\label{Theorem3.13}
The numerical schemes \eqref{2.20} is unconditionally stable.
\end{theorem}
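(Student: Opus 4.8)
The plan is to follow the template of Theorems~\ref{theorem3.6} and~\ref{Theorem3.10}: reduce unconditional stability of \eqref{2.20} to two structural properties of the additive-kernel matrix $\mathcal{A}=\mathcal{D}-\mathcal{G}$ in \eqref{2.17}, namely that $\mathcal{A}$ is strictly diagonally dominant by rows and that its diagonal entries are positive and uniformly bounded, and then run the infinity-norm contraction argument on the Crank--Nicolson scheme. Concretely, I would set $\epsilon^k_{i/2,j/2}$ to be the perturbation of $u^k_{i/2,j/2}$, subtract two copies of \eqref{2.20} to obtain the homogeneous recursion $(I+\tfrac{\tau}{2}\mathcal{A})\varepsilon^k=(I-\tfrac{\tau}{2}\mathcal{A})\varepsilon^{k-1}$, relabel and reorient the unknowns into $\widetilde{\mathcal{A}}=\{a_{i,j}\}$ exactly as in Theorem~\ref{Theorem3.10}, isolate the index $(i_0,j_0)$ at which $\|\varepsilon^k\|_\infty$ is attained, and bound the explicit and implicit off-diagonal sums by $\|\varepsilon^{k-1}\|_\infty$ and $\|\varepsilon^k\|_\infty$ using $|a_{i,j}|$ together with strict diagonal dominance $\sum_{j\neq i}|a_{i,j}|<a_{i,i}$ and $0<a_{i,i}<C_d$. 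This yields $\|\varepsilon^k\|_\infty\le(1+\tau C_d)\|\varepsilon^{k-1}\|_\infty$ and hence, after iterating, $\|\varepsilon^k\|_\infty\le\exp(TC_d)\|\varepsilon^0\|_\infty$, which is unconditional stability. This last step is entirely routine once the two structural facts are in place.

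The two structural facts must be re-established because, in contrast with the multiplicative kernel, the matrix \eqref{2.17} is \emph{not} a Kronecker product, so neither Lemma~\ref{lemma3.8} nor the one-dimensional positivity result Lemma~\ref{lemma3.3} can be invoked directly. The boundedness of the diagonal is the easy half: the diagonal entry is $\mathcal{D}(i,j)=\int_c^d\int_a^b\delta(x_i-\bar{x},y_j-\bar{y})^{-1}\,d\bar{x}\,d\bar{y}$, and since $0<\gamma<1<2$ the singularity $r^{-\gamma}$ is integrable against the planar area element, so each $\mathcal{D}(i,j)$ is finite, strictly positive and bounded by a constant $C_d$ depending only on $\Omega$ and $\gamma$; this is the additive analogue of Lemma~\ref{lemma3.4}. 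For strict diagonal dominance I would integrate the partition-of-unity identity $\sum_{l=0}^{2M_x}\sum_{r=0}^{2M_y}\phi_{l/2}(\bar{x})\phi_{r/2}(\bar{y})=1$, satisfied by the quadratic Lagrange basis \eqref{2.6}--\eqref{2.7}, against $\delta^{-1}$: this shows that the full sum of the interior coefficients $C_{i,j}^{l,r}$ of \eqref{2.16} plus the boundary contributions equals exactly $\mathcal{D}(i,j)$, so that discarding the boundary terms leaves the interior row sum of $\mathcal{G}$ strictly below $\mathcal{D}(i,j)$, provided the discarded terms have a definite sign and the surviving $C_{i,j}^{l,r}$ do not reverse sign.

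The hard part is precisely this sign control. The coefficients $C_{i,j}^{l,r}=\int\!\!\int \phi_{l/2}(\bar{x})\phi_{r/2}(\bar{y})\,\delta(x_i-\bar{x},y_j-\bar{y})^{-1}\,d\bar{x}\,d\bar{y}$ do not separate into a product of one-dimensional integrals, because the additive kernel $\bigl((x_i-\bar{x})^2+(y_j-\bar{y})^2\bigr)^{\gamma/2}$ is non-separable; moreover the Lagrange factors $\phi_l^{-},\phi_l^{+}$ in \eqref{2.6} change sign within a subinterval, so positivity of $C_{i,j}^{l,r}$ (equivalently $\sum_{j\neq i}|a_{i,j}|=\sum_{j\neq i}(-a_{i,j})$, as used in the contraction) cannot be read off term by term. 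I expect this to be the main obstacle: one must show either that each combined coefficient $C_{i,j}^{l,r}$ remains nonnegative --- an additive analogue of the positivity proven in \cite{CQSW:19} for the one-dimensional blocks $\mathcal{M},\mathcal{N},\mathcal{P},\mathcal{Q}$ --- or, more weakly, that the absolute row sum $\sum_{(l,r)\neq(i,j)}|C_{i,j}^{l,r}|$ stays strictly below $\mathcal{D}(i,j)-C_{i,j}^{i,j}$. The cleanest route is to bound the signed contribution of each sign-changing Lagrange factor using the monotonicity of the radial weight $\delta^{-1}$ toward the collocation point, thereby reducing the two-dimensional estimate to the one-dimensional sign pattern already controlled in Lemma~\ref{lemma3.3}; once the resulting row sums are shown to be dominated by the diagonal, Theorem~\ref{Theorem3.13} follows verbatim from the argument of Theorem~\ref{Theorem3.10}.
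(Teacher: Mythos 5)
Your proposal has a genuine gap at its core step. You correctly observe that the additive-kernel matrix in \eqref{2.17} is not a Kronecker product, so Lemmas~\ref{lemma3.3} and~\ref{lemma3.8} cannot be invoked, and you then reduce the theorem to proving strict diagonal dominance of $\mathcal{A}$, i.e.\ to sign control of the non-separable coefficients $C_{i,j}^{l,r}$. But you never establish this: you candidly flag it as ``the main obstacle'' and offer only a sketch (monotonicity of the radial weight, reduction to the one-dimensional sign pattern) without carrying it out. The partition-of-unity identity only controls the \emph{signed} row sum, and since $\phi_l^{\pm}$ change sign inside each subinterval, the absolute row sum $\sum_{(l,r)\neq(i,j)}|C_{i,j}^{l,r}|$ is not thereby bounded by the diagonal; so the inequality $\sum_{j\neq i}|a_{i,j}|<a_{i,i}$ on which your contraction $\|\varepsilon^k\|_\infty\le(1+\tau C_d)\|\varepsilon^{k-1}\|_\infty$ rests is unproven. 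As written, the argument does not close.

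The paper avoids this difficulty entirely rather than resolving it. It bounds the diagonal by $0<d_{i,j}\le C_d=\frac{2(b-a)(d-c)^{1-\gamma}}{1-\gamma}$ exactly as you do, but for the off-diagonal part it uses only the triangle inequality together with the uniform bound $|\phi_l^{+}|+|\phi_{l+\frac12}|+|\phi_{l+1}^{-}|\le 3$ on each cell, giving $\sum_{l,r}\bigl|g_{i_0,j_0}^{l,r}\bigr|\le 9C_d$ with no sign information whatsoever. This yields the weaker per-step estimate
\begin{equation*}
\left(1-\tfrac{9}{2}\tau C_d\right)\left\|\varepsilon^k\right\|_\infty\le\left(1+5\tau C_d\right)\left\|\varepsilon^{k-1}\right\|_\infty,
\end{equation*}
whose amplification factor exceeds $1$ but still exponentiates to a bounded constant over $[0,T]$ once $\tau<\tau_0=\frac{1}{10C_d}$, a threshold depending only on $\Omega$, $\gamma$ and not on $h$. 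If you could actually prove the diagonal dominance you assume, your route would give a cleaner and genuinely restriction-free bound; but until that positivity (or absolute-row-sum) estimate is supplied, the crude $9C_d$ bound is what makes the theorem provable, and you should adopt it or else fill in the missing lemma.
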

\begin{proof}
Let $\widetilde{u}_{\frac{i}{2},\frac{j}{2}}^k~(i=1,2\ldots,2M_x-1;\,j=1,2\ldots,2M_y-1;\,k=0,1,\ldots,N)$ be the approximate solution of $u_{\frac{i}{2},\frac{j}{2}}^k$,
which is the exact solution of the difference scheme (\ref{2.20}).
Putting $\epsilon_{\frac{i}{2},\frac{j}{2}}^k=\widetilde{u}_{\frac{i}{2},\frac{j}{2}}^k-u_{\frac{i}{2},\frac{j}{2}}^k$, and we denote
\begin{equation*}
\varepsilon_i^k=\left(\epsilon_{i,1}^k,\epsilon_{i ,2}^k,\cdots,\epsilon_{i ,M_y-1}^k,\epsilon_{i,\frac{1}{2}}^k,\epsilon_{i,\frac{3}{2}}^k,\cdots,\epsilon_{i,M_y-\frac{1}{2}}^k\right),
\ i=1,2,\cdots, M_x-1,\textstyle\frac{1}{2},\frac{3}{2},\cdots,M_x-\frac{1}{2},
\end{equation*}
 then using (\ref{2.20}), we obtain the following perturbation equation
\begin{equation*}
\left(I+\frac{\tau}{2}{\mathcal{A}}\right){\varepsilon}^{k}=\left(I-\frac{\tau}{2}{\mathcal{A}}\right){\varepsilon}^{k-1},
\end{equation*}
 with ${\varepsilon}^k=\left(\varepsilon_1^k,\varepsilon_2^k,\cdots,\varepsilon_{M_x-1}^k,\varepsilon_{\frac{1}{2}}^k,\varepsilon_{\frac{3}{2}}^k,\cdots,\varepsilon_{M_x-\frac{1}{2}}^k \right)^T$.
Upon relabeling and reorienting the  vectors ${\varepsilon}^k$ as
\begin{equation*}
\widetilde{{\varepsilon}}^k=\left(\widetilde{\varepsilon}^k_{\frac{1}{2}},\widetilde{\varepsilon}^k_{1},\widetilde{\varepsilon}^k_{\frac{3}{2}},\widetilde{\varepsilon}^k_{2},\cdots,\widetilde{\varepsilon}^k_{M_x-1},\widetilde{\varepsilon}^k_{M_x-\frac{1}{2}}\right)^{T},
\end{equation*}
where
\begin{equation*}
\widetilde{\varepsilon}_i^k=\left(\epsilon_{i,\frac{1}{2}}^k,\epsilon_{i,1}^k,\epsilon_{i,\frac{3}{2}}^k,\epsilon_{i,2}^k,\cdots,\epsilon_{i,M_y-1}^k,\epsilon_{i,M_y-\frac{1}{2}}^k\right),
\ i=\textstyle\frac{1}{2},1,\textstyle\frac{3}{2},2,\cdots, M_x-1, \textstyle M_x-\frac{1}{2},
\end{equation*}
then the above equation can be recast as
 \begin{equation*}
\left(I+\frac{\tau}{2}\widetilde{{\mathcal{A}}}\right)\widetilde{{\varepsilon}}^{k}=\left(I-\frac{\tau}{2}\widetilde{{\mathcal{A}}}\right)\widetilde{{\varepsilon}}^{k-1},~~{\rm with}~~\widetilde{{\mathcal{A}}}=\widetilde{{\mathcal{D}}}-\widetilde{{\mathcal{G}}},
\end{equation*}
where $\widetilde{{\mathcal{D}}}=\left\{{ d}_{i,j} \right\}, \widetilde{{\mathcal{G}}}=\left\{ {g}_{i,j}^{l,r} \right\},\, i,l =1,2,\ldots,{2M_x-1};\, j,r =1,2,\ldots,{2M_y-1}$,
 i.e.,
\begin{equation*}
\left(1+\frac{\tau}{2}{d}_{i,j}\right)\epsilon_{\frac{i}{2},\frac{j}{2}}^k
-\frac{\tau}{2}\mathop\sum\limits_{l=1}^{2M_x-1}\mathop\sum\limits_{r=1}^{2M_y-1}{g}_{i,j}^{l,r}\epsilon_{\frac{l}{2},\frac{r}{2}}^k
=\left(1-\frac{\tau}{2}{d}_{i,j}\right)\epsilon_{\frac{i}{2},\frac{j}{2}}^{k-1}
+\frac{\tau}{2}\mathop\sum\limits_{l=1}^{2M_x-1}\mathop\sum\limits_{r=1}^{2M_y-1}{g}_{i,j}^{l.r}\epsilon_{\frac{l}{2},\frac{r}{2}}^{k-1}.
\end{equation*}
Let $\left|\epsilon^k_{\frac{i_{0}}{2},\frac{j_{0}}{2}}\right|:=\left|\left|{\varepsilon}^k\right|\right|_{\infty}=\mathop{\max}\limits_{i,j}\left|\epsilon^k_{\frac{i}{2},\frac{j}{2}}\right|$. Since
\begin{equation*}
\begin{split}
\int_a^b\int_c^d\!\frac{1}{\left|\sqrt{\left(x_{i/2}-\bar{x}\right)^2\!+\!\left(y_{j/2}-\bar{y}\right)^2}\right|^{\gamma}}d\bar{x}d\bar{y}
&\!=\!\int^{b}_{a}\int^{d}_{c}\!\frac{1}{\left|\sqrt{\left(\frac{x_{i/2}-\bar{x}}{y_{j/2}-\bar{y}}\right)^2\!+\!1}\right|^{\gamma}}\frac{1}{|y_{j/2}\!-\!\bar{y}|^\gamma}d\bar{x}d\bar{y}\\
&\!\leq\!\int^{b}_{a}\int^{d}_{c}\!\frac{1}{|y_{j/2}-\bar{y}|^\gamma}d\bar{x}d\bar{y} \leq C_d,
 \end{split}
 \end{equation*}
with $0\!<{d}_{i,j}\leq C_d=\frac{2(b-a)(d-c)^{1-\gamma}}{(1-\gamma)}$. From \eqref{2.6} and \eqref{2.7}, we have
 \begin{equation*}
 \begin{split}
 &\mathop\sum\limits_{l=1}^{2M_x-1}\mathop\sum\limits_{r=1}^{2M_y-1}\left|{g}_{i_0,j_0}^{l,r}\right|\\
 \leq &\mathop\sum\limits_{l=0}^{2M_x-1}\mathop\sum\limits_{r=0}^{2M_y-1}\left|\int^{x_{l+1}}_{x_l}\int^{y_{r+1}}_{y_r}
 \frac{\left(|\phi^+_{l}|+|\phi_{l+\frac{1}{2}}|+|\phi^-_{l+1}|\right)\left(|\phi^+_{r}|+|\phi_{r+\frac{1}{2}}|+|\phi^-_{r+1}|\right)}
 {\left|\sqrt{\left(x_{i/2}-\bar{x}\right)^2+\left(y_{j/2}-\bar{y}\right)^2}\right|^{\gamma}}d\bar{x}d\bar{y}\right|\\
 \leq&9 \mathop\sum\limits_{l=0}^{2M_x-1}\mathop\sum\limits_{r=0}^{2M_y-1}\left|\int^{x_{l+1}}_{x_l}\int^{y_{r+1}}_{y_r}
 \frac{1}{\left|\sqrt{\left(x_{i/2}-\bar{x}\right)^2+\left(y_{j/2}-\bar{y}\right)^2}\right|^{\gamma}}d\bar{x}d\bar{y}\right|\\
 \leq&9\int_a^b\int_c^d\frac{1}{\left|\sqrt{\left(x_{i/2}-\bar{x}\right)^2+\left(y_{j/2}-\bar{y}\right)^2}\right|^{\gamma}} d\bar{x}d\bar{y}\leq 9C_d.
 \end{split}
 \end{equation*}
Therefore
\begin{equation*}
\begin{split}
&\left(1+\frac{\tau}{2}{d}_{i_0,j_0}\right)\left|\left|{\varepsilon}^k\right|\right|_{\infty}\\
\leq&\left|\epsilon_{\frac{i_0}{2},\frac{j_0}{2}}^{k-1}\right|
+\frac{\tau}{2}{d}_{i_0,j_0}\left|\epsilon_{\frac{i_0}{2},\frac{j_0}{2}}^{k-1}\right|
+\frac{\tau}{2}\mathop\sum\limits_{l=1}^{2M_x-1}\mathop\sum\limits_{r=1}^{2M_y-1}\left|{g}_{i_0,j_0}^{l,r}\right|\left|\epsilon_{\frac{l}{2},\frac{r}{2}}^{k-1}\right|
+\frac{\tau}{2}\mathop\sum\limits_{l=1}^{2M_x-1}\mathop\sum\limits_{r=1}^{2M_y-1}\left|{g}_{i_0,j_0}^{l,r}\right|\left|\epsilon_{\frac{l}{2},\frac{r}{2}}^k\right|\\
\leq&\left(1+\frac{\tau}{2}{d}_{i_0,j_0}\right)\left|\left|{\varepsilon}^{k-1}\right|\right|_{\infty}
\!+\frac{\tau}{2}\mathop\sum\limits_{l=1}^{2M_x-1}\mathop\sum\limits_{r=1}^{2M_y-1}\left|{g}_{i_0,j_0}^{l,r}\right|\left|\left|{\varepsilon}^{k-1}\right|\right|_{\infty}
\!+\frac{\tau}{2}\mathop\sum\limits_{l=1}^{2M_x-1}\mathop\sum\limits_{r=1}^{2M_y-1}\left|{g}_{i_0,j_0}^{l,r}\right|\left|\left|{\varepsilon}^k\right|\right|_{\infty}\\
\leq& \left|\left|{\varepsilon}^{k-1}\right|\right|_{\infty}+5\tau C_d \left|\left|{\varepsilon}^{k-1}\right|\right|_{\infty}+\frac{9}{2}\tau  C_d \left|\left|{\varepsilon}^{k}\right|\right|_{\infty}
\end{split}
\end{equation*}
which leads to
\begin{equation*}
\left|\left|{\varepsilon}^k\right|\right|_{\infty}\leq \frac{1+5\tau C_d}{1-5\tau C_d}\left|\left|{\varepsilon}^{k-1}\right|\right|_\infty
\leq \left( \frac{1+5\tau C_d}{1-5\tau C_d}\right)^k\left|\left|{\varepsilon}^{0}\right|\right|_\infty
\leq \exp\left(\frac{10TC_d}{1-5\tau_0 C_d}\right)\left|\left|{\varepsilon}^0\right|\right|_\infty.
\end{equation*}
with $0<\tau<\tau_0=\frac{1}{10C_d}$. The proof is completed.
\end{proof}

\begin{theorem}\label{Theorem3.14}
Let $u\left(x_{\frac{i}{2}},y_{\frac{j}{2}},t_k\right)$ be the exact solution of (\ref{2.19}) with $0< \gamma < 1$, and $u_{\frac{i}{2},\frac{j}{2}}^k$ the  solution of
the numerical  scheme (\ref{2.20}). Then
\begin{equation*}
  \begin{split}
\left|\left|u\left(x_{\frac{i}{2}},y_{\frac{j}{2}},t_k\right)-u_{\frac{i}{2},\frac{j}{2}}^k\right|\right| =\mathcal {O}\left(\tau^2+h_x^{4-\gamma}+h_y^{4-\gamma}\right),
  \end{split}
  \end{equation*}
with $i=1,2,\ldots,2M_x-1;j=1,2,\ldots,2M_y-1;\,k=0,1,\ldots,N$ and $N\tau\leq T$.
\end{theorem}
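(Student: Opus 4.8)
The plan is to replicate the maximum-norm argument used for Theorem~\ref{Theorem3.11}, but now feeding in the additive-kernel stability estimate of Theorem~\ref{Theorem3.13} and the spatial truncation bound of Lemma~\ref{lemma3.12} in place of their multiplicative-kernel analogues. First I would set $e_{\frac{i}{2},\frac{j}{2}}^k=u\left(x_{\frac{i}{2}},y_{\frac{j}{2}},t_k\right)-u_{\frac{i}{2},\frac{j}{2}}^k$, collect these into the vector $E^k$ and the truncation vector $R^{k-\frac{1}{2}}$ exactly as in Theorem~\ref{Theorem3.13}, and subtract the Crank--Nicolson scheme \eqref{2.20} from \eqref{2.19} sampled at $t_{k-\frac{1}{2}}$. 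Because $E^0=0$, this produces the error recursion
\begin{equation*}
\left(I+\frac{\tau}{2}{\mathcal{A}}\right){E}^{k}=\left(I-\frac{\tau}{2}{\mathcal{A}}\right){E}^{k-1}+\tau {R}^{k-\frac{1}{2}},
\end{equation*}
after which I would perform the same relabeling and reorientation that turns ${\mathcal{A}}$ into $\widetilde{{\mathcal{A}}}=\widetilde{{\mathcal{D}}}-\widetilde{{\mathcal{G}}}$.

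The second step is to bound $R^{k-\frac{1}{2}}$ uniformly in the maximum norm. Crank--Nicolson contributes the $\mathcal{O}(\tau^2)$ temporal defect, while Lemma~\ref{lemma3.12} supplies the spatial defect $\mathcal{O}\left(h_x^4\left(\eta_{\frac{i}{2}}\right)^{-\gamma}+h_y^4\left(\widetilde{\eta}_{\frac{j}{2}}\right)^{-\gamma}\right)+\mathcal{O}\left(h_x^{5-\gamma}+h_y^{5-\gamma}\right)$. Since the smallest interior distance satisfies $\eta_{\frac{i}{2}},\widetilde{\eta}_{\frac{j}{2}}\geq h/2$, the worst boundary node gives $h_x^4\left(\eta_{\frac{i}{2}}\right)^{-\gamma}\leq 2^\gamma h_x^{4-\gamma}$, so that $\left|R_{\frac{i}{2},\frac{j}{2}}^{k-\frac{1}{2}}\right|\leq C_R\left(\tau^2+h_x^{4-\gamma}+h_y^{4-\gamma}\right)$ holds uniformly. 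Taking $\left|e_{\frac{i_0}{2},\frac{j_0}{2}}^k\right|=\|E^k\|_\infty$ and reusing the two structural estimates already established inside Theorem~\ref{Theorem3.13}, namely $0<d_{i,j}\leq C_d$ and $\sum_{l,r}\left|g_{i_0,j_0}^{l,r}\right|\leq 9C_d$, the identical bookkeeping now carries the extra source term and yields
\begin{equation*}
\left(1-5\tau C_d\right)\|E^k\|_\infty\leq\left(1+5\tau C_d\right)\|E^{k-1}\|_\infty+\tau C_R\left(\tau^2+h_x^{4-\gamma}+h_y^{4-\gamma}\right).
\end{equation*}

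The final step is a discrete Gr\"onwall iteration under the same restriction $0<\tau<\tau_0=\frac{1}{10C_d}$ used in Theorem~\ref{Theorem3.13}. Writing $\rho=\frac{1+5\tau C_d}{1-5\tau C_d}$ and iterating from $E^0=0$ gives $\|E^k\|_\infty\leq\frac{\tau C_R}{1-5\tau C_d}\left(\tau^2+h_x^{4-\gamma}+h_y^{4-\gamma}\right)\sum_{l=0}^{k-1}\rho^{l}$; the key cancellation is that $\rho-1=\frac{10\tau C_d}{1-5\tau C_d}$ absorbs the prefactor $\frac{\tau}{1-5\tau C_d}$, so the geometric sum contributes an $\mathcal{O}(1)$ constant times $\rho^k\leq\exp\left(\frac{10TC_d}{1-5\tau_0 C_d}\right)$, leaving $\|E^k\|_\infty=\mathcal{O}\left(\tau^2+h_x^{4-\gamma}+h_y^{4-\gamma}\right)$. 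I expect the main obstacle to be precisely this accounting: unlike the multiplicative case of Theorem~\ref{Theorem3.11}, the additive kernel admits no Kronecker factorization ${\mathcal{D}}_x\otimes{\mathcal{D}}_y-{\mathcal{G}}_x\otimes{\mathcal{G}}_y$, so ${\mathcal{A}}$ is only diagonally dominant through the coarser overlap bound $\sum_{l,r}|g|\leq 9C_d$, which forces the amplification factor $\frac{1+5\tau C_d}{1-5\tau C_d}$ and the step restriction $\tau<\tau_0$. The delicate point is to confirm that, after summation, this factor still telescopes to an $\mathcal{O}(1)$ constant rather than degrading the stated $\mathcal{O}\left(\tau^2+h_x^{4-\gamma}+h_y^{4-\gamma}\right)$ rate.
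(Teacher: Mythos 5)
Your proposal follows essentially the same route as the paper's proof: the same error recursion obtained by subtracting \eqref{2.20} from \eqref{2.19}, the same relabeling, the same use of Lemma \ref{lemma3.12} for the spatial defect and of the structural bounds $0<d_{i,j}\le C_d$ and $\sum_{l,r}\lvert g_{i_0,j_0}^{l,r}\rvert\le 9C_d$ from Theorem \ref{Theorem3.13}, leading to the same amplification factor $\frac{1+5\tau C_d}{1-5\tau C_d}$ under the restriction $\tau<\tau_0=\frac{1}{10C_d}$ and the same final Gr\"onwall-type bound. Your bookkeeping of the geometric sum via $\rho-1=\frac{10\tau C_d}{1-5\tau C_d}$ is marginally cleaner than the paper's $k\tau\le T$ estimate, but the argument is the same.
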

\begin{proof}
Denote $e_{\frac{i}{2},\frac{j}{2}}^k=u\left(x_{\frac{i}{2}},y_{\frac{j}{2}},t_k\right)-u_{\frac{i}{2},\frac{j}{2}}^k$, and
\begin{equation*}
E_i^k=\left(e_{i,1}^k,e_{i,2}^k,\cdots,e_{i,M_y-1}^k,e_{i,\frac{1}{2}}^k,e_{i,\frac{3}{2}}^k,\cdots,e_{i,M_y-\frac{1}{2}}^k\right),
\ i=1,2,\cdots, M_x-1,\textstyle\frac{1}{2},\frac{3}{2},\cdots,M_x-\frac{1}{2}
\end{equation*}
 Subtracting (\ref{2.20}) from (\ref{2.19}) with ${E}^0={0}$, a zero vector, it yields
\begin{equation*}
\left(I+\frac{\tau}{2}{\mathcal{A}}\right){E}^{k}=\left(I-\frac{\tau}{2}{\mathcal{A}}\right){E}^{k-1}+\tau{ R}^{k-\frac{1}{2}},
\end{equation*}
with ${E}^k=\left(E^k_{1},E^k_{2},\cdots,E^k_{M_x-1},E^k_{\frac{1}{2}},E^k_{\frac{3}{2}},\cdots,E^k_{M_x-\frac{1}{2}}\right)^{T}$ and similarly for  ${R}^{k-\frac{1}{2}}$.
The local truncation error is
${R}_{\frac{i}{2},\frac{j}{2}}^{k-\frac{1}{2}}
=\mathcal {O}\left(\tau^2+h^4\left(\eta_\frac{i}{2}\right)^{-\gamma}+h^4\left(\widetilde{\eta}_{\frac{j}{2}}\right)^{-\gamma}\right)
\leq C_R\left(\tau^2+h_x^{4-\gamma}+h_y^{4-\gamma}\right)$,
with $\eta_\frac{i}{2}=\min\left\{x_{\frac{i}{2}}-a,b-x_{\frac{i}{2}}\right\}, \widetilde{\eta}_{\frac{j}{2}}=\min\left\{y_{\frac{j}{2}}-c,d-y_{\frac{j}{2}}\right\}$ in \eqref{2.20} and $C_R$ is  a constant.

Upon relabeling and reorienting the  vectors ${E}^k$ and ${R}^{k-\frac{1}{2}}$ as $\widetilde{{E}}^k$ and $\widetilde{{R}}^{k-\frac{1}{2}}$ as in Theorem  \ref{Theorem3.13} again,
then the above equation can be recast as
\begin{equation*}
\left(I+\frac{\tau}{2}\widetilde{{\mathcal{A}}}\right)\widetilde{{E}}^{k}=\left(I-\frac{\tau}{2}\widetilde{{\mathcal{A}}}\right)\widetilde{{E}}^{k-1}+\tau \widetilde{{R}}^{k-\frac{1}{2}},
~~{\rm with}~~\widetilde{{\mathcal{A}}}=\widetilde{{\mathcal{D}}}-\widetilde{{\mathcal{G}}},
\end{equation*}
where $\widetilde{{\mathcal{D}}}=\left\{{ d}_{i,j} \right\}, \widetilde{{\mathcal{G}}}=\left\{ {g}_{i,j}^{l,r} \right\},\, i,l =1,2,\ldots,{2M_x-1};\, j,r =1,2,\ldots,{2M_y-1}$,
 i.e.,
\begin{equation*}
\begin{split}
&\left(1+\frac{\tau}{2}{d}_{i,j}\right)e_{\frac{i}{2},\frac{j}{2}}^k
-\frac{\tau}{2}\mathop\sum\limits_{l=1}^{2M_x-1}\mathop\sum\limits_{r=1}^{2M_y-1}{g}_{i,j}^{l,r}e_{\frac{l}{2},\frac{r}{2}}^k\\
&\quad=\left(1-\frac{\tau}{2}{d}_{i,j}\right)e_{\frac{i}{2},\frac{j}{2}}^{k-1}
+\frac{\tau}{2}\mathop\sum\limits_{l=1}^{2M_x-1}\mathop\sum\limits_{r=1}^{2M_y-1}{g}_{i,j}^{l.r}e_{\frac{l}{2},\frac{r}{2}}^{k-1}
+\tau R_{\frac{i}{2},\frac{j}{2}}^{k-\frac{1}{2}}.
\end{split}
\end{equation*}
Let $\left|e^k_{\frac{i_{0}}{2},\frac{j_{0}}{2}}\right|:=||{E}^k||_{\infty}=\mathop{\max}\limits_{i,j}\left|e^k_{\frac{i}{2},\frac{j}{2}}\right|$
with  $0\!<{d}_{i,j}\leq C_d$ in Theorem \ref{Theorem3.13}. Then  we have
\small\begin{equation*}
\begin{split}
\left(1+\frac{\tau}{2}{d}_{i_0,j_0}\right)||{E}^k||_{\infty}
\leq&\left|e_{\frac{i_0}{2},\frac{j_0}{2}}^{k-1}\right|
+\frac{\tau}{2}{d}_{i_0,j_0}\left|e_{\frac{i_0}{2},\frac{j_0}{2}}^{k-1}\right|
+\frac{\tau}{2}\mathop\sum\limits_{l=1}^{2M_x-1}\mathop\sum\limits_{r=1}^{2M_y-1}{g}_{i_0,j_0}^{l,r}\left|e_{\frac{l}{2},\frac{r}{2}}^{k-1}\right|\\
&+\frac{\tau}{2}\mathop\sum\limits_{l=1}^{2M_x-1}\mathop\sum\limits_{r=1}^{2M_y-1}{g}_{i_0,j_0}^{l,r}\left|e_{\frac{l}{2},\frac{r}{2}}^k\right|
+\tau\left|R_{\frac{i_0}{2},\frac{j_0}{2}}^{k-\frac{1}{2}}\right|\\
\leq&||{E}^{k-1}||_{\infty}
+\frac{\tau}{2}{d}_{i_0,j_0}||{E}^{k-1}||_{\infty}
+\frac{\tau}{2}\mathop\sum\limits_{l=1}^{2M_x-1}\mathop\sum\limits_{r=1}^{2M_y-1}{g}_{i_0,j_0}^{l,r}||{E}^{k-1}||_{\infty}\\
&+\frac{\tau}{2}\mathop\sum\limits_{l=1}^{2M_x-1}\mathop\sum\limits_{r=1}^{2M_y-1}{g}_{i_0,j_0}^{l,r}||{E}^k||_{\infty}
+\tau\left|R_{\frac{i_0}{2},\frac{j_0}{2}}^{k-\frac{1}{2}}\right|\\
\leq& ||{E}^{k-1}||_{\infty}+5\tau C_d ||{E}^{k-1}||_{\infty}+\frac{9}{2}\tau  C_d ||{E}^{k}||_{\infty}+\tau\left|R_{\frac{i_0}{2},\frac{j_0}{2}}^{k-\frac{1}{2}}\right|,
\end{split}
\end{equation*}\small
which leads to
\begin{equation*}
\begin{split}
\left|\left|{E}^k\right|\right|_\infty
&\leq\left(\frac{1+5\tau C_d}{1-5\tau C_d}\right)\left|\left|{E}^{k-1}\right|\right|_\infty+ C_R \left(\tau^2+h_x^{4-\gamma}+h_y^{4-\gamma}\right)\tau\\
&\leq\left(\frac{1+5\tau C_d}{1-5\tau C_d}\right)^k\left|\left|{E}^{0}\right|\right|_\infty+C_R \left(\tau^2+h_x^{4-\gamma}+h_y^{4-\gamma}\right)\tau\mathop{\sum}\limits^{k-1}_{l=0}\left(\frac{1+5\tau C_d}{1-5\tau C_d}\right)^l\\
&\leq C_R \left(\tau^2+h_x^{4-\gamma}+h_y^{4-\gamma}\right)\tau\mathop{\sum}\limits^{k-1}_{l=0}\left(\frac{1+5\tau C_d}{1-5\tau C_d}\right)^k\\
&\leq C_R T \exp\left(\frac{10T C_d}{1-5\tau_0 C_d}\right)\left(\tau^2+h_x^{4-\gamma}+h_y^{4-\gamma}\right),
\end{split}
\end{equation*}
with $\ 0<\tau<\tau_0=\frac{1}{10C_d}$. The proof is completed.
\end{proof}

\section{Fast Conjugate Gradient Squared for nonsymmetric and indefinite linear systems}\label{Algorithms}
In this section, we develop  fast  Conjugate Gradient Squared algorithm to solve the resulting nonsymmetric and indefinite linear systems including rectangular matrices.
\subsection{The operation count and storage requirement}
To the best of our knowledge, most of the early works on fast Toeplitz solvers were focused on squared matrices by Fast fourier transform (FFT) \cite{Chan:07,ChanNg:96}.
Based on the idea of \cite{Chan:07,CWCD:14,CD:17,Pang:12,Wang:12}, we develop a fast algorithm for the rectangular matrices $\mathcal{P}$ and $\mathcal{Q}$, which  realizes  the computational count $\mathcal{O}(M \log  M)$ and the required storage $\mathcal{O}(M)$.
Let
\begin{equation*}
T_{M-1}=\left[
\begin{array}{ccccc}
t_0        & t_1     & t_2       & \cdots    & t_{M-2}\\
t_{-1}   &t_0      & t_1       &\ddots      &\vdots\\
t_{-2}   &t_{-1} &t_0        &\ddots       &t_2\\
\vdots   &\ddots  &\ddots   &\ddots   &t_1\\
t_{2-M} &\cdots  &t_{-2} &t_{-1}  &t_0\\
\end{array}\right].
\end{equation*}
Then, for any $\left(M-1\right)$-by-$1$   vector $\bf x$, the multiplication $T_{M-1}\bf x$ can also be computed by FFTs with  the computational count $\mathcal{O}(M \log  M)$  \cite[p.\,12]{Chan:07}. More concretely, we take a $2\left(M-1\right)$-by-$2\left(M-1\right)$
circulant matrix with $T_{M-1}$ embedded inside as follows:
\begin{equation*}
\left[\begin{array}{cc}T_{M-1} & \ast\\ \ast & T_{M-1}\end{array}\right]\left[\begin{array}{c}\bf x\\ \bf 0\end{array}\right]=\left[\begin{array}{c}T_{M-1}\bf x\\ \ddag\end{array}\right].
\end{equation*}
Therefore, we can develop this idea to compute the  rectangular matrices $\mathcal{P}_{M\times(M-1)}$ and $\mathcal{Q}_{(M-1)\times M}$.
More precisely,  we first embed $\mathcal{P}_{M\times(M-1)}$  of \eqref{2.3} into a $M$-by-$M$ Toeplitz matrix, i.e,
\begin{equation*}
\begin{split}
\mathcal{\widetilde{P}}=\left [ \begin{matrix}
p_{0}              & p_{1}              & p_{2}             &     \cdots & p_{M-3}   & p_{M-2}   &0          \\
p_{0}              & p_{0}              & p_{1}             &     \ddots & \ddots   & p_{M-3}   &p_{M-2}     \\
p_{1}              & p_{0}              & p_{0}             &     \ddots & \ddots   & \ddots   &p_{M-3}      \\
\vdots             & \ddots             &  \ddots           &     \ddots & \ddots    & p_{2}    & \vdots       \\
p_{M-4}            & \ddots            & \ddots           &     \ddots & p_{0}     & p_{1}     & p_{2}         \\
p_{M-3}            & p_{M-4}            & \ddots           &     p_{1} & p_{0}     & p_{0}     & p_{1}          \\
p_{M-2}            & p_{M-3}            & p_{M-4}           &     \cdots & p_{1}     & p_{0}     & p_{0}
 \end{matrix}
 \right ]_{M \times M}.
\end{split}
\end{equation*}
Then the multiplication $\mathcal{\widetilde{P}}\widetilde{\bf x}$ can also be computed by FFTs with  the computational count $\mathcal{O}(M \log M)$, i.e.,
\begin{equation*}
\left[\begin{array}{cc}\mathcal{\widetilde{P}} & \ast\\ \ast & \mathcal{\widetilde{P}}\end{array}\right]\left[\begin{array}{c}\widetilde{\bf x}\\ \bf 0\end{array}\right]
=\left[\begin{array}{c}\mathcal{\widetilde{P}}\widetilde{\bf x}\\ \ddag\end{array}\right]
=\left[\begin{array}{c}\mathcal{P}\bf x\\ \ddag\end{array}\right],~~\widetilde{x}=\left[\begin{array}{c}\bf x\\ 0\end{array}\right]_{M\times 1}.
\end{equation*}
On the other hand,  we embed $\mathcal{Q}_{(M-1)\times M}$ of \eqref{2.3} into the following  $M$-by-$M$ Toeplitz matrix,
\begin{equation*}
\begin{split}
\mathcal{\widetilde{Q}}=\left [ \begin{matrix}
q_{0}              & q_{0}              & q_{1}             &     \cdots & q_{M-4}   & q_{M-3}   & q_{M-2}      \\
q_{1}              & q_{0}              & q_{0}             &     \ddots & \ddots   & q_{M-4}   & q_{M-3}       \\
q_{2}              & q_{1}              & q_{0}             &     \ddots & \ddots   & \ddots   & q_{M-4}        \\
\vdots             & \ddots             &  \ddots           &     \ddots & \ddots    &  q_{1}    & \vdots           \\
q_{M-3}            & \ddots             & \ddots           &     \ddots & q_{0}     & q_{0}     &  q_{1}            \\
q_{M-2}            & q_{M-3}            & \ddots            & q_{2}  & q_{1}     & q_{0}     &  q_{0}            \\
0                  & q_{M-2}            & q_{M-3}           &     \cdots & q_{2}   & q_{1}     & q_{0}
 \end{matrix}
 \right ]_{M \times M}
\end{split}.
\end{equation*}
Hence the multiplication $\mathcal{\widetilde{Q}}\widetilde{\bf x}$ can also be computed by FFTs with  the computational count $\mathcal{O}(M \log M)$,
\begin{equation*}
\left[\begin{array}{cc}\mathcal{\widetilde{Q}} & \ast\\ \ast & \mathcal{\widetilde{Q}}\end{array}\right]\left[\begin{array}{c}\widetilde{\bf x}\\ \bf 0\end{array}\right]
=\left[\begin{array}{c}\mathcal{\widetilde{Q}}\widetilde{\bf x}\\ \ddag\end{array}\right],~~{\rm and}~~
\mathcal{\widetilde{Q}}\widetilde{\bf x}=\left[\begin{array}{c}\mathcal{Q}\bf \widetilde{x}\\ \dag\end{array}\right] ~~{\rm with }~~\dag \in \mathbb{R}.
\end{equation*}
Then, for  the matrix $\mathcal{A}$ of \eqref{2.3},  we only need to store   $4M$ parameters, instead of the full matrix $\mathcal{A}$ which has $4M^2$ parameters,
i.e., the required storage $\mathcal{O}(M)$. From fast Conjugate Gradient Squared Algorithm \ref{cgs1}  within finite iterations, see \cite{Saad:03,Sonneveld:89}, we have
the computational count $\mathcal{O}(M \log M)$. See Algorithm \ref{cgs1} in Appendix B. Two-dimensional  cases  can be similarly studied.

\subsection{Fast CGS  for nonsymmetric  indefinite linear systems  with rectangular matrices in 1D}
Let $U=\left[\begin{array}{l} w \\ v \end{array}\right]$ with $w=\left(u_{1},u_{2},\cdots,u_{M-1}\right)^{T}$,
and $v=\left(u_{\frac{1}{2}},u_{\frac{3}{2}},\cdots,u_{M-\frac{1}{2}}\right)^{T}$;
and similarly for  $F+K=\left[\begin{array}{l} F_w  \\ F_v  \end{array}\right]$. Then we can rewrite \eqref{2.3} as  the following  general linear system
\begin{equation*}
  \mathcal{A}\left[\begin{array}{l} w \\ v \end{array}\right]=\left[\begin{array}{l} F_w  \\ F_v  \end{array}\right],
\end{equation*}
and  employ the following fast Conjugate Gradient Squared Algorithm \ref{cgs1} to solve the steady-state nonlocal problems  (\ref{2.3});
and  Algorithms \ref{cgs1}-\ref{cgs2} in Appendix B to solve the time-dependent  nonlocal problems  (\ref{2.4}).

\subsection{Fast CGS  for 2D  nonlocal problems with  multiplicative Cauchy kernel }
From \eqref{2.10}, we have the grid functions
\begin{equation}\label{4.1}
\begin{split}
{U}
&=\left( U_1,U_2,\cdots,U_{M_x-1},U_{\frac{1}{2}},U_{\frac{3}{2}},\cdots,U_{M_x\!-\frac{1}{2}} \right)^T,\\
U_i
&=\left(u_{i,1},u_{i,2},\ldots,u_{i,M_y-1},u_{i,\frac{1}{2}},u_{i,\frac{3}{2}},\ldots,u_{i, M_y-\frac{1}{2}}\right),
\end{split}
\end{equation}
with $ i=1,2,\ldots, M_x-1,\textstyle\frac{1}{2},\frac{3}{2},\cdots,M_x-\frac{1}{2}$, then denote the matrix
 $${U}_{Mat}=\left( U_1^T,U_2^T,\cdots,U_{M_x-1}^T,U_{\frac{1}{2}}^T,U_{\frac{3}{2}}^T,\cdots,U_{M_x\!-\frac{1}{2}}^T \right).$$
Thus we first   employ the  fast Fourier transform transform   Algorithm \ref{mat-vec2} to compute the ${\mathcal{A}}{U}$ with ${\mathcal{A}}=\mathcal{D}_x\otimes \mathcal{D}_y - \mathcal{G}_x\otimes \mathcal{G}_y$ in  \eqref{2.11}.
Based on Algorithm \ref{mat-vec2},   we  use Algorithm \ref{cgs3} to solve the steady-state nonlocal problems  \eqref{2.11}
and  Algorithm \ref{cgs2} to solve the time-dependent  nonlocal problems  \eqref{2.13}.  See Algorithm \ref{cgs2}-\ref{cgs3} in Appendix B.

\subsection{Fast CGS  for 2D nonlocal problems with  additive Cauchy kernel nonlocal}
From \eqref{2.17}, we have  ${\mathcal{G}}=\left(\begin{array}{cc}{\mathcal{M}} & {\mathcal{Q}}\\ {\mathcal{P}} & {\mathcal{N}}\end{array}\right)$, and each block of ${\mathcal{M}}$ with $i,l=1,2,\cdots, M_x-1$ in the form of
\begin{equation*}
{\mathcal{M}}_{i,l}=\left(\begin{array}{cccc}
{\mathcal{M}}^{\mathcal{M}}_{i,l} & {\mathcal{M}}^{\mathcal{Q}}_{i,l} \\[2mm]
{\mathcal{M}}^{\mathcal{P}}_{i,l} & {\mathcal{M}}^{\mathcal{N}}_{i,l}
\end{array}\right),
\end{equation*}
see \eqref{2.17} and \eqref{2.18}. Similarly, we have ${\mathcal{Q}}_{i,l}$, ${\mathcal{P}}_{i,l}$ and ${\mathcal{N}}_{i,l}$ with different $i,\,l$, i.e.,
\begin{flalign*}
\begin{split}
\qquad\qquad &{\mathcal{Q}}_{i,l},\ {\rm for}\ i=1,2,\cdots, M_x-1, l=\textstyle\frac{1}{2},\frac{3}{2},\cdots,M_x-\frac{1}{2},\\[2mm]
\qquad\qquad &{\mathcal{P}}_{i,l},\ {\rm for}\ i=\textstyle\frac{1}{2},\frac{3}{2},\cdots,M_x-\frac{1}{2}, l=1,2,\cdots, M_x-1,\\[2mm]
\qquad\qquad &{\mathcal{N}}_{i,l},\ {\rm for}\ i=\textstyle\frac{1}{2},\frac{3}{2},\cdots,M_x-\frac{1}{2}, l=\textstyle\frac{1}{2},\frac{3}{2},\cdots,M_x-\frac{1}{2}.
\end{split}&&
\end{flalign*}
Let ${U}$ in \eqref{4.1} denote as ${U}=[{U}_L,{U}_R]^T$ with
$${U}_L=\left( U_1,U_2,\cdots,U_{M_x-1}\right)\ {\rm and }\ {U}_R=\left(U_{\frac{1}{2}},U_{\frac{3}{2}},\cdots,U_{M_x\!-\frac{1}{2}}\right).$$  Then
\begin{equation*}
{\mathcal{G}}{U}=\left(\begin{array}{c}
{\mathcal{M}}{U}_L^T + {\mathcal{Q}}{U}_R^T\\[2mm]
{\mathcal{P}}{U}_L^T + {\mathcal{N}}{U}_R^T
\end{array}\right).
\end{equation*}
Based on the block-Toeplitz-Toeplitz-block-like structural properties of ${\mathcal{M}}$, ${\mathcal{Q}}$, ${\mathcal{P}}$ and ${\mathcal{N}}$, we design fast algorithms for computing ${\mathcal{M}}{U}_L^T$ as an example.

First, to simplify the notation, let
 \begin{equation*}
{\mathcal{M}}_{i,l}=\left(\begin{array}{cccc}
{\mathcal{M}}^{\mathcal{M}}_{i,l} & {\mathcal{M}}^{\mathcal{Q}}_{i,l} \\[2mm]
{\mathcal{M}}^{\mathcal{P}}_{i,l} & {\mathcal{M}}^{\mathcal{N}}_{i,l}
\end{array}\right)
=\left(\begin{array}{cccc}
T_{\mathcal{M}} & T_{\mathcal{Q}} \\[2mm]
T_{\mathcal{P}} & T_{\mathcal{N}}
\end{array}\right),
\end{equation*}
where $T_{\mathcal{M}}$ and $ T_{\mathcal{N}}$ are squared Toeplitz matrix with the size of $\left(M_y-1\right)\times\left(M_y-1\right)$ and $M_y\times M_y$ respectively, $T_{\mathcal{Q}}$ with the size of $\left(M_y-1\right)\times M_y$ and $T_{\mathcal{P}}$ with the size of  $M_y\times\left(M_y-1\right)$ are rectangular ones. Then  embed $T_{\mathcal{M}}$, $ T_{\mathcal{Q}}$ and $T_{\mathcal{P}}$ into $M_y$-by-$M_y$ squared Toeplitz matrices and still denote $T_{\mathcal{M}}$, $ T_{\mathcal{Q}}$ and $T_{\mathcal{P}}$.
Next we embed the above  four $M_y$-by-$M_y$ Toeplitz matrices into a big circulant matrix, that is, construct a big circulant matrix ${{\mathcal{R}}}^{{\mathcal{M}}}_{i,l}$ with $T_{\mathcal{M}}$, $T_{\mathcal{Q}}$, $T_{\mathcal{P}}$ and $T_{\mathcal{N}}$ as follows:

\begin{equation}\label{4.2}
{{\mathcal{R}}}^{{\mathcal{M}}}_{i,l}=\left(\begin{array}{ccccccccc}
 S_1 & T_{\mathcal{M}} & S_2 & T_{\mathcal{Q}} & S_3 & T_{\mathcal{P}} & S_4 & T_{\mathcal{N}} & S_5 \\[2mm]
 S_5 & S_1 & T_{\mathcal{M}} & S_2 & T_{\mathcal{Q}} & S_3 & T_{\mathcal{P}} & S_4 & T_{\mathcal{N}} \\[2mm]
 T_{\mathcal{N}} & S_5 & S_1 & T_{\mathcal{M}} & S_2 & T_{\mathcal{Q}} & S_3 & T_{\mathcal{P}} & S_4 \\[2mm]
  S_4 & T_{\mathcal{N}} & S_5 & S_1 & T_{\mathcal{M}} & S_2 & T_{\mathcal{Q}} & S_3 & T_{\mathcal{P}} \\[2mm]
 T_{\mathcal{P}} &  S_4 & T_{\mathcal{N}} & S_5 & S_1 & T_{\mathcal{M}} & S_2 & T_{\mathcal{Q}} & S_3 \\[2mm]
  S_3 & T_{\mathcal{P}} &  S_4 & T_{\mathcal{N}} & S_5 & S_1 & T_{\mathcal{M}} & S_2 & T_{\mathcal{Q}} \\[2mm]
   T_{\mathcal{Q}} &  S_3 & T_{\mathcal{P}} &  S_4 & T_{\mathcal{N}} & S_5 & S_1 & T_{\mathcal{M}} & S_2 \\[2mm]
 S_2 &  T_{\mathcal{Q}} &  S_3 & T_{\mathcal{P}} &  S_4 & T_{\mathcal{N}} & S_5 & S_1 & T_{\mathcal{M}} \\[2mm]
T_{\mathcal{M}} &S_2 & T_{\mathcal{Q}} & S_3 & T_{\mathcal{P}} & S_4 & T_{\mathcal{N}} & S_5 & S_1
\end{array}\right),
\end{equation}
where $S_1$ with the size of  $M_y\times M_y$ is squared Toeplitz matrix, which  can be constructed by the partial entries of the first column of $T_{\mathcal{M}}$ and $0$ denotes the number zero, i.e.
\small\begin{equation*}
S_1\!=\!\left(\!\!\!\!\begin{array}{cccccc}
 0                       &T_{\mathcal{M}}(M_y,1)   & T_{\mathcal{M}}(M_y-1,1)  & \cdots   &T_{\mathcal{M}}(3,1)   & T_{\mathcal{M}}(2,1)\\[2mm]
T_{\mathcal{M}}(2,1)        &0                       & T_{\mathcal{M}}(M_y,1)     &\ddots    &\ddots                        &T_{\mathcal{M}}(3,1)\\[2mm]
T_{\mathcal{M}}(3,1)        & T_{\mathcal{M}}(2,1)      &0                          &\ddots   &\ddots                         &\vdots \\[2mm]
\vdots                             & \ddots                            &\ddots                                &\ddots   &\ddots                         &T_{\mathcal{M}}(M_y-1,1)\\[2mm]
T_{\mathcal{M}}(M_y-1,1) &\ddots                             &\ddots                               &\ddots   &0                    &T_{\mathcal{M}}(M_y,1) \\[2mm]
T_{\mathcal{M}}(M_y,1)    & T_{\mathcal{M}}(M_y-1,1)&\cdots                               &\cdots   &T_{\mathcal{M}}(2,1)   &0
\end{array}\!\!\!\!\right),
\end{equation*}\small
and $S_2$ with the size of $M_y\times M_y$ can be constructed by the partial entries of  the last column of $T_{\mathcal{M}}$ and the first column of $T_{\mathcal{Q}}$, for $S_2$ is between $T_{\mathcal{M}}$ and $T_{\mathcal{Q}}$ in ${{\mathcal{R}}}^{{\mathcal{M}}}_{i,l}$,  and $0$ denotes the number zero, i.e.,
\small\begin{equation*}
S_2\!=\!\left(\!\!\!\begin{array}{cccccc}
0                            & T_{\mathcal{Q}}(M_y,1)      & \cdots                             & \cdots        &T_{\mathcal{Q}}(3,1)        &T_{\mathcal{Q}}(2,1) \\[2mm]
T_{\mathcal{M}}(1,M_y)        &0                           & T_{\mathcal{Q}}(M_y,1)    &\ddots         &\ddots                             &T_{\mathcal{Q}}(3,1) \\[2mm]
T_{\mathcal{M}}(2,M_y)        &T_{\mathcal{M}}(1,M_y)       & 0                      &\ddots         &\ddots                             &\vdots \\[2mm]
\vdots                                 & \ddots                                &\ddots                               &\ddots        &\ddots                             & \vdots \\[2mm]
T_{\mathcal{M}}(M_y\!-\!2,M_y) &\ddots                                 & \ddots                             &\ddots         &0                       & T_{\mathcal{Q}}(M_y,1) \\[2mm]
T_{\mathcal{M}}(M_y\!-\!1,M_y) &T_{\mathcal{M}}(M_y\!-\!2,M_y) & \cdots                             & \cdots        & T_{\mathcal{M}}(1,M_y)  &0
\end{array}\!\!\!\!\right),
\end{equation*}\small
and similarly, we can denote the $M_y$-by-$M_y$ Toeplitz matrices $S_3$, $S_4$ and $S_5$ as $S_2$, for they are between $T_{\mathcal{Q}}$, $T_{\mathcal{P}}$ and $T_{\mathcal{N}}$.  Since ${{\mathcal{R}}}^{{\mathcal{M}}}_{i,l}$ is the Toeplitz matrix with the size of $9M_y\times 9M_y$, that means the vector $U_l$ $(l=1,2,\cdots,M_x-1)$ in ${U}_L$ are also regularly expanded into a vector with the length of $9M_y$.

 Then we can construct circulant matrices for  each block of ${\mathcal{M}}$, and denote ${{\mathcal{R}}}^{{\mathcal{M}}}_{i,l}$ as the resulting circulant matrix of ${\mathcal{M}}_{i,l}$. Thus, from ${\mathcal{M}}$ in \eqref{2.17}, by replacing ${\mathcal{M}}_{i,l}$ with ${{\mathcal{R}}}^{{\mathcal{M}}}_{i,l}$, we have the following block-Toeplitz-circulant-block (BTCB) matrix \cite{ChanNg:96,DuW:15}
\begin{equation*}
{{\mathcal{R}}}_{{\mathcal{M}}}=\left(\begin{array}{ccccc}
{{\mathcal{R}}}^{{\mathcal{M}}}_{1,1} & {{\mathcal{R}}}^{{\mathcal{M}}}_{1,2} & {{\mathcal{R}}}^{{\mathcal{M}}}_{1,3} & \cdots                                   &{{\mathcal{R}}}^{{\mathcal{M}}}_{1,M_x-1}\\[2mm]
{{\mathcal{R}}}^{{\mathcal{M}}}_{1,2} & {{\mathcal{R}}}^{{\mathcal{M}}}_{1,1} & {{\mathcal{R}}}^{{\mathcal{M}}}_{1,2} & \ddots                                   &\vdots\\[2mm]
{{\mathcal{R}}}^{{\mathcal{M}}}_{1,3} & {{\mathcal{R}}}^{{\mathcal{M}}}_{1,2} & {{\mathcal{R}}}^{{\mathcal{M}}}_{1,1} & \ddots                                   &{{\mathcal{R}}}^{{\mathcal{M}}}_{1,3}\\[2mm]
\vdots                                   & \ddots                                    & \ddots                                   & \ddots                                   & {{\mathcal{R}}}^{{\mathcal{M}}}_{1,2}\\[2mm]
{{\mathcal{R}}}^{{\mathcal{M}}}_{1,M_x-1} & \cdots                            & {{\mathcal{R}}}^{{\mathcal{M}}}_{1,3}  &{{\mathcal{R}}}^{{\mathcal{M}}}_{1,2} &{{\mathcal{R}}}^{{\mathcal{M}}}_{1,1}
\end{array}\right).
\end{equation*}
Similarly, we can obtain the BTCB matrix ${{\mathcal{R}}}_{{\mathcal{Q}}}$, ${{\mathcal{R}}}_{{\mathcal{P}}}$ and ${{\mathcal{R}}}_{{\mathcal{N}}}$ from ${\mathcal{Q}}$, ${\mathcal{P}}$ and ${\mathcal{N}}$ in \eqref{2.17}. The matrix ${{\mathcal{R}}}_{{\mathcal{M}}} $ can also be embedded into a BCCB matrix $\textbf{C}_{{\mathcal{M}}}$ with the size of $18M_y\left(M_x-1\right)\times 18M_y\left(M_x-1\right)$ as follows:
  \begin{equation*}
\textbf{C}_{{\mathcal{M}}}=\left(\begin{array}{cc}
 {{\mathcal{R}}}_{{\mathcal{M}}} & \widetilde{{{\mathcal{R}}}_{{\mathcal{M}}}}\\[2mm]
 \widetilde{{{\mathcal{R}}}_{{\mathcal{M}}}} & {{\mathcal{R}}}_{{\mathcal{M}}}
 \end{array}\right),
 \end{equation*}
 where
 \begin{equation*}
 \widetilde{{{\mathcal{R}}}_{{\mathcal{M}}}}=\left(\begin{array}{ccccc}
  \textbf{0}                                          &  {{\mathcal{R}}}^{{\mathcal{M}}}_{1,M_x-1} & {{\mathcal{R}}}^{{\mathcal{M}}}_{1,M_x-2}    &\cdots   &{{\mathcal{R}}}^{{\mathcal{M}}}_{1,2}\\[2mm]
{{\mathcal{R}}}^{{\mathcal{M}}}_{1,M_x-1}    & \textbf{0 }                                       &  {{\mathcal{R}}}^{{\mathcal{M}}}_{1,M_x-1}    &\ddots   &\vdots \\[2mm]
{{\mathcal{R}}}^{{\mathcal{M}}}_{1,M_x-2}    &{{\mathcal{R}}}^{{\mathcal{M}}}_{1,M_x-1}    &   \textbf{0 }                                        &\ddots      &{{\mathcal{R}}}^{{\mathcal{M}}}_{1,M_x-2}\\[2mm]
 \vdots                                  &\ddots                                    &\ddots                                       &\ddots      &{{\mathcal{R}}}^{{\mathcal{M}}}_{1,M_x-1}\\[2mm]
{{\mathcal{R}}}^{{\mathcal{M}}}_{1,2}         &\cdots           &{{\mathcal{R}}}^{{\mathcal{M}}}_{1,M_x-2}              &{{\mathcal{R}}}^{{\mathcal{M}}}_{1,M_x-1}      &\textbf{0}
  \end{array}\right),
  \end{equation*}
 with $\textbf{0}$ denotes the $9M_y$-by-$9M_y$ zero matrix. Let $\textbf{c}$ be the first column vector of the matrix $\textbf{C}_{{\mathcal{M}}}$. Let $F_{2\left(M_x-1\right)}\otimes F_{9M_y}$ be the two-dimensional discrete Fourier transform matrix. Then the matrix $\textbf{C}_{{\mathcal{M}}}$ has the following  diagonalization
  \begin{equation*}
  \begin{split}
  \textbf{C}_{{\mathcal{M}}}
  =\left(F_{2\left(M_x-1\right)}\otimes F_{9M_y}\right)^{-1}diag\left(\left(F_{2\left(M_x-1\right)}\otimes F_{9M_y}\right)\textbf{c}\right)\left(F_{2\left(M_x-1\right)}\otimes F_{9M_y}\right).
  \end{split}
   \end{equation*}
   That means we can compute ${\mathcal{M}}{U}_L^T$ by two-dimensional FFT, i.e., computing with the order fft2 and ifft2 by MATLAB. The algorithm also can be used to compute ${\mathcal{Q}}{U}_R^T$, ${\mathcal{P}}{U}_L^T$ and ${\mathcal{N}}{U}_R^T$ fast and efficiently.
Based on algorithm above,   we  use Algorithm \ref{cgs3}
 to solve the steady-state nonlocal problems  \eqref{2.17}
and  Algorithm \ref{cgs2} to solve the time-dependent  nonlocal problems  \eqref{2.20}.

\begin{remark}
To compute two-dimensional matrix-vector multiplication  $\left(G_x\otimes G_y\right){U}$ with multiplicative Cauchy kernel,  by Algorithm \ref{mat-vec2}, we can  reduce  the computational complexity  $\mathcal{O}\left(M_xM_y\log M_xM_y\right)$ to $\mathcal{O}\left(M_xM_y \left(\log M_x+\log M_y\right)\right)$.
Moreover, for   block-Toeplitz Toeplitz-block-like algebraic system in \eqref{2.17}, it only  needs
  $\mathcal{O}\left(M_xM_y\log M_xM_y\right)$.
\end{remark}

\section{Numerical results}
In this section, we numerically verify the above theoretical results including convergence rates and numerical stability. And the $l_\infty$ norm is used to measure the numerical errors.
 \subsection{Numerical results for 1D}
Consider one-dimensional time-dependent  nonlocal problem of \eqref{1.1}
with a finite domain $0=a<x<b=1$ and $t\in (0,1]$.
The source function is   easy to explicitly compute.

\begin{table}[!th]
\vspace{-2mm}
\renewcommand{\captionfont}{\footnotesize}
\centering
\caption{FCGS to solve  Crank-Nicolson  scheme in \eqref{2.4} with $\tau=h=(b-a)/M$. The exact solution is  $u(x,t)=e^t\left(x^2(b-x)^2+e^{-2}\right)$}
\resizebox{\textwidth}{!}{
\begin{tabular}{ccccc|cccc|cccc}
\hline\noalign{\smallskip}
\multirow{2}*{$M$}&  \multicolumn{4}{c}{$\gamma=0.2$}          &\multicolumn{4}{c}{$\gamma=0.5$}                &\multicolumn{4}{c}{$\gamma=0.8$}\\
                          \cline{2-13}\noalign{\smallskip}
                  &Error               &Rate         &CPU        &Iter           &Error              &Rate          &CPU         &Iter    &Error              &Rate          &CPU         &Iter\\ \hline
 $2^{7}$     &1.1223e-06      &~~~~      &0.3812s   &3              &1.1728e-06     &~~~~       &0.3610s     &3       &1.2235e-06     &~~~~       &0.4051s     &4\\
 $2^{8}$     &2.7995e-07      &2.0033     &0.8702s    &3              &2.9229e-07      &2.0045       &0.8240s    &3      &3.0432e-07      &2.0074       &0.8218s    &3\\
 $2^{9}$     &6.9907e-08      &2.0017     &2.1701s    &3              &7.2958e-08      &2.0023       &2.1487s    &3      &7.5887e-08      &2.0037       &2.1628s    &3 \\
 $2^{10}$   &1.7467e-08      &2.0008     &4.4068s    &2              &1.8225e-08      &2.0012       &5.0112s    &3       &1.8964e-08      &2.0006       &5.0554s    &3 \\\hline
\end{tabular}
}
\label{TT01}
\end{table}

\begin{table}[!th]
\vspace{-2mm}
\renewcommand{\captionfont}{\footnotesize}
\centering
\caption{FCGS to solve BDF4 scheme in \eqref{2.21} with $\tau=h=(b-a)/M$. The exact solution is  $u(x,t)=e^t\left(x^2(b-x)^2+e^{-2}\right)$}
\resizebox{\textwidth}{!}{
\begin{tabular}{ccccc|cccc|cccc}\hline\noalign{\smallskip}
\multirow{2}*{$M$}&  \multicolumn{4}{c}{$\gamma=0.2$}      &\multicolumn{4}{c}{$\gamma=0.5$}     &\multicolumn{4}{c}{$\gamma=0.8$} \\
                          \cline{2-13}\noalign{\smallskip}
                 &Error             &Rate          &CPU         &Iter       &Error           &Rate       &CPU      &Iter    &Error           &Rate       &CPU      &Iter \\\hline
  $2^5$      &6.9518e-08    &~~~~       &0.1657s    &3          &1.2045e-07  &~~~~    &0.1723   &4       &2.3806e-07  &~~~~    &0.1875s   &4     \\
  $2^6$      &4.9176e-09    &3.8214       &0.3678s    &3          &1.0789e-08  &3.5232    &0.3752   &3       &2.6632e-08  &3.1601    &0.3767s   &4      \\
  $2^7$      &3.4026e-10    &3.8533       &0.9078s    &3          &9.2911e-10  &3.5376    &0.8966   &3       &2.8910e-09  &3.2035    &0.9260s   &4        \\
  $2^8$      &2.3611e-11    &3.8491       &2.2222s    &3          &7.9967e-11  &3.5384    &2.2995   &3       &3.0890e-10  &3.2263    &2.2558s   &3          \\\hline
\end{tabular}
}
\label{TT02}
\end{table}

Tables  \ref{TT01} and \ref{TT02} show that   Crank-Niclson  scheme in \eqref{2.4} has the global  convergence rate  $\mathcal {O}\left(\tau^2+h^{4-\gamma}\right)$ and the computational cost is of
 $\mathcal{O}\left(M\log(M)\right)$ operations.
 \subsection{Numerical results for 2D with multiplicative Cauchy kernel}
Consider two-dimensional  nonlocal problem \eqref{2.12}
with a finite domain $0=a<x,y<b=2$ and $t\in (0,2]$. The source function is easy to explicitly compute.

\begin{table}[!th]
\vspace{-2mm}
\renewcommand{\captionfont}{\footnotesize}
\centering
\caption{FCGS to solve  Crank-Nicolson  scheme in \eqref{2.13} with $\tau=h_x=(b-a)/M_x$, $M_y=M_x$.
The exact solution is
$u(x,y,t)=e^t\left(x^2(b-x)^2y^2(b-y)^2-\sin(1)\right)$}
\resizebox{\textwidth}{!}{
\begin{tabular}{ccccc|cccc|cccc}
\hline\noalign{\smallskip}
\multirow{2}*{$M_x$}&  \multicolumn{4}{c}{$\gamma=0.2$}        &\multicolumn{4}{c}{$\gamma=0.5$}                   &\multicolumn{4}{c}{$\gamma=0.8$}\\
                          \cline{2-13}\noalign{\smallskip}
                  &Error               &Rate         &CPU         &Iter        &Error              &Rate          &CPU          &Iter      &Error                &Rate          &CPU          &Iter \\ \hline
 $2^{3}$     &2.1016e-02      &~~~~      &0.1530s    &8           &2.1562e-02      &~~~~        &0.2281s     &12      &2.2528e-02       &~~~~        &0.2887s     &17    \\
 $2^{4}$     &5.5269e-03      &1.9269     &0.5458s     &7           &5.6003e-03      &1.9449       &0.7042s      &9       &5.7243e-03       &1.9765       &1.3012s      &15     \\
 $2^{5}$     &1.3985e-03      &1.9826     &2.4249s     &6           &1.4106e-03      &1.9892       &2.9071s      &7       &1.4242e-03       &2.0069       &3.9693s      &12      \\
 $2^{6}$   &3.5060e-04      &1.9959     &11.4087s    &5           &3.5334e-04      &1.9971       &13.0410s    &6       &3.5620e-04       &1.9994       &17.6337s     &9         \\\hline
\end{tabular}
}
\label{TT03}
\end{table}

\begin{table}[!th]
\vspace{-2mm}
\renewcommand{\captionfont}{\footnotesize}
\centering
\caption{FCGS to solve BDF4 scheme in \eqref{2.21} with $\tau=h_x=(b-a)/M_x$, $M_y=M_x$.
The exact solution is
$u(x,y,t)=e^t\left(x^2(b-x)^2y^2(b-y)^2-\sin(1)\right)$}
\resizebox{\textwidth}{!}{
\begin{tabular}{ccccc|cccc|cccc}\hline\noalign{\smallskip}
\multirow{2}*{$M_x$}&  \multicolumn{4}{c}{$\gamma=0.2$}      &\multicolumn{4}{c}{$\gamma=0.5$}          &\multicolumn{4}{c}{$\gamma=0.8$}\\
                          \cline{2-13}\noalign{\smallskip}
                 &Error             &Rate          &CPU         &Iter       &Error           &Rate       &CPU        &Iter       &Error           &Rate       &CPU        &Iter \\\hline
  $2^3$      &3.0818e-03    &~~~~       &0.3900s    &6         &2.6856e-03  &~~~~    &0.4190s    &8        &2.9844e-03  &~~~~    &0.5832s    &12   \\
  $2^4$      &2.8489e-04    &3.4353       &1.4615s    &5          &2.7296e-04  &3.2985    &1.5581s    &6         &2.1386e-04  &3.8027    &2.1692s    &11    \\
  $2^5$      &2.1244e-05    &3.7453       &6.7009s    &5         &2.2197e-05  &3.6203    &6.6590s    &5         &2.0220e-05  &3.4028    &8.8939s     &9      \\
  $2^6$      &1.4568e-06    &3.8662       &32.1854s    &4         &1.6769e-06  &3.7526   &32.9718s   &5         &1.9644e-06  &3.3636    &38.4941    &6          \\\hline
\end{tabular}
}
\label{TT04}
\end{table}

Tables  \ref{TT03} and \ref{TT04} show that  Crank-Niclson  scheme in \eqref{2.13} has the global  convergence rate
 $\mathcal {O}\left(\tau^2+h_x^{4-\gamma}+h_y^{4-\gamma}\right)$ and the computational complexity is
 $\mathcal{O}\left(M_xM_y \left(\log M_x+\log M_y\right)\right)$.
 \subsection{Numerical results for 2D with additive Cauchy kernel}
Consider 2D time-dependent nonlocal problem  \eqref{2.19}
with a finite domain $0=a<x,y<b=1$. The source function is computed by Gauss quadrature.
\begin{table}[!th]
\vspace{-2mm}
\renewcommand{\captionfont}{\footnotesize}
\centering
\caption{FCGS to solve  Crank-Nicolson  scheme in \eqref{2.20} with  $\tau=1/1000$ and $h_x=(b-a)/M_x$, $M_y=M_x$¡£
The exact solution is  $u(x,y,t)=e^t\left( e^{(2x+4y)}\left(\sin(2x)+\cos(4y)\right)+1\right)$}
\resizebox{\textwidth}{!}{
\begin{tabular}{ccccc|cccc|cccc}
\hline\noalign{\smallskip}
\multirow{2}*{$M_x$}&  \multicolumn{4}{c}{$\gamma=0.2$}        &\multicolumn{4}{c}{$\gamma=0.5$}                   &\multicolumn{4}{c}{$\gamma=0.8$}\\
                          \cline{2-13}\noalign{\smallskip}
                  &Error               &Rate         &CPU         &Iter        &Error               &Rate           &CPU          &Iter      &Error             &Rate          &CPU          &Iter \\ \hline
 $2^{1}$     &1.0639e-01      &~~~~      &0.3172s    &3           &1.6147e-01      &~~~~        &0.3241s     &3      &2.5036e-01       &~~~~        &0.3181s     &3    \\
 $2^{2}$     &7.7522e-03      &3.7786     &1.9283s     &3           &1.3699e-02      &3.5592       &1.9610s      &3       &2.4766e-02       &3.3376       &2.0489s      &3    \\
 $2^{3}$     &5.5544e-04      &3.8029     &9.6247s     &3           &1.1571e-03      &3.5654       &9.8243s      &3       &2.4233e-03       &3.3533       &10.2416s     &3      \\
 $2^{4}$     &3.8127e-05     &3.8648     &45.3002s    &3           &9.4740e-05      &3.6104       &45.1551s    &3       &2.3380e-04       &3.3737       &45.6625s     &3        \\\hline
\end{tabular}
}
\label{TT05}
\end{table}

\begin{table}[!th]
\vspace{-2mm}
\renewcommand{\captionfont}{\footnotesize}
\centering
\caption{FCGS to solve Crank-Nicolson  scheme in \eqref{2.20} with $\tau=h_x=(b-a)/M_x$, $M_y=M_x.$
The exact solution is  $u(x,y,t)=e^t\left( x^4-x^3+x^2+1\right)\left(y^4-2y^3+y^2+1\right)$ }
\resizebox{\textwidth}{!}{
\begin{tabular}{ccccc|cccc|cccc}\hline\noalign{\smallskip}
\multirow{2}*{$M_x$}&  \multicolumn{4}{c}{$\gamma=0.2$}      &\multicolumn{4}{c}{$\gamma=0.5$}          &\multicolumn{4}{c}{$\gamma=0.8$}\\
                          \cline{2-13}\noalign{\smallskip}
                 &Error             &Rate          &CPU         &Iter       &Error           &Rate       &CPU        &Iter       &Error           &Rate       &CPU        &Iter \\\hline
  $2^3$      &3.7979e-03    &~~~~       &0.1996s    &4         &4.0249e-03  &~~~~    &0.2377s    &5        &4.3137e-03  &~~~~    &0.2669s    &6   \\
  $2^4$      &9.7724e-04    &1.9584       &0.9261s    &4          &1.0274e-03  &1.9699    &0.7341s    &4         &1.0901e-03  &1.9845    &1.0143s    &5    \\
  $2^5$      &2.4792e-04    &1.9788       &3.6859s    &4         &2.5942e-04  &1.9857    &3.8594s    &4         &2.7337e-04  &1.9955    &3.8412s     &4      \\
  $2^6$      &6.2440e-05    &1.9893       &12.8309s    &3         &6.5160e-05  &1.9932   &16.1971s   &4         &6.8373e-05  &1.9993    &16.0575    &4          \\\hline
\end{tabular}
}
\label{TT06}
\end{table}

Tables  \ref{TT05} and \ref{TT06}  show that  Crank-Niclson  scheme in \eqref{2.20} has the global  convergence rate
 $\mathcal {O}\left(\tau^2+h_x^{4-\gamma}+h_y^{4-\gamma}\right)$ and the computational cost is almost
 $\mathcal{O}\left(M_xM_y\log(M_yM_x)\right)$.

\section{Conclusion}
In this work, the  nonsymmetric indefinite  systems including rectangular matrices are arising from two-dimensional time-dependent nonlocal   problems.
For one-dimensional steady state nonlocal problems of \eqref{1.1}, a sharp error estimates  has been proved in \cite{CQSW:19}, but it is not
 easy to be extended to multidimensional cases.
This paper  provides  rigorous theoretical analysis  for two-dimensional steady state nonlocal problems  with multiplicative Cauchy kernel and a few technical analysis for additive Cauchy kernel. Moreover,   it reveals the supconvergence results for  time-dependent  nonlocal problems of \eqref{1.1} including two-dimensional cases.
In further, we develop the FCGS to solve the two  different algebraic systems: Kronecker product  and  block-Toeplitz Toeplitz-block-like algebraic system.
We remark that the error estimates in \cite{DHZZ:18} and \cite{TWW:13} can be obtained by following the idea given in this paper.
\begin{appendix}
\section*{Appendix A}
The matrix  ${\mathcal{G}}$ in \eqref{2.17} consists of four block-structured matrices with Toeplitz-like blocks.
Here the block-Toeplitz properties of  ${\mathcal{Q}}_{\left(M_x-1\right)\times M_x }$, ${\mathcal{P}}_{ M_x \times\left(M_x-1\right)}$ and ${\mathcal{N}}_{ M_x \times  M_x }$ are expressed  following:
\begin{equation*}
{\mathcal{Q}}=\left(\begin{array}{ccccccc}
{\mathcal{Q}}_{1,\frac{1}{2}}         & {\mathcal{Q}}_{1,\frac{3}{2}}        &{\mathcal{Q}}_{1,\frac{5}{2}}        &{\mathcal{Q}}_{1,\frac{7}{2}} & \cdots &{\mathcal{Q}}_{1,M_x-\frac{3}{2}} &{\mathcal{Q}}_{1,M_x-\frac{1}{2}}\\[2mm]
{\mathcal{Q}}_{2,\frac{1}{2}}         & {\mathcal{Q}}_{1,\frac{1}{2}}        & {\mathcal{Q}}_{1,\frac{3}{2}}       &{\mathcal{Q}}_{1,\frac{5}{2}} & \ddots &\ddots &{\mathcal{Q}}_{1,M_x-\frac{3}{2}}\\[2mm]
{\mathcal{Q}}_{3,\frac{1}{2}}         & {\mathcal{Q}}_{2,\frac{1}{2}}        & {\mathcal{Q}}_{1,\frac{1}{2}}       & {\mathcal{Q}}_{1,\frac{3}{2}}& \ddots &\ddots &\vdots\\[2mm]
\vdots                                      & \ddots                                      & \ddots                                   & \ddots                              & \ddots &\ddots                                     &{\mathcal{Q}}_{1,\frac{7}{2}}\\[2mm]
{\mathcal{Q}}_{M_x-2,\frac{1}{2}}   & \ddots                                     & \ddots                                    & \ddots                             & \ddots   &{\mathcal{Q}}_{1,\frac{3}{2}}        &{\mathcal{Q}}_{1,\frac{5}{2}}\\[2mm]
{\mathcal{Q}}_{M_x-1,\frac{1}{2}}   & {\mathcal{Q}}_{M_x-2,\frac{1}{2}} & \cdots                                    & {\mathcal{Q}}_{3,\frac{1}{2}} & {\mathcal{Q}}_{2,\frac{1}{2}}          &{\mathcal{Q}}_{1,\frac{1}{2}}        &{\mathcal{Q}}_{1,\frac{3}{2}}
\end{array}\right)_{\left(M_x-1\right)\times M_x };
\end{equation*}
and
 \begin{equation*}
{\mathcal{P}}=\left(\begin{array}{cccccc}
{\mathcal{P}}_{\frac{1}{2},1} & {\mathcal{P}}_{\frac{1}{2},2} & {\mathcal{P}}_{\frac{1}{2},3}& \cdots &{\mathcal{P}}_{\frac{1}{2},M_x-2} &{\mathcal{P}}_{\frac{1}{2},M_x-1}\\[2mm]
{\mathcal{P}}_{\frac{3}{2},1} & {\mathcal{P}}_{\frac{1}{2},1} & {\mathcal{P}}_{\frac{1}{2},2}& \ddots &\ddots &{\mathcal{P}}_{\frac{1}{2},M_x-2}\\[2mm]
{\mathcal{P}}_{\frac{5}{2},1} & {\mathcal{P}}_{\frac{3}{2},1} & {\mathcal{P}}_{\frac{1}{2},1}& \ddots &\ddots &\vdots\\[2mm]
{\mathcal{P}}_{\frac{7}{2},1} & {\mathcal{P}}_{\frac{5}{2},1} & {\mathcal{P}}_{\frac{3}{2},1}& \ddots &\ddots &{\mathcal{P}}_{\frac{1}{2},3}\\[2mm]
\vdots                              & \ddots                               & \ddots                             &\ddots &\ddots &{\mathcal{P}}_{\frac{1}{2},2}\\[2mm]
{\mathcal{P}}_{M_x-\frac{3}{2},1} & \ddots                        & \ddots                            & \ddots  &{\mathcal{P}}_{\frac{3}{2},1} &{\mathcal{P}}_{\frac{1}{2},1}\\[2mm]
{\mathcal{P}}_{M_x-\frac{1}{2},1} &{\mathcal{P}}_{M_x-\frac{3}{2},1} & \cdots &{\mathcal{P}}_{\frac{7}{2},1} &{\mathcal{P}}_{\frac{5}{2},1} &{\mathcal{P}}_{\frac{3}{2},1}
\end{array}\right)_{ M_x \times\left(M_x-1\right)};
\end{equation*}
and
 \begin{equation*}
{\mathcal{N}}=\left(\begin{array}{ccccc}
{\mathcal{N}}_{\frac{1}{2},\frac{1}{2}} & {\mathcal{N}}_{\frac{1}{2},\frac{3}{2}} & {\mathcal{N}}_{\frac{1}{2},\frac{5}{2}}& \cdots &{\mathcal{N}}_{\frac{1}{2},M_x-\frac{1}{2}}\\[2mm]
{\mathcal{N}}_{\frac{3}{2},\frac{1}{2}} & {\mathcal{N}}_{\frac{1}{2},\frac{1}{2}} & {\mathcal{N}}_{\frac{1}{2},\frac{3}{2}}& \ddots &\vdots\\[2mm]
{\mathcal{N}}_{\frac{5}{2},\frac{1}{2}} & {\mathcal{N}}_{\frac{3}{2},\frac{1}{2}} & {\mathcal{N}}_{\frac{1}{2},\frac{1}{2}}& \ddots &{\mathcal{N}}_{\frac{1}{2},\frac{5}{2}}\\[2mm]
\vdots                                            & \ddots                                             & \ddots                                           &\ddots   &{\mathcal{N}}_{\frac{1}{2},\frac{3}{2}}\\[2mm]
{\mathcal{N}}_{M_x-\frac{1}{2},\frac{1}{2}} & \cdots                                       & {\mathcal{N}}_{\frac{5}{2},\frac{1}{2}}& {\mathcal{N}}_{\frac{3}{2},\frac{1}{2}} &{\mathcal{N}}_{\frac{1}{2},\frac{1}{2}}
\end{array}\right)_{ M_x \times M_x}.
\end{equation*}
It should be noted that the above matrices ${\mathcal{Q}}$ and ${\mathcal{P}}$ have the similar structure properties with $\mathcal{Q}$ and $\mathcal{P}$ in \eqref{2.3}.

\section*{Appendix B}
\begin{algorithm}[!htp]
\caption{Fast Conjugate Gradient Squared for \eqref{2.3} }
\label{cgs1}
\begin{algorithmic}[1]
\STATE   Residual
$\left[\begin{array}{l} r_w^0 \\ r_v^0 \end{array}\right]=\left[\begin{array}{l} F_w  \\ F_v  \end{array}\right]-
\left[\begin{array}{cc}\mathcal {D}_1-\mathcal{M} & -\mathcal{Q} \\ -\mathcal{P} & \mathcal {D}_2-\mathcal{N}\end{array}\right]\left[\begin{array}{l} w \\ v \end{array}\right]$;
$w=\bf 0$, $v=\bf 0$.\\[1.5mm]
\STATE $ r_w^*=r_w^0$, $ r_v^*=r_v^0$; Set $p_w:=z_w:=r_w^0, p_v:=z_v:=r_v^0$;\\[1.5mm]
\STATE {\bf While} residual$>$tolerance $\&$ $j<{\rm maxit}$ {\bf do}:\\[1.5mm]
\STATE \qquad $\left[\begin{array}{l} \mathcal{A}p_w \\ \mathcal{A}p_v \end{array}\right]=
\left[\begin{array}{cc}\mathcal {D}_1-\mathcal{M} & -\mathcal{Q} \\ -\mathcal{P} & \mathcal {D}_2-\mathcal{N}\end{array}\right]\left[\begin{array}{l} p_w \\ p_v \end{array}\right]$\\[1.5mm]
\STATE \qquad $\alpha=\frac{\left([r_w^j;r_v^j],[r_w^*;r_v^*]\right)}{\left([\mathcal{A}p_w;\mathcal{A}p_v],[r_w^*;r_v^*]\right)}$\\[1.5mm]
\STATE \qquad $q_w=z_w-\alpha\mathcal{A}p_w$, $q_v=z_v-\alpha\mathcal{A}p_v$\\[1.5mm]
\STATE \qquad $w=w+\alpha\left(z_w+q_w\right)$,~~$v=v+\alpha\left(z_v+q_v\right)$\\[1.5mm]
\STATE \qquad
 Residual $\left[\begin{array}{l} r_w^{j+1} \\ r_v^{j+1} \end{array}\right]=\left[\begin{array}{l} r_w^j \\ r_v^j \end{array}\right]-
\left[\begin{array}{cc}\mathcal {D}_1-\mathcal{M} & -\mathcal{Q} \\ -\mathcal{P} & \mathcal {D}_2-\mathcal{N}\end{array}\right]\left[\begin{array}{l} z_w+q_w \\ z_v+q_v \end{array}\right]$\\[1.5mm]
\STATE  \qquad  $\beta =\frac{\left([r_w^{j+1};r_v^{j+1}],[r_w^*;r_v^*]\right)}{\left([r_w^j;r_v^j],[r_w^*;r_v^*]\right)}$\\[1.5mm]
\STATE \qquad $z_w=r_w^{j+1}+\beta q_w$, $z_v=r_v^{j+1}+\beta q_v$\\[1.5mm]
\STATE \qquad  $p_w=z_w+\beta \left(q_w+\beta p_w\right)$, $p_v=z_v+\beta \left(q_v+\beta p_v\right)$\\[1.5mm]
\STATE  {\bf Endwhile}\\[1.5mm]
\STATE  Return $w$, $v$
\end{algorithmic}
\end{algorithm}
\begin{algorithm}[!h]
\caption{Fast Conjugate Gradient Squared for time-dependent problems}
\label {cgs2}
\begin{algorithmic}[1]
\STATE $t:=0$
\STATE {\bf While} {$t<T$} {\bf Do}:
\STATE $t:=t+\tau$
\STATE solve  time-dependent problems   by Algorithm \ref{cgs1} for 1D or Algorithm \ref{cgs3} for 2D
\STATE {\bf EndWhile}
\end{algorithmic}
\end{algorithm}
\begin{algorithm}[!htp]
\caption{Fast Fourier transform algorithm for $\left(\mathcal{G}_x\otimes \mathcal{G}_y\right){U}$ }
\label{mat-vec2}
\begin{algorithmic}[1]
\STATE  {\bf For} $i=1,2,\cdots, M_x-1$ {\bf Do}:
\STATE  ~~~~Compute $V_i=\mathcal{G}_y U^T_{i}$ by FFT;
\STATE  {\bf EndDo}
\STATE  {\bf For} $i=M_x,M_x+1,\cdots, 2M_x-1$ {\bf Do}:
\STATE  ~~~~Compute $V_i=\mathcal{G}_y U^T_{\frac{i}{2}}$ by FFT;
\STATE  {\bf EndDo}
\STATE   Give the notation with row vectors of $V$, i.e.,\\
~~~~~$V=\left(\begin{array}{c}V_{1}\\V_{2}\\\vdots\\V_{j}\\\vdots\\V_{2M_y-1}\end{array}\right)$
with $V_{j}=\left(V_{j ,1},V_{j ,2},\cdots,V_{j ,2M_x-1}\right)$;
\STATE  {\bf For} $j=1,2,\cdots, 2M_y-1$ {\bf Do}:
\STATE  ~~~~Compute $W_j=\mathcal{G}_x V_{j}^T$ by FFT;
\STATE  {\bf EndDo}
\STATE  Give the notation with row vectors of $W$, i.e., \\
~~~~~$W=\left(\begin{array}{c}W_1\\W_2\\\vdots \\W_{i} \\\vdots \\W_{2M_x-1}\end{array}\right)$ with $W_{i}=\left(W_{i,1},W_{i,2},\cdots,W_{i,2M_y-1}\right)$;
\STATE  Return $\left(\mathcal{G}_x\otimes \mathcal{G}_y\right){U}=\left(W_1,W_2,\cdots ,W_{i},\cdots,W_{2M_x-1}\right)^T$.
\end{algorithmic}
\end{algorithm}

\begin{algorithm}[!h]
\caption{Fast Conjugate Gradient Squared for 2D steady equation}
\label{cgs3}
\begin{algorithmic}[1]
\STATE   Initialize:${U}=\bf 0$; ${R}^0= {F}-{{\mathcal{A}}}{U}$; $ {R}^*={R}^0$.
\STATE Set ${p}:={z}:={R}^0$;
\STATE {\bf While} residual$>$tolerance $\&$ $j<{\rm maxit}$ {\bf Do}:
\STATE \qquad ${{\mathcal{A}}}_p={{\mathcal{A}}}{p}$, Compute ${{\mathcal{A}}}{p}$ by referring to the Algorithm \ref{mat-vec2}
\STATE \qquad $\alpha=\frac{\left({R}^j,{R}^*\right)}{\left({{\mathcal{A}}}_p,{R}^*\right)}$
\STATE \qquad ${q}={z}-\alpha{{\mathcal{A}}}_p$
\STATE \qquad ${U}={U}+\alpha\left({z}+{q}\right)$
\STATE \qquad ${R}^{j+1}={R}^j -{{\mathcal{A}}}\left({z}+{q}\right)$, compute ${{\mathcal{A}}}\left({z}+{q}\right)$ by referring to the Algorithm \ref{mat-vec2}
\STATE  \qquad  $\beta =\frac{\left({R}^{j+1},{R}^*\right)}{\left({R}^j,{R}^*\right)}$
\STATE \qquad ${z}={R}^{j+1}+\beta {q}$
\STATE \qquad  ${p}={z}+\beta \left({q}+\beta {p}\right)$
\STATE  {\bf EndDo}
\STATE  Return ${U}$
\end{algorithmic}
\end{algorithm}
\end{appendix}


\begin{thebibliography}{99}

\bibitem{Andreu:10} {\sc  F. Andreu-Vaillo,   J. M. Maz\'{o}n, J. D. Rossi, and J. J.  Toledo-Melero},  {\em Nonlocal Diffusion Problems},   Math. Surveys Monogr. 165,  AMS, Providence, RI, 2010.

\bibitem{Aikinson:67} {\sc K. E. Atkinson}, {\em The numerical solution of Fredholm integral equations of the second kind}, SIAM J. Numer. Anal.,  {4}  (1967), pp. 337--348.

\bibitem{Aikinson:09} {\sc K. E. Atkinson}, {\em The Numerical Solution of Integral Equations of the Second Kind}, Cambridge University Press, 2009.

\bibitem{Bates:06} {\sc P.  Bates},  {\em On some nonlocal evolution equations arising in materials science},  In: {\sc H. Brunner, X. Zhao and  X. Zou} (eds.)
                   {\em Nonlinear Dynamics and Evolution Equations}, in  Fields Inst.  Commun.,   AMS, Providence, RI, (2006), pp. 13--52.

\bibitem{Chan:07} {\sc R. H. F. Chan and  X. Q. Jin}, {\em An Introduction to Interative Toeplitz Solvers}, SIAM,  Phildelphia,  2007.

\bibitem{ChanNg:96} {\sc R. H.  Chan and  M. K. Ng}, {\em Conjugate gradient methods for Toeplitz systems}, SIAM Rev. 38 (1996),  pp. 427--482.

\bibitem{Chen:0013} {\sc M. H. Chen and  W. H. Deng},  {\em Fourth order accurate scheme for the space fractional diffusion equations},
SIAM J. Numer. Anal.,  52  (2014),  pp. 1418--1438.

\bibitem{Chen:13}{\sc  M. H. Chen and    W. H. Deng},
{\em  Discretized fractional substantial calculus}, ESAIM: Math. Mod. Numer. Anal.,
{49} (2015), pp.  373-394.

\bibitem{CES:19} {\sc  M. H. Chen, S. E. Ekstr\"{o}m,    and S. Serra-Capizzano},  {\em A Multigrid method for nonlocal  problems: non-diagonally dominant  or Toeplitz-plus-tridiagonal systems}, SIAM J. Matrix Anal. Appl., (major revised) arXiv:1808.09595v1.

\bibitem{CD:17} {\sc  M. H. Chen and W. H. Deng}, {\em Convergence analysis of a multigrid method for a nonlocal model}, SIAM J. Matrix Anal. Appl.,  {38}  (2017), pp. 869--890.

\bibitem{CQSW:19} {\sc M. H. Chen, W. Y. Qi, J. K. Shi,  and J. M. Wu}, A sharp error estimation of piecewise ploynomial collocation mehtod for nonlocal problems with weakly singular kernels, arXiv:1909.10756.

\bibitem{CWCD:14}{\sc M. H. Chen, Y. T. Wang, X. Cheng, and  W. H. Deng},
{\em Second-order LOD multigrid method for multidimensional Riesz fractional diffusion equation},
 BIT, {54} (2014), pp. 623--647.

\bibitem{DHZZ:18}{\sc Q. Du, H. D. Han, J. W. Zhang, and C. X. Zheng},
{\em Numerical solution of a two-dimensional nonlocal wave equation on unbounded domains},
SIAM J. Sci. Comput., 40 (3) (2018), pp. A1430-A1445.

\bibitem{Du:12} {\sc Q. Du, M. Gunzburger, R.  Lehoucq, and  K. Zhou}, {\em Analysis and approximation of nonlocal diffusion problems with volume constraints}, SIAM Rev., {56} (2012), pp. 676--696.

\bibitem{DuW:15} {\sc N. Du and  H. Wang},  {\em A fast finite element method for space-fractional dispersion  equations on bounded domains in $\mathbb{R}^2$},
                  SIAM J. Sci. Comput., {37} (2015),  pp. A1614--A1635.


\bibitem{Hoog:73}{\sc  F. de Hoog and R. Weiss}, {\em Asymptotic expansions for product integration}, Math. Comput.,  {27}  (1973), pp. 295--306.

\bibitem{GFTJZ:18} {\sc Y. Gao, H. Feng, H. Tian, L. L.  Ju, and X. P.  Zhang},
{\em  Nodal-type Newton-Cotes rules for fractional hypersingular integrals}, E. Asian J. Appl. Math., {8} (2018), pp.  697--714.




\bibitem{LiSun:10} {\sc B. Y.  Li and W. W. Sun},
{\em  Newton-Cotes rules for Hadamard finite-part integrals on an interval}, IMA J. Numer. Anal., {30} (2010), pp.  1235--1255.


%
%
%


\bibitem{PNW:16} {\sc  J. Y. Pan, M. Ng, and H. Wang},   {\em Fast iterative solvers for linear systems arising from time-dependent space-fractional diffusion equations},
                  SIAM J. Sci. Comput.,  {38}  (2016),  pp. A2806-A2826.

\bibitem{Pang:12} {\sc  H. Pang and H. Sun}, {\em  Multigrid method for fractional diffusion equations}, J. Comput. Phys.,  {231} (2012),  pp. 693--703.

\bibitem{Quarteroni:07}
{\sc  A. Quarteroni,  R.  Sacco, and  F. Saleri},
 {\em Numerical Mathematics, 2nd ed},
 Springer, 2007.

\bibitem{Saad:03}  {\sc Y. Saad}, {\em   Iterative Methods for Sparse Linear Systems},  SIAM, Philadelphia, 2003.

\bibitem{Silling:00} {\sc S. A. Silling},
{\em  Reformulation of elasticity theory for discontinuities and long-range forces},  J. Mech. Phys. Solids, {48} (2000), pp. 175--209.

\bibitem{Sonneveld:89} {\sc P. Sonneveled},
{\em CGS, a fast Lanczos-type solver for nonsymmetric linear systems},
SIAM J.  SCI. Stat. Comput.,  {10} (1989), pp. 36--52.


\bibitem{TWW:13} {\sc  H. Tian, H. Wang, and W. Q. Wang},
{\em  An efficient collocation method for a non-local diffusion model}, Int. J. Numer. Anal. Model., {4} (2013), pp.  815--825.

\bibitem{Varga:00} {\sc R. S. Varga},
{\em  Matrix Iterative Analysis}, Springer, 2000.

\bibitem{Varah:75} {\sc J. M. Varah},
{\em  A lower bound for the smallest singular value of a matrix}, Linear Algebra Appl.,  {11} (1975), pp.  3--5.

\bibitem{Wang:12} {\sc H.  Wang and   T. Basu},  {\em A fast finite difference method for two-dimensional space-fractional diffusion equations},
                  SIAM J. Sci. Comput., {34} (2012),  pp. A2444--A2458.

\bibitem{WT:12} {\sc H. Wang and H. Tian},
{\em  A fast Galerkin method with efficient matrix assembly and storage for a peridynamic model}, J. Comput. Phys., {231} (2012), pp.  7730--7738.


\bibitem{WL:05} {\sc J. M.  Wu and Y. L\"{u}},
{\em  A superconvergence result for the second-order Newton-Cotes formula for certain finite-part integrals}, IMA J. Numer. Anal., {25} (2005), pp.  253--263.


\bibitem{ZhangWJ:18} {\sc X. P.  Zhang, J. M. Wu, and L. L. Ju},
{\em  An accurate and asymptotically compatible collocation scheme for nonlocal diffusion problems}, Appl. Numer. Math., {133} (2018), pp. 52--68.






\end{thebibliography}
\end{document}